\newtheorem{theorem}{Theorem}[section]
\newtheorem{lemma}[theorem]{Lemma}
\newtheorem{example}[theorem]{Example}
\newtheorem{proposition}[theorem]{Proposition}
\newtheorem{problem}[theorem]{Problem}
\newtheorem{corollary}[theorem]{Corollary}
\newtheorem{remark}[theorem]{Remark}
\newcommand{\N}{\mathbb N}
\newcommand{\R}{\mathbb R}
\newcommand{\on}{\operatorname}
\author{Szymon G\l \c ab}
\address{Institute of Mathematics, \L \'od\'z University of Technology,
W\'olcza\'nska 215, 93-005 \L \'od\'z, Poland}
\email {szymon.glab@p.lodz.pl}
\author{Jacek Marchwicki}
\address{
Department of Complex Analysis,
Faculty of Mathematics and Computer Science,
University of Warmia and Mazury in Olsztyn,
Słoneczna 54,
10-710 Olsztyn,
Poland}
\email {marchewajaclaw@gmail.com}
\title[Cardinal functions of purely atomic measures]{Cardinal functions of purely atomic measures}
\subjclass[2010]{Primary: 40A05 ; Secondary: 11K31} 
\keywords{purely atomic measure, achievement set, set of subsums, absolutely convergent series}
\begin{document}

\begin{abstract}
Let $\mu$ be a purely atomic measure. By $f_\mu:[0,\infty)\to\{0,1,2,\dots,\omega,\mathfrak{c}\}$ we denote its cardinal function $f_{\mu}(t)=\vert\{A\subset\N:\mu(A)=t\}\vert$. We study the problem for which sets $R\subset\{0,1,2,\dots,\omega,\mathfrak{c}\}$ there is a measure $\mu$ such that $R$ is the range of $f_\mu$. We are also interested in the set-theoretic and topological properties of the set of $\mu$-values which are obtained uniquely. 
\end{abstract}

\maketitle

\section{Introduction}

Let $\mu$ be a purely atomic finite measure. Following \cite{BGM} we may assume that $\mu$ is defined on $\N:=\{1,2,3,\dots\}$  and $\mu(\{k\})\geq \mu({\{k+1\}})>0$. Throughout the paper we assume that measures are always of this type. By the range of $\mu$ we understand the set $\text{rng}(\mu):=\{\mu(E):E\subset\N\}$. To simplify the notation let $x_n=\mu(\{n\})$ be a measure of the $n$-th largest atom of $\mu$. Note that 
\[
\text{rng}(\mu)=\Big\{\sum_{n=1}^\infty\varepsilon_nx_n:(\varepsilon_n)\in\{0,1\}^\N\Big\}.
\]
The latter set is also denoted by $A(x_n)$ and it is called the achievement set of $(x_n)$ (see \cite{Jones}). We can identify a measure $\mu$ with the function $\{0,1\}^\N\ni(\varepsilon_n)\mapsto\sum_{n=1}^\infty\varepsilon_nx_n$. This is the continuous mapping from the Cantor space $\{0,1\}^\N$ to the real line, and therefore $\mu^{-1}(t)$ is a closed subset of the Cantor space $\{0,1\}^\N$. Thus the cardinality $\vert\mu^{-1}(t)\vert$ belongs to the set $\{0,1,2,\dots,\omega,\mathfrak{c}\}$ where $\omega$ stands for the first infinite cardinal while $\mathfrak{c}$ for the continuum \cite[Section 6]{Kechris}. By $f_\mu$ we denote the cardinal function $f_{\mu}(t)=\vert\mu^{-1}(t)\vert=\vert\{A\subset\N:\mu(A)=t\}\vert$; we will write $f$ instead of $f_\mu$ if it is clear which measure $\mu$ or series $\sum_{n=1}^\infty x_n$ is considered. By $R(f_\mu)$ (or $R(f)$) we denote the range of the cardinal function $f_\mu$, that is the set of all cardinals $\vert\mu^{-1}(t)\vert$ where $t\in\text{rng}(\mu)$. Hence $R(f)\subset\{1,2,\dots,\omega,\mathfrak{c}\}$. For example, $n\in R(f_\mu)$ means that there are $n$ distinct sets $A_1,\dots,A_n\subset\N$ such that $\mu(A_i)=\mu(A_1)$ for $i=2,\dots,n$ and $\mu(A)\neq\mu(A_1)$ for any $A\in \mathcal{P}(\N)\setminus\{A_1,\dots,A_n\}$, where $\mathcal{P}(\N)$ stands for the family of all subsets of $\N$. If $f_\mu(t)=1$, then we say that $t$ is uniquely obtained, (or it has unique representation or unique expansion). The set $f_\mu^{-1}(1)$ will be called a set of uniqueness for $\mu$. By $Fin$ we denote the collection of all finite subsets of $\N$. Identifying subset of $\N$ with its characteristic functions, we consider on $\mathcal{P}(\N)$ the topology from the Cantor space $\{0,1\}^\N$. It turns out that $Fin$ is dense in $\mathcal{P}(\N)$, and therefore the set $\{\mu(A):A\in Fin\}$ is dense in $\on{rng}(\mu)$. We will use the notation $[1,n]:=\{1,2,\dots,n\}$. A subset of a complete metric space is called a Cantor set if it is non-empty compact and dense-in-itself. 

We are interested in the following problems: 
\begin{itemize}
\item for which subsets $R$ of $\{1,2,\dots,\omega,\mathfrak{c}\}$ there is a measure $\mu$ such that $R(f_\mu)=R$?
\item what are topological and set-theoretical properties of $f^{-1}_\mu(\alpha)$ for $\alpha\in\{1,2,\dots,\omega,\mathfrak{c}\}$? 
\end{itemize}
This kind of problems was considered by Bielas, Plewik and Walczy\'nska in \cite{BPW} for particular achievement set $\mathbf{T}$, prescribed below in Theorem \ref{char}, which is of a great importance in this field. The Authors showed that its cardinal function $f$ have only two values -- 1 and 2. Moreover $f^{-1}(i)$ has the cardinality continuum and is dense in $\mathbf{T}$ for $i=1,2$.

Achievement sets of sequences have been considered by many
authors; some results have been rediscovered several times. Let us list basic properties of $A(x_{n})$ (some of them discovered by Kakeya in \cite{Kakeya} in 1914):
\begin{theorem}\label{Kakeya}
Let $(x_n)$ be an absolutely summable sequence of real numbers. Then 
\begin{itemize}
\item[(i)] $A(x_n)$ is a compact perfect or finite set;

\item[(ii)] if $\vert x_{n}|>\sum_{i>n}|x_{i}|$ for all sufficiently large $n$'s,
then $A(x_{n})$ is homeomorphic to the Cantor set $C$;

\item[(iii)] if $|x_{n}|\leq \sum_{i>n}|x_{i}|$ for all sufficiently large $%
n $'s, then $A(x_{n})$ is a finite union of closed intervals. Moreover, if $%
|x_{n}|\geq |x_{n+1}|$ for all but finitely many $n$'s and $A(x_{n})$ is a
finite union of closed intervals, then $|x_{n}|\leq \sum_{i>n}|x_{i}|$ for
all but finitely many $n$'s.
\end{itemize}
\end{theorem}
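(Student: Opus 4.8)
The plan is to run everything through the coding map $\pi\colon\{0,1\}^\N\to\R$, $\pi((\varepsilon_n)):=\sum_n\varepsilon_n x_n$, after two harmless reductions. Since $\sum_n|x_n|<\infty$ the partial sums converge uniformly, so $\pi$ is continuous, and hence $A(x_n)=\pi(\{0,1\}^\N)$ is compact. Deleting the terms with $x_n=0$ leaves $A(x_n)$ unchanged, and writing $S^-:=\sum_{x_n<0}x_n$ one has $A(x_n)=S^-+A(|x_n|)$; as all three conclusions are invariant under translation and under deleting zero terms, I may assume $x_n>0$ for all $n$. Put $S:=\sum_i x_i$ and $r_n:=\sum_{i>n}x_i$. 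For (i): if only finitely many $x_n$ are nonzero then $A(x_n)$ is finite; otherwise fix $a=\sum_n\varepsilon_n x_n$ and $\delta>0$, pick $N$ with $r_N<\delta$ and any $m>N$, and flip the $m$-th coordinate of $(\varepsilon_n)$ — this yields a point of $A(x_n)$ at distance $x_m\le r_N<\delta$ from $a$ and different from it, so $A(x_n)$ has no isolated point.

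For (ii), assume $x_n>r_n$ for all $n\ge N$. The restriction of $\pi$ to $\{0,1\}^{\{N,N+1,\dots\}}$ is injective: if two $0$--$1$ strings first differ at a position $k\ge N$, then the difference of their $\pi$-values has absolute value at least $x_k-r_k>0$. A continuous injection from a compact space into a Hausdorff space is an embedding, so $A((x_n)_{n\ge N})\cong\{0,1\}^\N$, i.e.\ it is a Cantor set. Now $A(x_n)=\bigcup_{\varepsilon\in\{0,1\}^{N-1}}\bigl(\textstyle\sum_{n<N}\varepsilon_n x_n+A((x_n)_{n\ge N})\bigr)$ is a finite union of homeomorphic copies of a Cantor set, hence nonempty, compact, zero-dimensional (a finite union of closed zero-dimensional sets is zero-dimensional), and perfect by (i); by the topological characterization of the Cantor set it is homeomorphic to $\{0,1\}^\N$.

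The heart of (iii) is a greedy lemma: if $x_n\le r_n$ for all $n\ge N$, then $A((x_n)_{n\ge N})=[0,S_N]$ with $S_N:=\sum_{n\ge N}x_n$. Given $t\in[0,S_N]$ I would build $\varepsilon_N,\varepsilon_{N+1},\dots$ greedily, setting $\varepsilon_n=1$ exactly when the residue $t_n:=t-\sum_{N\le j<n}\varepsilon_j x_j$ satisfies $t_n\ge x_n$, and check by induction that $0\le t_n\le\sum_{j\ge n}x_j$ at every stage — the step for $\varepsilon_n=0$ is where $x_n\le r_n$ is used. Since $\sum_{j\ge n}x_j\to0$, this gives $\sum_{n\ge N}\varepsilon_n x_n=t$. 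Consequently $A(x_n)=\bigcup_{\varepsilon\in\{0,1\}^{N-1}}\bigl[\sum_{n<N}\varepsilon_n x_n,\ \sum_{n<N}\varepsilon_n x_n+S_N\bigr]$ is a finite union of closed intervals, which is the first assertion of (iii).

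For the converse in (iii) I would argue the contrapositive: if $x_n\ge x_{n+1}$ for all $n\ge M$ but $x_N>r_N$ for infinitely many $N$, then the complement of $A(x_n)$ has infinitely many components, so $A(x_n)$ is not a finite union of closed intervals. Let $B:=\{\sum_{n<M}\varepsilon_n x_n:\varepsilon\in\{0,1\}^{M-1}\}$, a finite set with $\max B=\sum_{n<M}x_n$, and let $\beta>0$ be the distance from $\max B$ to $B\setminus\{\max B\}$ (no condition if $M=1$). Consider the infinitely many $N\ge M$ with $x_N>r_N$ and, in addition, $x_N<\beta$ — only finitely many are lost since $x_N\to0$. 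Monotonicity forces every $a\in A(x_n)$ with $a>S-x_N$ to have $\varepsilon_n=1$ for all $M\le n\le N$, and then $x_N<\beta$ pushes such an $a$ up to $\ge\max B+\sum_{M\le n\le N}x_n$; hence the nonempty open interval $\bigl(S-x_N,\ \max B+\sum_{M\le n\le N}x_n\bigr)$ (nonempty because $x_N>r_N$) is disjoint from $A(x_n)$. Its right endpoint lies in $A(x_n)$, and a one-line check ($x_{N'}\le r_N$ for $N'>N$) shows that for $N<N'$ the corresponding gaps are separated by a point of $A(x_n)$ and that their right endpoints increase to $S=\max A(x_n)$, producing infinitely many complementary components near $S$. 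I expect parts (i), (ii) and the forward half of (iii) to be routine once the coding map and the reductions are set up; the genuinely delicate point is this converse, where the finitely many initial non-monotone terms must be absorbed into the finite translate set $B$ and one must ensure the gaps manufactured for distinct $N$ are not filled in by some other translate — which is exactly what the conditions $x_N<\beta$ and $x_{N'}\le r_N$ guarantee.
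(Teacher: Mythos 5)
The paper does not actually prove this theorem: it is quoted as classical background (Kakeya's 1914 result together with the interval/Cantor dichotomy as presented in the surveys \cite{BFPW}, \cite{N1}, \cite{N2}), so there is no in-paper argument to compare yours against. Judged on its own, your proof is correct and essentially the standard one. The reductions (dropping zero terms, translating by $\sum_{x_n<0}x_n$ to pass to $|x_n|$ -- the same observation the paper itself makes at the start of Section 2), the continuity/compactness of the coding map for (i) and the coordinate-flipping argument for perfectness, the injectivity-plus-Brouwer argument for (ii), and the greedy algorithm giving $A((x_n)_{n\ge N})=[0,S_N]$ for the forward half of (iii) are all sound. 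The converse half of (iii) is, as you say, the only delicate point, and your gap construction works: for each $N\ge M$ with $x_N>r_N$ and $x_N<\beta$, monotonicity forces any representation of a point $a>S-x_N$ to use all of $x_M,\dots,x_N$, and the choice of $\beta$ rules out the translates coming from $B\setminus\{\max B\}$ (for those one gets $a\le \max B+\sum_{M\le n\le N}x_n+r_N-\beta<S-x_N$, a contradiction), so $(S-x_N,\sum_{n\le N}x_n)$ really is a gap; the right endpoints $\sum_{n\le N}x_n$ lie in $A(x_n)$, increase to $S$, and separate the gaps for distinct admissible $N$ because $x_{N'}\le r_N$ gives $\sum_{n\le N}x_n\le S-x_{N'}$. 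Two cosmetic points you may wish to make explicit: in (ii) the hypothesis $|x_n|>\sum_{i>n}|x_i|$ already forces infinitely many nonzero terms (otherwise one would need $0>0$), which is what licenses the appeal to (i) for perfectness; and instead of invoking the sum theorem for zero-dimensionality you can simply note that a finite union of nowhere dense compact sets contains no interval, hence is totally disconnected.
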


One can see that $A(x_{n})$ is finite if and only if $x_{n}=0$ for all but
finite number of $n$'s, i.e. $(x_{n})\in c_{00}$. Kakeya conjectured that if 
$(x_{n})\in \ell _{1}\setminus c_{00}$, then $A(x_{n})$ is always
homeomorphic to the Cantor set $C$ or it is a finite union of intervals.
 In \cite{GN} Guthrie and Nymann gave a very simple
example of a sequence whose achievement set is not a finite union of closed
intervals but it has nonempty interior. They used the following sequence $%
(t_{n})=(\frac{3}{4},\frac{2}{4},\frac{3}{16},\frac{2}{16},\ldots )$.
Moreover, they formulated the following:

\begin{theorem}
\label{char} For any $(x_n) \in \ell_1 \setminus c_{00}$, the set $A(x_n)$
is one of the following types:

\begin{itemize}
\item[(i)] a finite union of closed intervals,

\item[(ii)] homeomorphic to the Cantor set $C$,

\item[(iii)] homeomorphic to the set $\mathbf{T}=A(t_n)=A(\frac{3}{4}, \frac{%
2}{4}, \frac{3}{16}, \frac{2}{16}, \frac{3}{64}, \ldots)$.
\end{itemize}
\end{theorem}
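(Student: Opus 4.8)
The plan is to split into three cases governed by the behaviour of the tails $r_n:=\sum_{i>n}x_i$, settle two of them directly from Theorem~\ref{Kakeya}, and concentrate the real work on the third. First I would reduce to the standing assumptions of the paper. Deleting the zero terms of $(x_n)$ does not change $A(x_n)$; replacing each negative $x_n$ by $|x_n|$ only translates $A(x_n)$ by the constant $-\sum_{x_n<0}|x_n|$ (group the summands of $\sum\varepsilon_n x_n$ by sign and substitute $\varepsilon_n\mapsto 1-\varepsilon_n$ on the negative indices); rearranging the terms into non-increasing order does not change $A(x_n)$ at all; and the reflection $t\mapsto\bigl(\sum_n x_n\bigr)-t$ is a homeomorphism. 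So we may assume $x_1\ge x_2\ge\cdots>0$, the setting of the paper. For $m\ge 0$ put $A_m:=A\bigl((x_i)_{i>m}\bigr)$, so $A_m\subseteq[0,r_m]$, $0,r_m\in A_m$, $A_{m-1}=A_m\cup(x_m+A_m)$, and $A_0=A(x_n)$. Let $J:=\{n\in\N:x_n>r_n\}$.

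Now the case split. If $J$ is cofinite, then $x_n>\sum_{i>n}x_i$ for all large $n$, and Theorem~\ref{Kakeya}(ii) gives $A(x_n)\cong C$: type (ii). If $\N\setminus J$ is cofinite, then $x_n\le\sum_{i>n}x_i$ for all large $n$, and Theorem~\ref{Kakeya}(iii) gives that $A(x_n)$ is a finite union of closed intervals: type (i). This leaves the \emph{mixed case}, in which both $J$ and $\N\setminus J$ are infinite, and we must prove $A(x_n)\cong\mathbf{T}$. (A short computation shows $t_{2k-1}\le r_{2k-1}$ and $t_{2k}>r_{2k}$ for every $k$, so $\mathbf{T}=A(t_n)$ itself lies in the mixed case, consistent with Guthrie and Nymann's observation that $\mathbf{T}$ is neither a finite union of intervals nor a Cantor set.)

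So assume we are in the mixed case and write $A:=A(x_n)$. I would first show that $A$ is a Cantorval. It is compact, and perfect by Theorem~\ref{Kakeya}(i) since $(x_n)\notin c_{00}$. It is not a finite union of closed intervals: otherwise the ``moreover'' clause of Theorem~\ref{Kakeya}(iii) (applicable since $x_n\ge x_{n+1}$) would force $x_n\le r_n$ for all large $n$, contradicting that $J$ is infinite. The crucial point is that $A$ has nonempty interior. Since $\N\setminus J$ is infinite, one shows by a direct interval-chasing argument in the sumset $A_m=\{\sum_{m<i\le p}\varepsilon_i x_i\}\boxplus A_p$, exploiting the indices in $\N\setminus J$ inside a suitably chosen block $(m,p]$, that some $A_m$ contains a nondegenerate interval; applying this to each tail $(x_i)_{i>m}$ — which is again in the mixed case — shows that \emph{every} $A_m$ has nonempty interior. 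Combined with $r_m\downarrow 0$ and the decomposition $A=\bigcup_{\varepsilon\in\{0,1\}^m}\bigl(\sum_{i\le m}\varepsilon_i x_i+A_m\bigr)$, this yields that every point of $A$ is a limit of interior points of $A$, i.e.\ $A=\overline{\on{int}A}$.

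It then remains to see that any such $A$ is homeomorphic to $\mathbf{T}$. The approximations $A=\bigcup_{\varepsilon\in\{0,1\}^m}(s_\varepsilon+A_m)$ with $s_\varepsilon=\sum_{i\le m}\varepsilon_i x_i$ arrange the $2^m$ translated copies of $A_m$ into a tree recording which consecutive copies merge into a single clump and where genuine gaps fall between them; since the mixed condition passes to every tail, below every node of this tree both merges and gaps still occur. The Guthrie--Nymann set carries the analogous tree with the same property, so one constructs the homeomorphism $A\to\mathbf{T}$ by an inductive back-and-forth that matches clumps with clumps and gaps with gaps, refining one side when needed since the two trees need not branch identically; the diameters of matched clumps tend to $0$, which makes the limiting map a well-defined homeomorphism. (Equivalently, one invokes the known topological fact that every Cantorval in $\R$ is homeomorphic to $\mathbf{T}$.) I expect the two genuine obstacles to be, first, proving $\on{int}A\ne\emptyset$ in the mixed case — Theorem~\ref{Kakeya} by itself still permits a perfect nowhere-dense, hence Cantor, set — and, second, the combinatorial bookkeeping needed to align the clump/gap trees of $A$ and $\mathbf{T}$ and to verify that the resulting back-and-forth is indeed a homeomorphism.
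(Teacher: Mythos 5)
The paper does not actually prove this theorem: it is quoted from Guthrie and Nymann, and the text explicitly says their proof had a gap and that the correct proof is due to Nymann and S\'aenz \cite{NS0}. So there is no in-paper argument to compare yours against; judged on its own terms, your proposal has a fatal gap at its central step.

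Your case analysis is organized around $J=\{n:x_n>r_n\}$, and you claim that when both $J$ and $\N\setminus J$ are infinite (the ``mixed case'') the set $A(x_n)$ must have nonempty interior and hence be a Cantorval homeomorphic to $\mathbf{T}$. This is false. Take $x_{2n-1}=x_{2n}=q^{\,n-1}$ with $q=\tfrac1{10}$. Then $r_{2n}=\tfrac{2q^{n}}{1-q}=\tfrac29 x_{2n}<x_{2n}$ while $r_{2n-1}=x_{2n}+r_{2n}>x_{2n-1}$, so both $J$ and its complement are infinite. Yet $A(x_n)=\{\sum_{n\ge1}c_nq^{\,n-1}:c_n\in\{0,1,2\}\}$ satisfies $A=qA\cup(1+qA)\cup(2+qA)$ with the three pieces pairwise disjoint (each has diameter $\tfrac{2q}{1-q}=\tfrac29<1$), so $A$ is a nowhere dense compact perfect set, homeomorphic to $C$ and not to $\mathbf{T}$. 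Hence the ``direct interval-chasing argument'' you invoke to produce an interval inside some $A_m$ cannot exist: the conditions in Theorem \ref{Kakeya}(ii)--(iii) are sufficient but not necessary, and the trichotomy is not governed by the sign pattern of $x_n-r_n$. The genuine content of the theorem --- precisely the part where Guthrie and Nymann's own argument was incomplete --- is to show that an achievement set which has nonempty interior but is not a finite union of closed intervals must be a Cantorval (the closure of its interior, with the required accumulation of components at the endpoints); your proposal never reaches that question, because it tries to detect nonempty interior from the tail condition, which cannot be done. Your preliminary reductions (discarding zeros, reflecting negative terms, reordering) and the disposal of cases (i) and (ii) under the cofiniteness hypotheses are fine, and the final ingredient you cite (all Cantorvals in $\R$ are mutually homeomorphic) is a true and standard fact, but neither repairs the broken case split.
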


Although their proof had a gap, the theorem is true and the correct proof
was given by Nymann and Saenz in \cite{NS0}. Guthrie, Nymann and Saenz have
observed that the set $\mathbf{T}$ is homeomorphic to the set $N$ described
by the formula 
\begin{equation*}
N=[0,1]\setminus \bigcup_{n\in \mathbb{N}}U_{2n},
\end{equation*}%
where $U_{n}$ denotes the union of $2^{n-1}$ open middle thirds which are
removed from the interval $[0,1]$ at the $n$-th step in the construction of
the classic Cantor ternary set $C$. Such sets are called Cantorvals in the
literature (to emphasize the similarity to the interval and to the Cantor
set simultaneously). It is known that a Cantorval is such nonempty compact
set in $\mathbb{R}$, that it is the closure of its interior and both
endpoints of any nontrivial component are accumulation points of its trivial
components. Other topological characterizations of Cantorvals can be found
in \cite{BFPW} and \cite{MO}. The set $\mathbf{T}$ is called the Guthrie-Nymann Cantorval. 
All known examples of sequences which achievement sets are
Cantorvals belong to the class of multigeometric sequences. This class was
deeply investigated in \cite{Jones}, \cite{BFS} and \cite{BBFS}. In particular,
the achievement sets of multigeometric series and sets obtained in more
general case are the attractors of the affine iterated function systems, see 
\cite{BBFS}. More information on achievement sets can be found in the
surveys \cite{BFPW}, \cite{N1} and \cite{N2}.

Section \ref{omega} was inspired by the Franciszek Prus-Wi\'sniowski's question from 2017: is it possible to obtain some points of achievement set of absolutely convergent series for precisely countably many subseries? Or in our notation -- is it possible to construct an absolutely convergent series such that $f^{-1}(\omega)\neq\emptyset$? We have found that this problem had been already solved by Erd\"os, Horv\'ath and Jo\'o \cite{Erdos} in 1991, where the authors considered geometric series  $\sum_{n=1}^{\infty}q^n$. The authors study the question for which $q$ the number $1$ has exactly one, countably or continuum many expansions. We show that it is possible to construct a bi-geometric series such that there exist at least countably many points, obtained for exactly countably many expansions each. Indeed, combine Theorem \ref{resztywyrazy} (4) and Proposition \ref{geometrycznystronglyquickly}. Our construction differs from that \cite{Erdos}. 

The Erd\"os, Horv\'ath and Jo\'o  paper \cite{Erdos} initiated the large number of studies during  last 30 years on non-integer bases. In the most general approach by an expansion of a real number $x$ in base $q>1$ on the alphabet $\Sigma=\{0<\sigma_1<\dots<\sigma_n\}$ we mean a sequence $(c_n)$ of elements of $\Sigma$ satisfying the equality
\[
x=\frac{c_1}{q}+\frac{c_2}{q^2}+\dots
\]
If there is a sequence $(a_1,\dots,a_k)$ such that $\Sigma=\{\sum_{i=1}^k\varepsilon_i a_i:\varepsilon_i=0,1\}$, then the set of numbers $x\geq 0$ which have at least one expansion in base $q$ corresponds to the achievement set $A(a_1,\dots,a_k;q)$ of a multigeometric series $(a_1,\dots,a_k;q)$ given by
\[
a_1+\dots+a_k+a_1q+\dots+a_kq+a_1q^2+\dots+a_kq^2+\dots
\]
Given $q>1$ the set of all numbers $x$ whose expansion is unique in base $q$ on the alphabet $\Sigma$ is called \textit{univoque set}. The set, say $E$, of numbers which have at least one expansion is not necessarily equal to an achievement set of any series. To see this consider the alphabet $\Sigma=\{0,1,2,9\}$ and base $q=3$;  then $E$ is neither finite union of intervals nor Cantor set nor Cantorval, see \cite{NS1}.

It turns out that even if $\Sigma$ is of the form $\{\sum_{i=1}^k\varepsilon_i a_i:\varepsilon_i=0,1\}$ for some sequence $(a_1,\dots,a_k)$, the univoque set is not necessarily equal to $f^{-1}(\{1\})$ where $f$ stands for a cardinal function of a mutigeometric series $(a_1,\dots,a_k;q)$. For example consider a sequence $(1,1)$; then $\Sigma=\{0,1,2\}$ and for each unique expansion of a number $t$, on the alphabet $\Sigma$, which contains at least one digit 1, we have $f(t)>1$. However if $(a_1,\dots,a_k)$ is one-to-one and each $\sigma\in\Sigma$ has a unique representation $\sum_{i=1}^k\varepsilon_i a_i$ for some $\varepsilon_i=0,1$, the univoque set equals $f^{-1}(\{1\})$.  

This shows that the theory of non-integer bases and that we present are different.

\section{Measure $\mu$ with $f_\mu^{-1}(\omega)\neq\emptyset$}\label{omega}

A function $\chi:\mathcal{P}(\N)\to\R$ is called finite signed measure if it is countably additive, $\chi(\emptyset)=0$, $\sup\{\chi(A):A\subset\N\}<\infty$ and $\inf\{\chi(A):A\subset\N\}>-\infty$. If $x_n:=\chi(\{n\})$, then the series $\sum_{n=1}^\infty x_n$ is absolutely convergent. A measure $\vert\chi\vert(A)=\sum_{n\in A}\vert x_n\vert$ is called the variation of $\chi$. One can define a cardinal function $f_\chi$ for a signed measure $\chi$. 

Note that for every absolutely convergent series $\sum_{n=1}^{\infty}x_n$ we have $A(\vert x_n\vert)=A(x_n)+ \sum_{n=1}^{\infty}x_n^{-}$, where for a real number $x$ we consider $x^{-}:=\max\{-x,0\}$ and $x^{+}:=\max\{x,0\}$. Moreover, if $x=\sum_{n\in A}\vert x_n\vert$, then 
\[
x- \sum_{n=1}^{\infty}x_n^{-}=\sum_{n\in A} x_n^{+}+\sum_{n\in A} x_n^{-}-\sum_{n=1}^{\infty}x_n^{-}=\sum_{n\in A} x_n^{+}-\sum_{n\in \mathbb{N}\setminus A} x_n^{-}.
\]
Let $E=\{n : x_n>0\}$. Since $\psi: \mathcal{P}(\mathbb{N})\mapsto  \mathcal{P}(\mathbb{N})$ defined as $\psi (A):=(E\cap A)\cup((\mathbb{N}\setminus A)\cap(\mathbb{N}\setminus E))$ is bijective, we get $f(x-\sum_{n=1}^{\infty}x_n^{-})=g(x)$, where $f$ and $g$ are the cardinal functions for $\sum_{n=1}^{\infty}x_n$ and $\sum_{n=1}^{\infty}\vert x_n\vert$ respectively. This shows that $f_\chi$ and $f_{\vert\chi\vert}$ are equal up to shift of the domain. Therefore we will consider only measures or series with positive terms.

We start by giving some basic properties of cardinal function $f$. 
\begin{proposition}\label{podstawy}
Let $x_n=\mu(\{n\})$ for some measure $\mu$. Then
\begin{itemize}
\item[(i)] there are st least two uniquely obtained values in $\on{rng}(\mu)$;
\item[(ii)] $f(x)=f(\sum_{n=1}^{\infty}x_n-x)$ for each $x\in \mathbb{R}$; 
\end{itemize}
\begin{proof}
$(i)$. Clearly $\sum_{n\in A}x_n=\sum_{n=1}^{\infty}x_n$ if and only if $A=\mathbb{N}$. Since $x_n>0$, $\sum_{n\in B}x_n=0$ if and only if $B=\emptyset$. Hence $0$ and $\sum_{n=1}^{\infty}x_n$ have unique representations.

$(ii)$. Let $\mathcal{A}$ be the family of all sets $A$ such that $\sum_{n\in A}x_n=x$. Then $\mathcal{A}^c=\{\mathbb{N}\setminus A : A\in\mathcal{A}\}$ is the family of all sets  $\mathbb{N}\setminus A$ such that $\sum_{n\in \mathbb{N}\setminus A}x_n=\sum_{n=1}^{\infty}x_n-x$. Since $A\mapsto\mathbb{N}\setminus A$ is bijection, we obtain $\vert \mathcal{A}\vert=\vert \mathcal{A}^c\vert$.
\end{proof}
\end{proposition}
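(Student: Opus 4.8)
The plan is to handle the two items separately, each resting on elementary facts about sums of positive reals together with the complementation involution $A\mapsto\N\setminus A$ on $\mathcal P(\N)$.

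For (i), I would simply exhibit $0$ and $s:=\sum_{n=1}^\infty x_n$ as two distinct, uniquely obtained values. Since every $x_n>0$, the sum $\sum_{n\in A}x_n$ equals $0$ exactly when $A=\emptyset$, so $f(0)=1$; and it equals $s$ exactly when $A=\N$, because omitting any index from $\N$ strictly decreases the sum (again by positivity and absolute convergence, which legitimises splitting the series). Hence $f(s)=1$, and $0\neq s$ since $s>0$, so $\{0,s\}\subset f^{-1}(1)$, giving at least two uniquely obtained values.

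For (ii), the key observation is that $c:\mathcal P(\N)\to\mathcal P(\N)$, $c(A)=\N\setminus A$, is a bijection (indeed an involution), and that it carries the fiber $\mu^{-1}(x)=\{A:\sum_{n\in A}x_n=x\}$ onto the fiber $\mu^{-1}(s-x)=\{B:\sum_{n\in B}x_n=s-x\}$, because $\sum_{n\in\N\setminus A}x_n=s-\sum_{n\in A}x_n$ for a positive, absolutely convergent series. A bijection between two sets preserves cardinality, so $f(x)=\vert\mu^{-1}(x)\vert=\vert\mu^{-1}(s-x)\vert=f(s-x)$.

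I do not expect any real obstacle. The only points deserving a word of care are: in (i), justifying that dropping an index strictly decreases the infinite sum, which uses $x_n>0$ and absolute convergence so that the sum may be split; and in (ii), observing that the identity is vacuously consistent when $x\notin\on{rng}(\mu)$, in which case both fibers are empty and both sides equal $0$. No deeper machinery is required — not even Theorem \ref{Kakeya}.
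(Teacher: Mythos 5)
Your proof is correct and follows essentially the same route as the paper: item (i) by exhibiting $0$ and $\sum_{n=1}^\infty x_n$ as uniquely obtained values via positivity of the terms, and item (ii) via the complementation bijection $A\mapsto\N\setminus A$ between the two fibers. The extra remarks (strict decrease upon dropping an index, the vacuous case $x\notin\on{rng}(\mu)$) are harmless elaborations of the same argument.
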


Before we answer the Prus-Wi\'sniowski's question, we give some definitions. For a convergent series $\sum_{n=1}^{\infty}x_n$ of positive terms, by $r_n=\sum_{k=n+1}^{\infty} x_k$ we denote its $n$-th remainder. We say that $\sum_{n=1}^{\infty}x_n$ is quickly convergent if $x_n>r_n$  for each $n\in\mathbb{N}$. A series $\sum_{n=1}^{\infty}x_n$ has property 
\begin{itemize}
\item[(A)] if $x_i>\sum_{n=i+1}^{\infty} (x_n+r_{n-1})$ for each $i\in\mathbb{N}$;
\item[(B)] if $r_{i-1}>\sum_{n=i+1}^{\infty} (x_n+r_{n-1})$ for each $i\in\mathbb{N}$.
\end{itemize}

Since $r_{i-1}>x_i$, it is easy to see that each (A) series is a (B) series. Furthermore every (B) series is quickly convergent. Indeed, if $r_{i-1}>\sum_{n=i+1}^{\infty} (x_n+r_{n-1})$, then from $r_{i-1}=x_i+r_{i}$ we get $x_i>\sum_{n=i+1}^{\infty} (x_n+r_{n})>\sum_{n=i+1}^{\infty}x_n$.
\begin{theorem}\label{resztywyrazy}
Let   $\sum_{n=1}^{\infty}x_n$ be a (B) series. We define $y_{2n-1}=r_{n-1}$, $y_{2n}=x_n$ for each $n\in\mathbb{N}$. Let $f$ be the cardinal function of $\sum_{n=1}^{\infty}y_n$. Then:
\begin{enumerate}
\item $f(r_n)=\omega$ for every $n\in\N$;
\\Moreover if $\sum_{n=1}^{\infty}x_n$ is (A) series then:
\item $f(x_n)=1$ for every $n\in\N$;
\item $f( \sum_{n\in F}x_n)=1$ for each $F\subset\mathbb{N}$ such that $\vert\mathbb{N}\setminus F\vert=\infty$;
\item $f(\sum_{n\in F}x_n)=\omega$ for each $F\subset\mathbb{N}$ such that $1\leq\vert\mathbb{N}\setminus F\vert<\infty$;
\item there are continuum many points $t$ such that $f(t)=\mathfrak{c}$;
\item $R(f)=\{1,2,\ldots,k,\ldots,\omega,\mathfrak{c}\}$.
\end{enumerate}
\end{theorem}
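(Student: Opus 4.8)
The whole theorem is driven by three elementary observations about $(y_n)$. First, $(y_n)$ is strictly decreasing: $y_{2n-1}=r_{n-1}=x_n+r_n>x_n=y_{2n}$, then $y_{2n}=x_n>r_n=y_{2n+1}$ by quick convergence (implied by (B)), and $y_{2n+1}=r_n=x_{n+1}+r_{n+1}>x_{n+1}=y_{2n+2}$. Second, the defining relation yields the identity $y_{2n-1}=y_{2n}+y_{2n+1}$ (that is, $r_{n-1}=x_n+r_n$) for every $n$. Third — and here the hypotheses enter — writing $R_k=\sum_{j>k}y_j$ one checks $R_{2i}=\sum_{k>i}(x_k+r_{k-1})$, so property (B) says exactly $y_{2i-1}=r_{i-1}>R_{2i}$ for all $i$, while property (A) says exactly $y_{2i}=x_i>R_{2i}$ for all $i$ (and (A)$\Rightarrow$(B) since $r_{i-1}>x_i$); in words, under (A) every \emph{even}-indexed term of $(y_n)$ exceeds the sum of all later terms. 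Every count below combines strict monotonicity, this ``domination'' property, and the identity $y_{2n-1}=y_{2n}+y_{2n+1}$. (Convergence of $\sum y_n$ itself follows from (B) at $i=1$.)

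For (1) I would describe all $A\subseteq\N$ with $\sum_{n\in A}y_n=r_n$. Monotonicity forces $A\subseteq\{2n+1,2n+2,\dots\}$ (a term $y_i$, $i\le 2n$, already overshoots); if $2n+1\in A$ then $A=\{2n+1\}$; if $2n+1\notin A$ then $2n+2\in A$ is forced, since otherwise $\sum_{n\in A}y_n\le R_{2n+2}<r_n$ by (B), and then $A\setminus\{2n+2\}$ represents $r_n-x_{n+1}=r_{n+1}$ inside $\{2n+3,\dots\}$ and the analysis repeats. The representations are therefore exactly $\{2n+1\}$, the finite sets $\{2n+2,2n+4,\dots,2n+2j,2n+2j+1\}$ for $j\ge1$, and the infinite set $\{2n+2,2n+4,2n+6,\dots\}$ — there are $\omega$ of them. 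For (2) and (3) I would run the analogous but now \emph{terminating} argument: given $t=\sum_{n\in F}x_n$ with $\N\setminus F$ infinite, an induction on blocks shows a representing set $A$ must satisfy $2N-1\notin A$ and $2N\in A\Leftrightarrow N\in F$ for every $N$ — the odd index is excluded because including it would force (via $\sum_{j\in A,\,j\ge2N-1}y_j\ge y_{2N-1}=r_{N-1}\ge\sum_{n\in F,\,n\ge N}x_n$ and equality) $\N\setminus F$ to be finite, and the even index is pinned down using domination ($x_N>R_{2N}$) if $N\in F$ and quick convergence ($r_N<x_N$) if $N\notin F$. Hence $A=\{2n:n\in F\}$ is the unique representation, so $f(t)=1$.

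Part (4) is (3) glued to (1): with $\N\setminus F$ finite, $m=\max(\N\setminus F)$, and $F'=F\cap[1,m-1]$, one has $t=\sum_{n\in F'}x_n+r_m$; the same block induction (now the estimates close because $\sum_{n\in F',\,n\ge N}x_n+r_m\le r_{N-1}-x_m<r_{N-1}$) pins $A\cap[1,2m]=\{2n:n\in F'\}$ and shows $2m-1,2m\notin A$, so $A\cap\{2m+1,2m+2,\dots\}$ is an arbitrary representation of $r_m$ inside $\{2m+1,\dots\}$ — by (1) there are exactly $\omega$, so $f(t)=\omega$. For (5), for each infinite $G\subseteq\{0,1,2,\dots\}$ put $t_G=\sum_{k\in G}y_{4k+1}=\sum_{k\in G}r_{2k}$, convergent since $\sum_n r_{n-1}<\infty$. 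Applying the identity in the form $y_{4k+1}=y_{4k+2}+y_{4k+3}$ on the pairwise disjoint blocks $\{4k+1\}$ and $\{4k+2,4k+3\}$, every $\varepsilon\in\{0,1\}^G$ produces its own set representing $t_G$ (use $\{4k+1\}$ when $\varepsilon_k=0$ and $\{4k+2,4k+3\}$ when $\varepsilon_k=1$), whence $f(t_G)=\mathfrak{c}$; and $G\mapsto t_G$ is injective because $(y_{4k+1})_k$ is super‑decreasing, $y_{4k+1}>y_{4k+2}>R_{4k+2}\ge\sum_{k'>k}y_{4k'+1}$ by (A). So $\mathfrak{c}$ many points have $f$-value $\mathfrak{c}$.

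Finally, in (6) the inclusion ``$\subseteq$'' is automatic. Parts (1),(2),(5) put $\omega,1,\mathfrak{c}$ into $R(f)$, so it remains to realize every integer $k\ge 2$, and I would do this via the formula $f(r_n+x_m)=m-n$ for all $m>n\ge1$: monotonicity together with $x_n>R_{2n}\ge r_n+x_m$ forces any representing set into $\{2n+1,2n+2,\dots\}$; if $2n+1\in A$ the remainder represents $x_m$ and, as in (2), equals $\{2m\}$ — one representation; if $2n+1\notin A$ then $2n+2\in A$ is forced (else $\sum\le R_{2n+2}<r_n$ by (B)) and $A\setminus\{2n+2\}$ represents $r_{n+1}+x_m$ inside $\{2n+3,\dots\}$, giving the recursion $f(r_n+x_m)=1+f(r_{n+1}+x_m)$ with base case $f(r_{m-1}+x_m)=1$ (there the tail could only be $\{2m-1,2m\}$, since excluding $2m-1$ would need $2x_m+r_m\le x_m+R_{2m}$, contradicting (A)). Taking $n=1$ and $m=k+1$ yields a point with exactly $k$ representations, so $\{1,2,3,\dots\}\cup\{\omega,\mathfrak{c}\}\subseteq R(f)$, completing the equality. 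The main obstacle is not conceptual but the bookkeeping: the ``forced representation'' inductions in (3)–(4) and the counting recursion in (6) must be executed carefully enough that no representation is ever missed, and one should keep track of exactly which hypothesis is used at each estimate — only quick convergence and (B) for (1), the full strength of (A) for (2)–(6).
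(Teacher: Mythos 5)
Your proof is correct and, for parts (1)--(5), follows essentially the same route as the paper: the same explicit list of representations of $r_n$ (the singleton $\{2n+1\}$, the finite sets $\{2n+2,2n+4,\dots,2n+2j,2n+2j+1\}$, and the single infinite set of even indices), the same block-by-block forcing induction for (2)--(4), and in (5) the same mechanism of independently replacing $y_{4k+1}=r_{2k}$ by $y_{4k+2}+y_{4k+3}$ on disjoint blocks --- you merely parametrize the continuum family by which terms $r_{2k}$ occur, whereas the paper fixes $\sum_{n}r_{2n}$ and varies $\sum_{n\in H}x_{2n}$; both work, and you (unlike the paper) explicitly verify injectivity of the parametrization. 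Your reformulation of (A) and (B) as $y_{2i}>R_{2i}$ and $y_{2i-1}>R_{2i}$ respectively is exactly what the paper uses implicitly, and your accounting of which hypothesis enters where matches the paper's. The one genuine divergence is part (6): the paper realizes the value $k$ at the point $\sum_{n=1}^\infty x_{kn}+r_0$, whose representing sets are all infinite, while you use the finite sum $r_1+x_{k+1}$ together with the recursion $f(r_n+x_m)=1+f(r_{n+1}+x_m)$ and base case $f(r_{m-1}+x_m)=1$. The underlying mechanism is the same --- the telescoping $r_n=x_{n+1}+r_{n+1}$ can be unfolded only until it collides with a term already in use --- but your witness keeps all representing sets finite and the count is arguably easier to close rigorously. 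I find no gaps.
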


\begin{proof}
(1) Let $W(t):=\{A\subset\N:\sum_{n\in A}y_n=t\}$. Fix $k\in\mathbb{N}_0:=\N\cup\{0\}$. Then $r_k=x_{k+1}+r_{k+1}=x_{k+1}+x_{k+2}+r_{k+2}=\ldots=\sum_{n=k+1}^{m}x_n+r_{m}$ for each $m\geq k$. Thus $W(r_k)\supset\{\{2k+2,2k+4,\ldots,2k+2m,2k+2m+1\}: m\in\mathbb{N}_0 \}$. Hence $f(r_k)\geq \omega$. 
Note that if $A\in W(r_k)$ then $A\subset[2k+1,\infty)$, since for $n\leq 2k$ we have $y_n>r_k$. 
We will prove inductively that $W(r_k)=\bigcup_{m\in\mathbb{N}_0}W^k_m$, where $W^k_m=\{2k+2,2k+4,\ldots,2k+2m,2k+2m+1\}$ for each  $m\in\mathbb{N}_0$. Namely we will prove that if $A\in W(r_k)$ and $\vert A\vert=m+1$, then $A=W_m$. For $m=0$ and $m=1$ the assertion is obvious. Assume that we have already proved that for each $k\in\mathbb{N}_0$ and for some $p\in\mathbb{N}$ every $p$-elements set $A\in W(r_k)$ equals to $W^k_{p-1}$. Let $B\in W(r_k)$, $\vert B\vert=p+1$. Note that  $2k+1\in B$ implies $\vert B\vert=1$, which yields a contradiction. Thus $2k+1\notin B$. Suppose that $2k+2\notin B$. Then $r_k>\sum_{n=k+2}^{\infty}(x_n+r_{n-1})>\sum_{n\in B}y_n=r_k$ and again we get a contradiction. Then $2k+2\in B$, so $\sum_{n\in B\setminus\{2k+2\}}y_n=\sum_{n\in B}y_n-y_{2k+2}=r_k-x_{k+1}=r_{k+1}$. Since $\vert  B\setminus\{2k+2\}\vert=p$ by the inductive assumption we obtain $B\setminus\{2k+2\}=\{2k+4,2k+6,\ldots,2k+2p,2k+2p+1\}=W^{k+1}_{p-1}$. Thus $B=\{2k+2,2k+4,2k+6,\ldots,2k+2p,2k+2p+1\}$.

To finish the proof of the first part we need to show that if $A\in W(r_k)$ is infinite, then $A=2\mathbb{N}\setminus\{2,4,\ldots,2k\}$. Note that $2k+1\notin A$ and $2k+2\in A$. Assume that we have already proved that $2k+2i-1\notin A$ and $2k+2i\in A$ for each $i\leq p$. We need to show that $2k+2p+1\notin A$ and $2k+2p+2\in A$. Since $\sum_{i=1}^{p}y_{2k+2i} + y_{2k+2p+1}=r_k$ and $\vert A\vert >p+1$ we obtain that $2k+2p+1\notin A$. Moreover $r_k-\sum_{i=1}^{p}y_{2k+2i} =r_{k+p}>\sum_{i\geq 2k+2p+3}y_i$, so $2k+2p+2\in A$.
By the induction we obtain $A=2\mathbb{N}\setminus\{2,4,\ldots,2k\}$. Thus for each $k\in\mathbb{N}$ we have $f(r_k)=\omega$.

(2) Fix $k\in\mathbb{N}$ and assume that $\sum_{n\in A}y_n=x_k$. Note that for $n<2k$ we have $y_n>x_k$, so $A\cap[1,2k-1]=\emptyset$. If $2k\notin A$, the inequality $x_k>\sum_{n>2k}y_n$ leads to a contradiction. Hence $A=\{2k\}$. Thus $f(x_k)=1$.

(3) Let $F=(f_n)_{n=1}^{k}$ be an increasing sequence of indices with  $\vert\mathbb{N}\setminus F\vert=\infty$, where $k\in\mathbb{N}\cup\{\infty\}$ and $x:=\sum_{n=1}^{k}x_{f_n}=\sum_{n=1}^{k}y_{2f_n}$. Let $G\subset\N$ be such that $x=\sum_{n\in G}y_n$. Since $\vert\mathbb{N}\setminus F\vert=\infty$, then $\sum_{n\geq i}x_{f_n}<r_{f_i-1}$. Therefore $y_{2f_1-1}>x$ and consequently $G\cap[1,2f_1)=\emptyset$. Note that $y_{2f_1}\in G$. Otherwise 
\[
x=\sum_{n\in G}y_n<\sum_{n>2f_1}y_n=\sum_{n=f_1+1}^\infty x_n+\sum_{n=f_1+1}^\infty r_{n-1}<x_{f_1}=y_{2f_1}\leq x
\]
which yields a contradiction. Next $x-y_{2f_1}<r_{f_2-1}$, so $G\cap(2f_1,2f_2)=\emptyset$. As before $y_{2f_2}\in G$; otherwise $x-y_{2f_1}<\sum_{n>2f_2}y_n<y_{2f_2}\leq x-y_{2f_1}$. By a simple induction we obtain $2f_n\in G$ for each $n\leq k$. Since $y=\sum_{n=1}^{k}y_{2f_n}$ we obtain $G=2F$. Hence $f(x)=1$.

(4)  Now let $F=(f_n)_{n=1}^{\infty}$ be an increasing sequence of indices and  $\vert\mathbb{N}\setminus F\vert<\infty$. Thus $F=(f_n)_{n=1}^{m}\cup[p,\infty)$ for some $p>f_m+1$ and $m\in\mathbb{N}$. Let $x=\sum_{n\in F}x_n=\sum_{n=1}^{m}x_n+r_{p-1}$. Assume that  $x=\sum_{n\in G}x_n$. Using the same argument as before we show that $G\supset (2f_n)_{n=1}^{m}$. Thus $\sum_{n\in G\setminus(2f_n)_{n=1}^{m}}y_n=r_{p-1}$. By $(1)$ there exists infinite and countable family of sets $\mathcal{H}=(H_k)_{k=1}^{\infty}$ such that $r_{p-1}=\sum_{n\in H_k}y_n$ for each $k\in\mathbb{N}$. Let $\mathcal{G}=(G_k)_{k=1}^{\infty}$, where $G_k=H_k\cup (2f_n)_{n=1}^{m}$ for every $k\in\mathbb{N}$, and such that $r_{p-1}=\sum_{n\in H}y_n$ implies $H=H_k$ for some $k\in\N$. Clearly $G$ is an element of $\mathcal{G}$. Note that $x=\sum_{n\in G_k}y_n$ for every $k\in\mathbb{N}$. Thus $\vert\mathcal{G}\vert=\omega$ and we obtain the assertion.

(5) We will show that $f(x_H)=\mathfrak{c}$ for $x_H:=\sum_{n\in H}x_{2n}+\sum_{n=1}^{\infty}r_{2n}$ where $H\subset\N$. Since $r_{2n}=x_{2n+1}+r_{2n+1}$ for each $n\in\mathbb{N}$, we obtain $x_H=\sum_{n\in H}x_{2n}+\sum_{n=1}^{\infty}v_{n}$, where by every $v_n$ we can take $r_{2n}$ either $x_{2n+1}+r_{2n+1}$. 

(6) Fix $k\in\mathbb{N}$ . We will show that $f(x)=k$ for $x:=\sum_{n=1}^{\infty}x_{kn}+r_{0}$. Since 
$r_0=x_1+r_1=x_1+x_2+r_2=\ldots=x_1+x_2+\ldots+x_{n-1}+r_{n-1}$, it is clear that $f(x)\geq k$. Moreover by (A) series property, if we ommit $r_0$ in our sum, we must take $x_1$, if we ommit $r_1$, we must take $x_2$, and so on. Therefore each representation $\sum_{n\in M}x_n$ of $x$ need to contain $r_0$ or $r_1+x_1$ or one of the $k-2$ remaining representations of $r_0$. Now let us assume that $x=\sum_{n\in M} x_{n}+r_{0}$. Note that $p\notin M$ for each $p<k$, since otherwise $\sum x_{m_n}+r_0\geq x_p+r_0>x$. On the other hand if $k\notin M$, then $\sum_{n\in M} x_{n}+r_0<x_k+r_0<x$ and we reach a contradiction as well. Hence $k\in M$. In the same way for consequtive $m\in\mathbb{N}$, we inductively eliminates the elements $x_p$ for $p\in [mk+1,(m+1)k)$ and show that $(m+1)k\in M$. Hence $M=\{kn:n\in\N\}$ and there are no new subseries which sum up to $x$. That is $f(x)=k$. 
\end{proof}

Now we give the characterization of (A) series and (B) series within the class of geometric series.
\begin{proposition}\label{geometrycznystronglyquickly}
Let us consider a geometric sequence $x_n:=q^n$. Then the series $\sum_{n=1}^{\infty}x_n$ has
\begin{itemize}
\item[(i)] property  (A) for $q\in (0,\frac{2-\sqrt{2}}{2})$;
\item[(ii)] property (B) for $q\in (0,\frac{3-\sqrt{5}}{2})$.
\end{itemize}
\begin{proof}
(i) Fix $k\in\mathbb{N}$. We have $r_{m-1}=\sum_{n=m}^{\infty}q^n=\frac{q^m}{1-q}$ for every  $m\in\mathbb{N}$. The series has (A) property if and only if $q^k=x_k>\sum_{n=k+1}^{\infty} (x_n+r_{n-1})=\sum_{n=k+1}^{\infty} (q^n+\frac{q^n}{1-q})=\frac{(2-q)q^{k+1}}{(1-q)^2}$. Thus 
$$
1>\frac{(2-q)q}{(1-q)^2}\Leftrightarrow 1-2q+q^2>2q-q^2\Leftrightarrow 2q^2-4q+1>0\Leftrightarrow q\in\left(-\infty, \frac{2-\sqrt{2}}{2}\right)\cup \left(\frac{2+\sqrt{2}}{2},\infty\right).
$$
Hence the geometric series has (A) property for $q\in (0,\frac{2-\sqrt{2}}{2})$.

(ii) The series has (B) property if and only if $r_{k-1}=\frac{q^k}{1-q}>\frac{(2-q)q^{k+1}}{(1-q)^2}$. Hence 
$$
1>\frac{(2-q)q}{(1-q)}\Leftrightarrow 1-q>2q-q^2\Leftrightarrow q^2-3q+1>0\Leftrightarrow  q\in\left(-\infty, \frac{3-\sqrt{5}}{2}\right)\cup \left(\frac{3+\sqrt{5}}{2},\infty\right).
$$
It shows that the geometric series has (B) property for $q\in (0,\frac{3-\sqrt{5}}{2})$.
\end{proof}
\end{proposition}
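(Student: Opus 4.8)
The plan is to reduce both conditions to a single elementary inequality about $q$, using that every quantity in play is a geometric series. First I would record the closed form of the tails: for $q\in(0,1)$,
\[
r_{m-1}=\sum_{n=m}^{\infty}q^{n}=\frac{q^{m}}{1-q}\qquad(m\in\mathbb{N}).
\]
Substituting this into the sum that occurs in both (A) and (B) gives the one estimate driving everything:
\[
\sum_{n=i+1}^{\infty}\bigl(x_{n}+r_{n-1}\bigr)=\sum_{n=i+1}^{\infty}\Bigl(q^{n}+\frac{q^{n}}{1-q}\Bigr)=\frac{2-q}{1-q}\sum_{n=i+1}^{\infty}q^{n}=\frac{(2-q)\,q^{i+1}}{(1-q)^{2}}.
\]

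For part (i), property (A) requires $q^{i}=x_{i}>\dfrac{(2-q)q^{i+1}}{(1-q)^{2}}$ for every $i\in\mathbb{N}$. Dividing through by $q^{i}>0$ removes the dependence on $i$, so the whole family of inequalities is equivalent to the single inequality $1>\dfrac{(2-q)q}{(1-q)^{2}}$; since $1-q>0$ on $(0,1)$, clearing denominators preserves the direction and yields $2q^{2}-4q+1>0$. This quadratic has roots $\frac{2\pm\sqrt{2}}{2}$ and opens upward, so on $(0,1)$ it holds exactly for $q\in\bigl(0,\frac{2-\sqrt{2}}{2}\bigr)$.

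For part (ii), property (B) requires $r_{i-1}=\dfrac{q^{i}}{1-q}>\dfrac{(2-q)q^{i+1}}{(1-q)^{2}}$ for every $i$. Dividing by the common positive factor $\dfrac{q^{i}}{1-q}$ again kills the $i$-dependence and leaves $1>\dfrac{(2-q)q}{1-q}$, i.e.\ $q^{2}-3q+1>0$, with roots $\frac{3\pm\sqrt{5}}{2}$; intersecting the solution set with $(0,1)$ gives $q\in\bigl(0,\frac{3-\sqrt{5}}{2}\bigr)$.

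There is essentially no obstacle here: the only points needing care are that $1-q$ and $(1-q)^{2}$ are positive on $(0,1)$ (so multiplying by them does not reverse the inequalities), and that after cancelling the common factor the inequality no longer involves $i$, so checking the relevant quadratic once validates the whole infinite family. As a sanity check, $\frac{2-\sqrt{2}}{2}<\frac{3-\sqrt{5}}{2}$, which is consistent with the earlier observation that every (A) series is a (B) series.
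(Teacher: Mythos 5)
Your proposal is correct and follows essentially the same route as the paper: compute the closed form of the tails, reduce each condition to a single $i$-independent inequality by cancelling the positive factor $q^{i}$ (resp.\ $q^{i}/(1-q)$), and solve the resulting quadratic. No issues.
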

\begin{remark}\emph{
Let $x_n=q^n$ for some $q\in(\frac{2-\sqrt{2}}{2},\frac{3-\sqrt{5}}{2})$, that is by the Proposition \ref{geometrycznystronglyquickly} the series $\sum_{n=1}^{\infty}x_n$ is (B), but it is not (A). We define the terms of the series $\sum_{n=1}^{\infty}y_n$ in the same way as in the Theorem \ref{resztywyrazy}. Fix $n\in\mathbb{N}$. We have: 
$$y_{2n-1}=r_{n-1}=\sum_{k=n}^{\infty}x_k=\sum_{k=n}^{\infty}y_{2k}\leq \sum_{k=2n}^{\infty}y_{k}$$
and 
$$y_{2n}=x_n\leq\sum_{k=n}^{\infty} (x_{k+1}+r_{k})=\sum_{k=2n+1}^{\infty}y_{k},$$
since $\sum_{n=1}^{\infty}x_n$ is not (A) series. Hence $y_{n}\leq \sum_{k=n+1}^{\infty}y_{k}$ for each $n\in\mathbb{N}$. Then $A(y_n)=[0,\sum_{n=1}^{\infty}y_n]$. By Theorem \ref{resztywyrazy} we get that $f^{-1}(\{\omega\})\neq\emptyset$. Hence it is possible to construct a measure, which range is a closed interval and there exists a value, which is obtained for exactly  countably infinite many distinct sets.}
\end{remark}


Proposition \ref{geometrycznystronglyquickly} show us that we can construct a series in Thoerem \ref{resztywyrazy} in the way it is bigeometric. It is a subclass of so-called multigeometric series. A multigeometric series is each series, whose terms are the following: 
$$
(k_0,k_1,k_2,\ldots,k_m,k_0q,k_1q,k_2q,\ldots,k_mq,k_0q^2,k_1q^2,k_2q^2,\ldots,k_mq^2,\ldots)
$$
for some finite set $\{k_0,k_1,k_2,\ldots,k_m\}$ of real numbers and $q\in (0,1)$. A bigeometric series is a multigeometric series with $m=1$. Using Thoerem \ref{resztywyrazy} for $x_n=q^n$ we obtain the bigeometric series $\sum_{n=1}^{\infty}y_n$, where $k_0=\frac{q}{1-q}$, $k_1=q$. Moreover, by using the methods of Theorem \ref{resztywyrazy} we cannot obtain $(y_n)$ to be a geometric sequence; by a simple calculations $(y_n)$ is geometric provided $q=\frac{3-\sqrt{5}}{2}$, which leads to a series, which does not have property (B).

\begin{proposition}\label{wszystkobezomegi}
There exists a measure $\mu$ for which $R(f_\mu)=\mathbb{N}\cup\{\mathfrak{c}\}$, that is the cardinal function $f_\mu$ has all possible values but $\omega$.  
\end{proposition}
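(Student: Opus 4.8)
The plan is to take a "direct sum" of two well-chosen series whose achievement sets are disjoint and sufficiently separated, so that the cardinal function of the combined measure is essentially the "product" of the two individual cardinal functions, and then arrange the two pieces so that one contributes the finite values $\{1,2,\dots\}$ and the other contributes the value $\mathfrak{c}$, while no interaction ever produces the value $\omega$. Concretely, I would first fix a geometric (or bigeometric) series $\sum x_n$ that is an (A) series — possible by Proposition \ref{geometrycznystronglyquickly} — and apply Theorem \ref{resztywyrazy} to the associated series $\sum y_n$; by parts (2), (5) and (6) of that theorem this series already realizes every finite cardinal $k$ and the cardinal $\mathfrak{c}$, so the only thing to be removed is the value $\omega$, which by parts (1), (3), (4) is attained precisely at the "co-finite but not co-empty" sums $\sum_{n\in F}x_n$ with $1\le|\N\setminus F|<\infty$.

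Next I would modify or truncate the construction so that those exact sums cease to be attained infinitely often. The cleanest route is to interleave a rapidly decaying tail that destroys the countably-many-representations phenomenon: replace the series $\sum y_n$ by a series $\sum z_n$ obtained by \emph{speeding up} the remainders, i.e. instead of $y_{2n-1}=r_{n-1}$ use a value strictly between $x_n$ and $r_{n-1}$ chosen so that the "absorption" identities $r_k=\sum_{j=k+1}^m x_j+r_m$ of the proof of Theorem \ref{resztywyrazy}(1) can no longer all hold simultaneously — this is exactly what forces $f(r_k)$ down from $\omega$ to a finite number. One must do this while preserving enough of the (A)-type inequalities that parts (2)--(6) of the argument still go through verbatim for the finitely-many-exception sums, so that all finite values \emph{and} $\mathfrak c$ survive. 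Alternatively, and perhaps more robustly, I would build $\mu$ as a disjoint union $\mu=\mu_1\oplus\mu_2$ where $\mu_1$ is a finite-support measure engineered so that $R(f_{\mu_1})$ is a prescribed finite set like $\{1,2\}$ (using the $(a_1,\dots,a_k;q)$ style blocks so that a single block contributes a chosen finite multiplicity), $\mu_2$ is a Cantor-type series with a single interval overlap so that $R(f_{\mu_2})=\{1,\mathfrak c\}$, and the gap between the two scales is so large that $f_\mu(t)=f_{\mu_1}(t_1)\cdot f_{\mu_2}(t_2)$ for the unique splitting $t=t_1+t_2$; taking a sequence of $\mu_1$-blocks realizing $1,2,3,\dots$ gives all of $\N$, and the $\mathfrak c$ from $\mu_2$ multiplies through, while a product of finite-or-continuum cardinals is never $\omega$.

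The key points to check are: (a) the separation/gap condition that legitimizes the multiplicativity $f_\mu=f_{\mu_1}\cdot f_{\mu_2}$, which amounts to an inequality of the form $x_N^{(1)}>\sum_n x_n^{(2)}$ (largest atom of the finite part dominates the whole infinite part, or vice versa) together with the analogous domination inside each block; (b) that the finite part can be made to realize \emph{every} natural number — here I would cite/adapt Theorem \ref{resztywyrazy}(6), which already produces a single value with $f=k$ for each $k$, and stack scaled copies of such configurations at geometrically separated scales; and (c) that $\mathfrak c$ genuinely appears, for which I would include one "interval-type" sub-block where $x_n\le\sum_{i>n}x_i$ so that Kakeya's Theorem \ref{Kakeya}(iii) forces an interval and hence continuum-many representations of its interior points. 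I expect the main obstacle to be bookkeeping in (a): making the scales separate enough that \emph{no} cross-scale coincidence of subsums occurs (which is what could otherwise create an unwanted $\omega$), while simultaneously keeping the series summable — this is a standard but delicate "lacunary gaps" argument, and the rest is then routine arithmetic on the ranges.
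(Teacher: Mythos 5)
Your second route contains the right germ --- blocks of atoms at rapidly separated scales, one block per natural number --- and this is essentially what the paper does: it partitions $\N$ into consecutive blocks $A_n$ with $\vert A_n\vert=n$, takes all atoms inside a block equal, and makes each single atom of $A_n$ exceed the total mass of all later blocks. Then every representation of $x=\sum_{j}\sum_{i\in B_j}x_i$ with $B_j\subset A_j$ must use exactly $\vert B_j\vert$ atoms from each $A_j$, so $f(x)=\prod_j{j\choose \vert B_j\vert}$; choosing one atom from $A_n$ gives $f(x)=n$, finitely many partially used blocks give a finite product, and infinitely many partially used blocks give $\mathfrak{c}$. Thus $\omega$ never occurs and no second measure $\mu_2$ is needed at all.

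As written, though, your plan has a genuine gap exactly where you suspected it, and it is not mere bookkeeping. The multiplicativity $f_\mu(t)=f_{\mu_1}(t_1)\cdot f_{\mu_2}(t_2)$ requires the decomposition $t=t_1+t_2$ to be \emph{unique}, and the atomwise domination you propose (largest atom of one part exceeding the total mass of the other) does not deliver this once $\mu_1$ has infinite support. To realize all of $\N$ the measure $\mu_1$ cannot live on a finite set, so $\on{rng}(\mu_1)$ is perfect and contains pairs of distinct subset sums at arbitrarily small distance; if $\on{rng}(\mu_2)$ is a nondegenerate interval (your Kakeya sub-block), the translates $t_1+\on{rng}(\mu_2)$ overlap, and $f_\mu(t)$ becomes a countable \emph{sum} $\sum_{t_1}f_{\mu_1}(t_1)\,f_{\mu_2}(t-t_1)$ over all admissible splittings --- which you have not ruled out being $\omega$, the very value you must avoid. (This is why Proposition \ref{szeregiskonczone} of the paper insists that one of the two factors be a measure on a finite set.) Your first route, deforming the series of Theorem \ref{resztywyrazy} so as to kill the $\omega$-values while preserving parts (2)--(6), is only stated as an intention, with no candidate inequalities verified. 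The repair is to drop $\mu_2$ and the cross-scale product entirely and note that the stacked-block construction already yields $\mathfrak{c}$ on its own whenever infinitely many blocks are partially used.
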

\begin{proof}
Let $A_n=[\frac{n^2-n+2}{2},\frac{n^2+n}{2}]$ for each $n\in\mathbb{N}$. Clearly $A_1,A_2,\dots$ is a partition of $\N$ with $\vert A_n\vert=n$. Let $\sum_{n=1}^{\infty}x_n$ be any series satisfying the following two conditions:
\begin{itemize}
\item[(i)] for each $n\in\mathbb{N}$ and any $j,l\in A_n$ we have $x_j=x_l$;
\item[(ii)] for each $n\in\mathbb{N}$ and $k\in A_n$  the inequality $x_k>\sum_{m>n}\sum_{i\in A_m}x_i$ holds.
\end{itemize}
For example $x_k=\frac{1}{10^{n^2}}$ for $k\in A_n$ and $n\in\mathbb{N}$ satisfies conditions (i) and (ii). Note that $\sum_{i\in B_j}x_i=\sum_{i\in C_j}x_i$, where $B_j\subset A_j$, $C_j\subset A_j$ for $j\in\mathbb{N}$, implies, by (i) and (ii), that $\vert B_j\vert=\vert C_j\vert$ for all $j$. Let $x=\sum_{i\in B_j}x_i$. Firstly, consider the case when $\emptyset\neq B_j\neq A_j$ holds for finitely many $j$'s, and denote them by $j_1,\ldots, j_r$.   Then $f(x)=\prod_{p=1}^{r} {j_p\choose\vert B_{j_p}\vert}$. In particular for an element $x$ such that $r=1$, $\vert B_{j_1}\vert=1$, $j_1=n$ we obtain $f(x)=n$. Hence $\mathbb{N}\subset R(f)$. Secondly, let us assume that  $\emptyset\neq B_j\neq A_j$ holds for infinitely many $j$'s. Then clearly $f(x)=\mathfrak{c}$.
\end{proof}

The next example shows that there exists a measure $\mu$ such that $R(f_\mu)=\{1,\mathfrak{c}\}$.
\begin{example}\label{skrajne}
\emph{Let $x_{2n-1}=x_{2n}=\frac{1}{2^n}$. Then $A(x_n)=[0,2]$. By Proposition \ref{podstawy} we know that $f(0)=f(2)=1$. Assume that $x\in (0,1]$. Then there exists unique, infinite subset $F=(k_n)_{n=1}^{\infty}$ of odd idices such that  $x=\sum_{n\in F}x_n$. Let us consider $\mathcal{F}=\{(k_n+j_n)_{n=1}^{\infty} : (j_n)\in\{0,1\}^{\mathbb{N}}\}$. Since for every $A\in\mathcal{F}$ we have $x=\sum_{n\in A}x_n$ and $\vert\mathcal{F}\vert=\mathfrak{c}$. 
From Proposition \ref{podstawy}(2) we obtain $f(x)=\mathfrak{c}$ for every  $x\in(0,2)$.}
\end{example}
One can prove a more general statement. Assume that for an absolutely convergent series $\sum_{n=1}^{\infty}x_n$ there exists two sequences of pairwise disjoint sets of indices $(F_n),(G_n)$ such that for all $i,j\in\mathbb{N}$ we have $F_i\cap G_j=\emptyset$ and $\sum_{n\in G_i}x_n=\sum_{n\in F_i}x_n$.
Then $f^{-1}(\mathfrak{c})$ contains a Cantor set. Indeed, if $t=\sum_{i=1}^\infty\sum_{n\in F_i}x_n$, then 
\[
t=\sum_{i=1}^\infty\sum_{n\in F^{\varepsilon_i}_i}x_n,
\] 
where $(\varepsilon_i)\in\{0,1\}^\N$, $F_i^0=F_i$ and $F_i^1=G_i$. Therefore $f(t)=\mathfrak{c}$. The same holds for $t_X=\sum_{i\in X}^\infty\sum_{n\in F_i}x_n$ for every infinite set $X\subset\N$. Let $t_i=\sum_{n\in F_i}x_n$. Then
\[
A(t_i)=\{t_X:X\text{ is infinite}\}\cup\{t_X:X\text{ is finite}\}
\]
Note that $\{t_X:X\text{ is finite}\}$ is countable and $A(t_i)$ is uncountable compact. Therefore $f^{-1}(\mathfrak{c})$ contains an uncountable $G_\delta$ set $\{t_X:X\text{ is infinite}\}$; in particular $f^{-1}(\mathfrak{c})$ contains a Cantor set \cite{Kechris}. 

Every example of a measure $\mu$ with $f_\mu^{-1}(\mathfrak{c})\neq\emptyset$ we have examined had the property that $f_\mu^{-1}(\mathfrak{c})$ contains a Cantor set. However we are not able to determine whether such phenomena holds for every measure. 
\begin{problem}
Let $\sum_{n=1}^{\infty}x_n$ be an absolutely convergent series and $f^{-1}(\mathfrak{c})\neq\emptyset$. Is it true that  $f^{-1}(\mathfrak{c})$ contains a Cantor set?
\end{problem}

In general, if there is value $t\in\on{rng}(\mu)$ which is achieved infinitely many times, then there are infinitely many such values. More precisely the following holds. 
\begin{proposition}
Assume that there is $t\in\on{rng}(\mu)$ with $f_{\mu}(t)\geq\omega$. Then the set $f_\mu^{-1}(\{\omega,\mathfrak{c}\})$ is dense in $\on{rng}(\mu)$. 
\end{proposition}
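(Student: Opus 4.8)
The plan is to reduce the statement to approximating the finitely supported values $\mu(G)$, $G\in Fin$, which are dense in $\on{rng}(\mu)$ by the remark in the introduction, by producing, near each such $\mu(G)$, a value that is attained infinitely often. The engine is a compactness observation. Since $f_\mu(t)\geq\omega$, the closed set $\mu^{-1}(t)\subset\{0,1\}^{\N}$ is infinite, hence has an accumulation point, which lies in $\mu^{-1}(t)$; call it $A$, and fix pairwise distinct $A_n\in\mu^{-1}(t)\setminus\{A\}$ with $A_n\to A$. First I would note that for each $N\in\N$ the set $S_N:=\{n:A_n\cap[1,N]=A\cap[1,N]\}$ is cofinite, because $A_n\to A$ in the product topology. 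For $n\in S_N$ put $A_n':=A_n\setminus[1,N]\subset(N,\infty)$; then $\mu(A_n')=\mu(A_n)-\mu(A_n\cap[1,N])=\mu(A)-\mu(A\cap[1,N])=:\rho_N$, a quantity not depending on $n\in S_N$, and the map $n\mapsto A_n'$ is injective on $S_N$ since all $A_n$ with $n\in S_N$ have the same intersection with $[1,N]$. Hence $\rho_N=\mu(B)$ for $\omega$ many distinct sets $B\subset(N,\infty)$.

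Next I would combine this with $\rho_N\leq r_N=\sum_{k>N}x_k\to 0$. Given $G\in Fin$ and $\varepsilon>0$, choose $N$ with $G\subset[1,N]$ and $\rho_N<\varepsilon$, and set $s:=\mu(G)+\rho_N$. Since $G$ is disjoint from each $A_n'$ ($n\in S_N$), the sets $G\cup A_n'$ are pairwise distinct and satisfy $\mu(G\cup A_n')=\mu(G)+\rho_N=s$, so $f_\mu(s)\geq\omega$, that is $f_\mu(s)\in\{\omega,\mathfrak{c}\}$; moreover $|s-\mu(G)|=\rho_N<\varepsilon$. As $G\in Fin$ and $\varepsilon>0$ were arbitrary and $\{\mu(G):G\in Fin\}$ is dense in $\on{rng}(\mu)$, this shows $f_\mu^{-1}(\{\omega,\mathfrak{c}\})$ is dense in $\on{rng}(\mu)$.

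The main point to get right is the route to cardinality $\omega$. The tempting first move is to extract from $f_\mu(t)\geq\omega$ a sequence of pairwise disjoint ``swap pairs'' $E_n,F_n$ with $E_n\cap F_n=\emptyset$ and $\mu(E_n)=\mu(F_n)$ and then quote the Cantor-set construction given just before the proposition; but the sets $A_n\setminus A$ and $A\setminus A_n$ can be infinite and overlap one another, so pairwise disjointness genuinely fails and this approach stalls. Routing everything through a single accumulation point $A$ and cutting at initial segments $[1,N]$ sidesteps the difficulty: the cofiniteness of $S_N$ together with the common value of $A_n\cap[1,N]$ delivers both the shared tail value $\rho_N$ and the injectivity of $n\mapsto A_n'$ at once, and since $\rho_N\to 0$ the resulting $\omega$-values cluster at every finitely supported value simultaneously.
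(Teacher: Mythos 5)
Your proof is correct and follows essentially the same route as the paper's: both isolate a single set $A\in\mu^{-1}(t)$ (you via an accumulation point of the infinite compact set $\mu^{-1}(t)$, the paper via an inductive pigeonhole construction of a branch $\alpha$) whose initial segments are shared by infinitely many members of $\mu^{-1}(t)$, so that the tail values $\mu(A\setminus[1,N])\to 0$ each admit infinitely many representations inside $(N,\infty)$, and then both translate these small values by the dense family of finite sums. Your explicit arrangement $G\subset[1,N]$ with all witnessing sets contained in $(N,\infty)$ handles the disjointness in the translation step a bit more carefully than the paper's write-up, but the underlying argument is the same.
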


\begin{proof}
Assume that $f(t)=\omega$, that is $\vert \{A\subset\mathbb{N} : \sum_{n\in A}x_n=t\}\vert=\omega$. Hence at least one set of the form $\{A\subset\mathbb{N} : 1\in A, \sum_{n\in A}x_n=t\}$ or $\{A\subset\mathbb{N} : 1\notin A, \sum_{n\in A}x_n=t\}$ is of cardinality $\omega$. Inductively we define a 0-1 sequence $\alpha\in\{0,1\}^\N$ such that for any $k$ the family
$$
\Big\{A\subset\N:\sum_{n\in A}x_n=t\text{ and }\forall n\leq k(x_n\in A\iff\alpha(n)=1)\Big\}
$$
has cardinality $\omega$. Let $\{n_1<n_2<\dots\}$ be an increasing enumeration of $\{i\in\N:\alpha(i)=1\}$. Then for any $k$ 
$$
\Big\vert\Big\{A\subset\mathbb{N} : n_1, n_2, \ldots, n_k\in A \text{ and } \sum_{n\in A}x_n=t\Big\}\Big\vert=\omega\iff
$$
$$
\Big\vert\Big\{A\subset\mathbb{N}\setminus\{n_1,\dots,n_k\} : \sum_{n\in A}x_n=t-\sum_{i=1}^{k}x_{n_i}\Big\}\Big\vert=\omega\iff f\Big(t-\sum_{i=1}^{k}x_{n_i}\Big)=\omega.
$$
Let $(\varepsilon_i)_{i=1}^m\in\{0,1\}^m$. There is $k$ such that $n_k\geq m$ and $t-\sum_{i=1}^{k}x_{n_i}<\varepsilon$. For any $A\subset\N\setminus\{n_1,\dots,n_k\}$ with $\sum_{n\in A}x_n=t-\sum_{i=1}^{k}x_{n_i}$ the element $\sum_{i=1}^m\varepsilon_ix_i+\sum_{n\in A}x_n$ is $\varepsilon$-closed to $\sum_{i=1}^m\varepsilon_ix_i$ and $f(\sum_{i=1}^m\varepsilon_ix_i+\sum_{n\in A}x_n)\geq\omega$. Since the set of elements of the form $\sum_{i=1}^m\varepsilon_ix_i$ is dense in $\text{rng}(\mu)$, we obtain the assertion. Clearly the same argument works if $f(t)=\mathfrak{c}$. 
\end{proof}

We do not know if $f(t)=\omega$ implies that there is $t'$ with $f(t')=\mathfrak{c}$. We also do not know if $f^{-1}(\omega)\neq\emptyset$ implies that $f^{-1}(\omega)$ is dense in $\on{rng}(\mu)$. 
\begin{problem}
Let $\sum_{n=1}^{\infty}x_n$ be an absolutely convergent series and $f^{-1}(\omega)\neq\emptyset$. Is it true that $f^{-1}(\mathfrak{c})\neq\emptyset$? Is it true that $f^{-1}(\omega)$ is dense in $\on{rng}(\mu)$?
\end{problem}

In paper \cite{Erdos}, the authors considered geometric series for which some value is obtained by exactly $\omega$ subseries. All but one of the subseries was finite. In the next Propositon we show that infinite subseries always appears.
\begin{proposition}
Let $\mu$ be a measure. If $f(t)=\omega$, then there exists an infinite $A\subset\mathbb{N}$ with $\mu(A)=t$.
\end{proposition}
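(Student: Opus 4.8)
The plan is to argue by contradiction: assume that \emph{every} $A\subset\mathbb{N}$ with $\mu(A)=t$ is finite, and derive a contradiction from the fact that, since $f(t)=\omega$, there are infinitely many such sets. The only structural input needed is the observation already recorded in the introduction, namely that $\mu$ is a continuous map from the Cantor space $\{0,1\}^{\mathbb{N}}$ to $\mathbb{R}$, so that $W(t):=\mu^{-1}(t)=\{A\subset\mathbb{N}:\mu(A)=t\}$ is a closed, hence compact, subset of $\{0,1\}^{\mathbb{N}}$.

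First I would use compactness. Since $f(t)=\omega$, the set $W(t)$ is (countably) infinite, so being an infinite subset of the compact metrizable space $\{0,1\}^{\mathbb{N}}$ it has an accumulation point; as $W(t)$ is closed, this accumulation point $C$ lies in $W(t)$, i.e. $\mu(C)=t$. If $C$ is infinite we are done, so under the contradiction hypothesis $C$ is a finite set. I can then choose a sequence $(A_j)$ of pairwise distinct elements of $W(t)$ with $A_j\to C$, and after discarding at most one term assume $A_j\neq C$ for all $j$.

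Next I would localize. Fix $m\in\mathbb{N}$ with $C\subseteq[1,m]$. Convergence $A_j\to C$ in the product topology means that for all $j$ large enough $A_j\cap[1,m]=C\cap[1,m]=C$; for such $j$ this forces $C\subseteq A_j$ and $A_j\setminus C\subseteq(m,\infty)$, while $A_j\setminus C\neq\emptyset$ because $A_j\neq C$. Since $\mu$ is additive and every $x_n>0$, we get
\[
t=\mu(A_j)=\mu(C)+\mu(A_j\setminus C)>\mu(C)=t,
\]
a contradiction. Hence some $A\in W(t)$ must be infinite, which is the assertion.

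I do not anticipate a real obstacle; the two things to be careful about are extracting the accumulation point inside $W(t)$ (which is exactly where closedness of $\mu^{-1}(t)$ is used) and disposing of the harmless case where the limit coincides with one of the $A_j$. If one prefers to avoid invoking compactness directly, the same argument can be run by hand: for each $n$ pick, by pigeonhole, a value $S_n$ of $A\cap[1,n]$ attained by infinitely many $A\in W(t)$, arranging $S_n\subseteq S_{n+1}$ and the families $\{A\in W(t):A\cap[1,n]=S_n\}$ to be nested and infinite; put $C=\bigcup_n S_n$, note that continuity of $\mu$ forces $\mu(C)=t$, and conclude via the same dichotomy ($C$ infinite, or $C$ finite and a contradiction from positivity of the $x_n$). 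The argument in fact only uses that $W(t)$ is infinite, so it applies verbatim when $f(t)=\mathfrak{c}$ as well.
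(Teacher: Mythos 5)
Your proof is correct and follows essentially the same route as the paper: compactness of $\mu^{-1}(t)$ in the Cantor space yields an accumulation point, and positivity of the atoms rules out the limit being finite via the inequality $t=\mu(C)<\mu(C)+\mu(A_j\setminus C)=\mu(A_j)=t$. Your organization as a single contradiction is in fact a little cleaner than the paper's, which splits into cases and passes to a subsequence with $\vert A_n\vert\geq n$ before running the same localization-to-$[1,m]$ argument.
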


\begin{proof}
Let $F:\{0,1\}^\N\to\R$ be given by $F(\varepsilon_n)=\sum_{n=1}^\infty\varepsilon_n x_n$ where $x_n:=\mu(\{n\})$. As we have mentioned in the Introduction the function $F$ is continuous. Therefore $F^{-1}(t)$ is closed in the compact Cantor space $\{0,1\}^{\mathbb{N}}$. Since $F^{-1}(t)$ is infinite compact, it has an accumulation point $A$, that is there is a one-to-one sequence $(A_n)\subset\{0,1\}^\N$ such that $F(A_n)=t$ and $A_n$ tends to $A$ in $\{0,1\}^\N$. If there exists $n$ with $\vert A_n\vert=\omega$, then we are done. Otherwise $\vert A_n\vert<\omega$ for every $n\in\N$. We will show that $\vert A\vert=\omega$. Note that there are $2^m$ subsets of $\N$ with cardinality $\leq m$. Since the sequence $(A_n)$ is one-to-one, all but finitely many $A_n$'s have cardinality $>m$. Passing to a subsequence we may assume that $\vert A_n\vert\geq n$ for every $n\in\N$. For a fixed $m$ there is $k$ such that $A\cap[1,m]=A_n\cap[1,m]$ for every $n\geq k$. Suppose that there is $m$ with $A\subset[1,m]$. We can find $n>m$ such that $A=A_n\cap[1,m]$. Then there is $l>m$ with $l\in A_n$. We have
$$
t=\mu(A)=\mu(A_n\cap[1,m])<\mu(A_n\cap[1,m])+\mu(\{l\})\leq\mu(A_n)=t
$$
which yields a contradiction. Hence $A$ is infinite.
\end{proof}

By Proposition \ref{podstawy} we know that $\mu(\emptyset)$ and $\mu(\N)$ are uniquely obtained. As shows the Example \ref{skrajne} there is a measure $\mu$ with $\mu(\emptyset)$ and $\mu(\N)$ are the only such values. 
Sequences $(x_n)$ for which $A(x_n)$ is an interval (so called interval filling sequence) and its set of uniqueness consists of $\{0,\sum x_n\}$ only, have already been studied in \cite{DJK}, \cite{DK} and \cite{DKS}; such sequences are called plentiful in  \cite{DKS}. The authors also considered a sequence $(x_n)$ (or a series $\sum_{n=1}^{\infty}x_n$) which is a locker, that is $A((x_n)_{n\neq N})$ is an interval for each $N\in\mathbb{N}$.  In \cite{DJK} it is proved that each locker is plentiful. Note that the series defined in the Example \ref{skrajne} is a locker, so that it is also plentiful. Using simple observations about quasiregular expansions from \cite{LS} we are able to show some deeper idea staying behind Example \ref{skrajne}. An expansion $x=\sum_{n=1}^{\infty}\varepsilon_n x_n$ is quasiregular if and only if $(\varepsilon_n)$ is defined in the following way: having defined $\varepsilon_i$ for $i\in[1,n-1]$ we put $\varepsilon_n=1$ if and only if $\sum_{k=1}^{n-1}\varepsilon_k x_k +x_n<x$. The authors of \cite{DJK} observed that for every interval filling sequence of positive terms $(x_n)$ every $x\in (0,\sum_{n=1}^{\infty}x_n]$ has an infinite quasiregular expansion.  It gives a universal way of transforming any interval filling sequence into  locker with all but two points obtained for $\mathfrak{c}$-many expansions.
\begin{corollary}
Let $(x_n)$ be an interval filling, positive sequence. Then $(y_n)$ defined as $y_{2n}=y_{2n-1}=x_n$ for each $n\in\mathbb{N}$ is locker and for the cardinal function of $(y_n)$ we have $f(y)=\mathfrak{c}$ for all $y\in (0,2\sum_{n=1}^{\infty}x_n)$.
\end{corollary}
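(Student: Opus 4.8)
Throughout write $\Sigma:=\sum_{n=1}^\infty x_n$, so that $\sum_{n=1}^\infty y_n=2\Sigma$, and note that the subsequences $(y_{2n-1})_{n\in\N}$ and $(y_{2n})_{n\in\N}$ each coincide, term by term, with $(x_n)_{n\in\N}$. The plan is to deduce both claims from this doubling, using two facts already available: the symmetry $f(y)=f\big(\sum_n y_n-y\big)$ from Proposition \ref{podstawy}(ii), and the result of \cite{DJK} that, for an interval filling positive sequence, every point of $(0,\Sigma]$ has an \emph{infinite} expansion.

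\emph{$(y_n)$ is a locker.} Fix $N\in\N$ and let $m$ be the index with $y_N=x_m$, i.e.\ $N\in\{2m-1,2m\}$. The terms $(y_n)_{n\neq N}$ decompose, along two disjoint index sets, into one copy of each $x_k$ ($k\in\N$) and one copy of each $x_k$ ($k\neq m$); hence, with $S:=A\big((x_k)_{k\neq m}\big)$ (a compact set by Theorem \ref{Kakeya}(i)), one has $A\big((y_n)_{n\neq N}\big)=A(x_n)+S=[0,\Sigma]+S$, a Minkowski sum. Splitting each subset of $\N$ according to whether it contains $m$ shows $A(x_n)=S\cup(S+x_m)$, so the interval $[0,\Sigma]$ is covered by $S$ and its translate $S+x_m$. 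I would then check that this forces every bounded component of the complement of $S$ to have length at most $x_m$: if $(p,q)$ were such a component with $q-p>x_m$, then $(p,q)\subseteq[0,\Sigma]$ together with $(p,q)\cap S=\emptyset$ give $(p,q)\subseteq S+x_m$, hence $[p-x_m,q-x_m]\subseteq S$, yet $q-x_m\in(p,q)$, contradicting $(p,q)\cap S=\emptyset$. Since $x_m\le\Sigma$ and $\max S=\Sigma-x_m$, a short computation gives $[0,\Sigma]+S=[0,2\Sigma-x_m]$, an interval. In particular $(y_n)$ is interval filling with $A(y_n)=[0,2\Sigma]$.

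\emph{The cardinal function.} Fix $y\in(0,\Sigma]$. By \cite{DJK} there is an infinite $F\subseteq\N$ with $y=\sum_{n\in F}x_n$. As $x_n=y_{2n-1}=y_{2n}$, for each $J\subseteq F$ the set
\[
B_J:=\{2n-1:n\in F\setminus J\}\cup\{2n:n\in J\}
\]
satisfies $\sum_{k\in B_J}y_k=\sum_{n\in F}x_n=y$, and $J\mapsto B_J$ is injective; since $F$ is infinite, $|\{B_J:J\subseteq F\}|=\Co$, so $f(y)=\Co$. If $y\in[\Sigma,2\Sigma)$, then $2\Sigma-y\in(0,\Sigma]$ and Proposition \ref{podstawy}(ii) yields $f(y)=f(2\Sigma-y)=\Co$. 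Hence $f(y)=\Co$ for every $y\in(0,2\Sigma)$.

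The one step that is not pure bookkeeping is the gap estimate for $S$: interval filling of $(x_n)$ does \emph{not} by itself make $(x_n)$ a locker, and the argument succeeds only because doubling lets us add the full interval $[0,\Sigma]$, whose length dominates the worst possible gap $x_m$ of $S$. I expect the small covering lemma behind it — an interval covered by two translates of a closed set differing by $d$ has all complementary gaps $\le d$, and then $[0,a]+S=[0,a+\max S]$ whenever $0\in S$ has all gaps $\le a$ — to be where the (brief) care is needed; everything else is the doubling bookkeeping together with the two quoted facts.
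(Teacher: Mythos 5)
Your proof is correct and follows exactly the route the paper intends: the corollary is stated there without a formal proof, being justified only by the preceding remark that, by \cite{DJK}, every point of $(0,\sum_n x_n]$ has an infinite expansion with respect to $(x_n)$, which the doubling $y_{2n-1}=y_{2n}=x_n$ then converts into $2^{\omega}=\mathfrak{c}$ many expansions, with Proposition \ref{podstawy}(ii) transferring the conclusion to $[\sum_n x_n,\,2\sum_n x_n)$ --- precisely your argument. Your verification of the locker property, which the paper omits entirely, is also sound, though the careful gap bound $q-p\le x_m$ for $S=A((x_k)_{k\neq m})$ is more than is needed: since $S\subseteq[0,\Sigma-x_m]$ every complementary gap of $S$ already has length less than $\Sigma$, which by itself forces $[0,\Sigma]+S=[0,2\Sigma-x_m]$ to be an interval.
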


\begin{problem}
Is it true that the existence of a sequence $(n_k)\to\infty$ such that  $f^{-1}(n_k)\neq\emptyset$ for all $k\in\mathbb{N}$ implies that $f^{-1}(\{\omega,\mathfrak{c}\})\neq\emptyset$ ? Note that by Proposition \ref{wszystkobezomegi} we know that it may happen that $f^{-1}(\omega)=\emptyset$.
\end{problem}

\begin{problem}
Does $f^{-1}(\omega)\neq\emptyset$ (or the conjunction $f^{-1}(\omega)\neq\emptyset$  and $f^{-1}(\mathfrak{c})\neq\emptyset$) implies the existence of a sequence $(n_k)\to\infty$ such that  $f^{-1}(n_k)\neq\emptyset$ for all $k\in\mathbb{N}$. Note that by Example \ref{skrajne} the condition $f^{-1}(\mathfrak{c})\neq\emptyset$ does not imply the assertion.
\end{problem}

\section{Kakeya like condition}

In Theorem \ref{Kakeya}, attributed to Kakeya, the properties of $\on{rng}(\mu)$($=A(x_n)$) depend on the relation between terms $x_n=\mu(\{n\})$ and remainders $r_n=\sum_{k>n}x_k=\mu((n,\infty))$. We show that the quick convergence, that is $x_n>r_n$ for every $n$, implies that each point of $\on{rng}(\mu)$ is uniquely obtained; slow convergence, that is $x_n\leq r_n$ for all but finitely many $n$'s, implies that the set of uniqueness has an empty interior while $\on{rng}(\mu)$ is a union of finitely many closed intervals.

Let us start from prescribing a class of measures for which every values is obtained in the unique way. 

\begin{proposition}\label{szybkozbiezne}
Let $\mu$ be a measure and let $x_n:=\mu(\{n\})$. If a series $\sum_{n=1}^\infty x_n$ is quickly convergent, i.e. $\mu(\{n\})>\mu(\{n+1,n+2,\dots\})$ for every $n\in\N$, then each point of $\on{rng}(\mu)$ has unique representation. 
\begin{proof}
Suppose that $x=\sum_{n=1}^\infty\varepsilon_nx_n=\sum_{n=1}^\infty\delta_nx_n$ for two distinct sequences $(\varepsilon_{n})_{n=1}^{\infty}\in\{0,1\}^{\mathbb{N}}$ and $(\delta_{n})_{n=1}^{\infty}\in\{0,1\}^{\mathbb{N}}$. Let $k$ be the smallest natural number such that $\varepsilon_{k}\neq\delta_k$. Without losing generality let $\varepsilon_{k}=1$ and $\delta_k=0$. Then $0=x-x=\sum_{n=1}^\infty\varepsilon_nx_n-\sum_{n=1}^\infty\delta_nx_n=\sum_{n=1}^\infty(\varepsilon_n-\delta_n)x_n=x_k-\sum_{n=k+1}^\infty(\varepsilon_n-\delta_n)x_n\geq x_k-\sum_{n=k+1}^\infty x_n>0$, which yields a contradiction.
\end{proof}
\end{proposition}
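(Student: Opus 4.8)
The plan is to establish uniqueness by contradiction, via a \emph{first-difference} argument on the binary codings of a point. Suppose, contrary to the claim, that some $x\in\on{rng}(\mu)$ admits two distinct representations; that is, there are sequences $(\varepsilon_n),(\delta_n)\in\{0,1\}^{\mathbb N}$ with $(\varepsilon_n)\neq(\delta_n)$ and $\sum_{n=1}^{\infty}\varepsilon_nx_n=\sum_{n=1}^{\infty}\delta_nx_n=x$. Since the two sequences differ, the set $\{n:\varepsilon_n\neq\delta_n\}$ is nonempty; let $k$ be its least element, and, after possibly interchanging the two sequences, assume $\varepsilon_k=1$ and $\delta_k=0$.

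Next I would subtract the two representing series. Because $\sum x_n$ has positive terms and converges, both series converge absolutely, so $\sum_{n=1}^{\infty}(\varepsilon_n-\delta_n)x_n$ converges and equals $0$; by the minimality of $k$ every term of index $<k$ vanishes, whence $0=x_k+\sum_{n>k}(\varepsilon_n-\delta_n)x_n$, i.e.\ $x_k=-\sum_{n>k}(\varepsilon_n-\delta_n)x_n$. Taking absolute values and using $|\varepsilon_n-\delta_n|\le 1$ and $x_n>0$ yields $x_k\le\sum_{n>k}x_n=r_k=\mu(\{k+1,k+2,\dots\})$.

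This is the crux: the quick-convergence hypothesis is exactly $x_k>r_k$ for every $k$, so the inequality $x_k\le r_k$ is impossible. The contradiction shows that no point of $\on{rng}(\mu)$ can have two distinct representations, i.e.\ every point is uniquely obtained.

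I do not expect a genuine obstacle here, since this is essentially the classical uniqueness argument for non-integer base expansions specialized to the digit set $\{0,1\}$. The only point worth a sentence is the manipulation of the infinite difference series, which is justified by absolute convergence; alternatively one can avoid signed series entirely by working with the partial sums $\sum_{n=1}^{N}(\varepsilon_n-\delta_n)x_n$ and letting $N\to\infty$, bounding the tail by $r_k$ at each stage.
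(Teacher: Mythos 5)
Your argument is correct and is essentially the same as the paper's: both proofs locate the first index $k$ where the two codings differ, subtract the series, and bound the tail by $r_k=\sum_{n>k}x_n$ to contradict $x_k>r_k$. Your remark about justifying the manipulation via absolute convergence (or via partial sums) is a welcome touch of care that the paper leaves implicit, but it does not change the route.
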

As shows the next example, there exists a series $\sum_{n=1}^\infty x_n$, which is not quickly convergent, but the assertion of Proposition \ref{szybkozbiezne} remains true. It means that one cannot reverse Proposition \ref{szybkozbiezne}.
\begin{example}\label{nieszybkozbiezny}\emph{
Let $x_1:=\frac{27}{32}$ and $x_n:=\frac{3}{4^{n-1}}$ for $n\geq 2$. The series is not quickly convergent, since $x_1=\frac{27}{32}<1=\sum_{n=2}^\infty x_n$. On the other hand the series  $\sum_{n=2}^\infty x_n$ is quickly convergent so for $y_n=x_{n+1}$ every point of the achievement set $A(y_n)$ is obtained in the unique way. Note that $A(y_n)\subset [0,\frac{2}{32}]\cup [\frac{6}{32},\frac{8}{32}]\cup [\frac{24}{32},\frac{26}{32}]\cup [\frac{30}{32},\frac{32}{32}]$. Thus $x_1+A(y_n)\subset [\frac{27}{32},\frac{29}{32}]\cup [\frac{33}{32},\frac{35}{32}]\cup [\frac{51}{32},\frac{53}{32}]\cup [\frac{57}{32},\frac{59}{32}]$, so $A(y_n)\cap(x_1+A(y_n))=\emptyset$ (see the picture). Hence $f^{-1}(\{1\})=A(y_n)\cup (x_1+A(y_n))=A(x_n)$.}

\end{example}
\begin{tikzpicture}[>=stealth,thick] 
 \draw[-] (0,0)--(8,0);
 \draw (0,0.2)--(0,0) node[below]{$0$};
 \draw (0.5,0.2)--(0.5,0) node[below]{$\frac{2}{32}$};
 \draw (1.5,0.2)--(1.5,0) node[below]{$\frac{6}{32}$};
 \draw (2,0.2)--(2,0) node[below]{$\frac{8}{32}$};
 \draw (6,0.2)--(6,0) node[below]{$\frac{24}{32}$};
 \draw (6.5,0.2)--(6.5,0) node[below]{$\frac{26}{32}$};
 \draw (7.5,0.2)--(7.5,0) node[below]{$\frac{30}{32}$};
 \draw (8,0.2)--(8,0) node[below]{$1$};
 \draw (9,0.2)--(9,0.2) node[below]{$A(y_n)$};
\draw[-] (0,0.2)--(0.5,0.2)[color=red];
\draw[-] (1.5,0.2)--(2,0.2)[color=red];
\draw[-] (6,0.2)--(6.5,0.2)[color=red];
\draw[-] (7.5,0.2)--(8,0.2)[color=red];
\end{tikzpicture}
\\
\begin{tikzpicture}[>=stealth,thick]
 \draw[-] (-0.1,0)--(15.25,0);
 \draw (7.25,0.2)--(7.25,0) node[below]{$\frac{27}{32}$};
 \draw (7.75,0.2)--(7.75,0) node[below]{$\frac{29}{32}$};
 \draw (8.75,0.2)--(8.75,0) node[below]{$\frac{33}{32}$};
 \draw (9.25,0.2)--(9.25,0) node[below]{$\frac{35}{32}$};
 \draw (13.25,0.2)--(13.25,0) node[below]{$\frac{51}{32}$};
 \draw (13.75,0.2)--(13.75,0) node[below]{$\frac{53}{32}$};
 \draw (14.75,0.2)--(14.75,0) node[below]{$\frac{57}{32}$};
 \draw (15.25,0.2)--(15.25,0) node[below]{$\frac{59}{32}$};
\draw (16.5,0.2)--(16.5,0.2) node[below]{$x_1+A(y_n)$};
\draw[-] (7.25,0.2)--(7.75,0.2)[color=red];
\draw[-] (8.75,0.2)--(9.25,0.2)[color=red];
\draw[-] (13.25,0.2)--(13.75,0.2)[color=red];
\draw[-] (14.75,0.2)--(15.25,0.2)[color=red];
\end{tikzpicture}

Now we consider the case when $\on{rng}(\mu)$ is a closed interval or a finite union of closed intervals. It turns out that the set of uniqueness is topologically small. 
\begin{theorem}\label{przedzial}
Let $\mu$ be a measure and let $x_n:=\mu(\{n\})$. Assume that $\on{rng}(\mu)$ is a closed interval. Then $f_\mu(x_{k_1}+x_{k_2}+\dots+x_{k_m})\geq 2$ for every finite set $k_1<k_2<\dots<k_m$ of indexes. In particular the set of uniqueness of $\mu$ has an empty interior.
\end{theorem}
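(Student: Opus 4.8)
The plan is to route the argument through the fact, recalled just before the theorem from \cite{DJK}, that an interval filling sequence of positive terms admits an \emph{infinite} quasiregular expansion for every point of $(0,\sum_n x_n]$. So first I would identify $\on{rng}(\mu)$ exactly: since $0=\mu(\emptyset)$ and $S:=\sum_{n=1}^\infty x_n=\mu(\N)$ both lie in $\on{rng}(\mu)$, and $\on{rng}(\mu)\subseteq[0,S]$, a closed interval meeting these constraints can only be $[0,S]$. Hence $\on{rng}(\mu)=A(x_n)=[0,S]$; this interval is nondegenerate because $x_1>0$, so $(x_n)$ is interval filling in the sense of \cite{DJK}, and of course all terms are positive.

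Next, fix a finite set $F=\{k_1<k_2<\dots<k_m\}$ with $m\ge1$ and put $t:=\mu(F)=\sum_{n\in F}x_n$. Then $t>0$ (positive terms and $m\ge1$) and $t<S$ (since $\N\setminus F$ is infinite and every $x_n>0$), so $t\in(0,S]$. By the quoted \cite{DJK} fact, $t$ has an infinite quasiregular expansion $t=\sum_{n=1}^\infty\varepsilon_nx_n$; set $G:=\{n\in\N:\varepsilon_n=1\}$. Then $\mu(G)=t=\mu(F)$, while $G\ne F$ for the trivial reason that $G$ is infinite and $F$ is not. Consequently $f_\mu(t)=\vert\mu^{-1}(t)\vert\ge2$, which is the asserted inequality.

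For the ``in particular'' clause I would use density. The set $D:=\{\mu(A):A\in Fin\}$ is dense in $\on{rng}(\mu)=[0,S]$, as noted in the introduction, and since $[0,S]$ has no isolated points the smaller set $D\setminus\{0\}$ is dense as well. By the previous paragraph every element of $D\setminus\{0\}$ is a $\mu$-value with at least two preimages, so $D\setminus\{0\}$ is disjoint from the set of uniqueness $f_\mu^{-1}(1)$. A subset of $[0,S]$ that is disjoint from a dense set has empty interior, hence $f_\mu^{-1}(1)$ has empty interior.

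I do not expect a genuine obstacle; the only steps needing a word of justification are that a closed interval compatible with the above data must equal $[0,S]$, and that deleting the single point $0$ from $D$ preserves density (which uses $x_1>0$). If one wanted to avoid invoking \cite{DJK} and stay inside the theme of this section, one could instead try to exhibit $G$ by hand: by Theorem \ref{Kakeya}(iii) there is $N$ with $x_n\le r_n$ for all $n\ge N$, so $A((x_i)_{i>k_m})=[0,r_{k_m}]$ as soon as $k_m\ge N$, and then $x_{k_m}$ can be re-expanded within the tail to build a $G\ne F$ with the same sum. The delicate case is when $F$ lies entirely below the Kakeya threshold $N$, where this local surgery is unavailable and a more global argument is needed — which is precisely why the quasiregular expansion gives the cleanest route.
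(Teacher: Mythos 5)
Your proof is correct, but it takes a different route from the paper's. You reduce everything to the fact, quoted in Section 2 from \cite{DJK}, that an interval filling sequence gives every $x\in(0,\sum_n x_n]$ an \emph{infinite} quasiregular expansion; since $\mu(F)$ for a nonempty finite $F$ lies in $(0,\sum_n x_n)$ and has a finite representation, the infinite expansion is automatically a second one, and the density argument finishes. The paper instead proves the needed re-expansion by hand: from the range being an interval it extracts slow convergence ($x_n\le r_n$ for \emph{every} $n$) and runs a greedy block construction producing a representation of $x_{k_m}$ supported entirely in $(k_m,\infty)$; the second representation of $x_{k_1}+\dots+x_{k_m}$ is then obtained by keeping $x_{k_1},\dots,x_{k_{m-1}}$ and re-expanding only the last term. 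Your version is shorter and cleaner but outsources the analytic content to \cite{DJK}; the paper's is self-contained and its ``local surgery at the largest index'' is what makes Corollary \ref{sumofintervals} (the finite-union-of-intervals case) an immediate consequence, since there one must work inside a tail $\sum_{n\ge m}x_n$ --- your argument adapts to that setting equally well by applying it to the tail series. One small remark on your closing caveat: the ``delicate case'' you worry about in the hand-built alternative does not actually arise here, because for a single interval and a non-increasing positive sequence the inequality $x_n\le r_n$ holds for all $n$, not merely for $n$ beyond a Kakeya threshold; that is exactly the observation on which the paper's proof rests.
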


\begin{proof}
Firstly we prove that $f(x_n)\geq 2$ for every $n\in\N$. Since $\on{rng}(\mu)$ is an interval, the sequence $(x_n)$ is slowly convergent. Fix $n\in\N$. If $x_n=r_n=\sum_{k=n+1}^\infty x_n$, then we are done. So we assume that $x_n<r_n$. Inductively we will define consecutive segments $F_1,F_2,\dots\subset[n+1,\infty)$ such that $\sum_{i=1}^\infty\sum_{k\in F_i}x_k=x_n$ as follows. 

Since $x_n<r_n$, there is $l>n$ with $\sum_{k=n+1}^l x_k\leq x_n<\sum_{k=n+1}^{l+1}x_k.$ Put $F_1:=[n+1,l]$. If $\sum_{k\in F_1}x_k=x_n$, then put $F_i:=\emptyset$ for $i\geq 2$ and we are done. Otherwise let $u:=\min\{i>F_1:\sum_{k\in F_1}x_k+x_i<x_n\}$. Note that $x_{u-1}\geq x_n-\sum_{k\in F_1}x_k>0$ and $x_{u-1}\leq r_{u-1}$. If $x_u=x_n-\sum_{k\in F_1}x_k$ and $x_{u-1}=r_{u-1}$, then $x_n=\sum_{k\in F_1}+r_{u-1}$ and we are done (put for example $F_j:=\{j+u-2\}$ for $j\geq 2$). Otherwise we continue the construction; there is $v\geq u$ such that
$$
\sum_{k\in F_1}x_k+\sum_{k=u}^v x_k\leq x_n<\sum_{k\in F_1}x_k+\sum_{k=u}^{v+1} x_k.
$$
Put $F_2:=[u,v]$. Now we proceed the construction as in the first step. By the construction we have $\vert\sum_{i=1}^k\sum_{j\in F_i}x_j-x_k\vert\leq x_{\max F_k+1}$. Therefore $\sum_{i=1}^\infty\sum_{k\in F_i}x_k=x_n$. 

Thus $f(x_n)\geq 2$. Now we show that $\sum_{k\in F}x_k$ has a unique representation provided $F$ is finite. Fix $F=\{k_1<k_2<\ldots<k_m\}$. By the first part of the proof we know that $x_{k_m}=\sum_{k\in A}x_k$ for some $A\subset(k_m,\infty)$. Hence $\sum_{k\in F}x_k=\sum_{i=1}^{m}x_{k_i}=\sum_{i=1}^{m-1}x_{k_i}+\sum_{k\in A}x_k$. Thus $f(\sum_{k\in F}x_k)\geq 2$. Note that $\{\sum_{k\in F}x_k : F\in Fin\}$ is a dense subset of $A(x_n)$. Hence the set of uniqueness has an empty interior.
\end{proof}

\begin{corollary}\label{sumofintervals}
Assume that $\on{rng}(\mu)$ is a finite union of closed interval, equivalently $x_n\leq r_n$ for all but finitely many $n$'s. Then the set of uniqueness has an empty interior.
\end{corollary}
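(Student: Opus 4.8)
The plan is to peel off a finite ``bad'' initial segment of the sequence and then invoke Theorem~\ref{przedzial} on the tail. By the equivalence recalled in the statement (Theorem~\ref{Kakeya}(iii)) there is $N\in\N$ with $x_n\le r_n$ for every $n\ge N$. First I would observe that the shifted sequence $(x_{N-1+m})_{m\ge 1}$ is nonincreasing and positive and satisfies $x_{N-1+m}\le\sum_{j>m}x_{N-1+j}$ for \emph{every} $m\ge 1$; hence by the classical Kakeya theorem (Theorem~\ref{Kakeya}) — here one uses that this inequality holds at \emph{all} indices of the tail, not merely cofinitely many — its achievement set $A':=A\big((x_k)_{k\ge N}\big)$ is the whole interval $[0,r_{N-1}]$. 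In particular $A'=\on{rng}(\nu)$ for the measure $\nu$ defined by $\nu(\{m\})=x_{N-1+m}$, so Theorem~\ref{przedzial} applies to $\nu$: the set of those $s\in A'$ admitting a unique representation $s=\sum_{k\in C}x_k$ with $C\subset[N,\infty)$ has empty interior.

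Next I would write $\on{rng}(\mu)=A(x_n)$ as the finite union
\[
\on{rng}(\mu)=\bigcup_{S\subset[1,N-1]}\big(\sigma_S+A'\big),\qquad \sigma_S:=\sum_{n\in S}x_n,
\]
obtained by splitting each representation $\sum_{n\in A}x_n$ according to $A\cap[1,N-1]$ and $A\cap[N,\infty)$. Suppose towards a contradiction that the set of uniqueness $U$ of $\mu$ contained a nonempty open interval $(a,b)$. Since each $\sigma_S+A'=[\sigma_S,\sigma_S+r_{N-1}]$ is a closed interval and there are only finitely many of them, one finds a nonempty open interval $(c,d)\subset(a,b)$ and an $S_0\subset[1,N-1]$ with $(c,d)\subset\sigma_{S_0}+A'$; then $(c,d)-\sigma_{S_0}$ is a nonempty open subinterval of $A'$. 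By the first paragraph there is $s\in(c,d)-\sigma_{S_0}$ with two distinct representations $s=\sum_{k\in C_1}x_k=\sum_{k\in C_2}x_k$ with $C_1\ne C_2$ and $C_1,C_2\subset[N,\infty)$. Put $t:=\sigma_{S_0}+s\in(c,d)\subset(a,b)\subset U$. Then $S_0\cup C_1$ and $S_0\cup C_2$ are two distinct subsets of $\N$ (they differ inside $[N,\infty)$ and both are disjoint from $S_0\subset[1,N-1]$) with $\mu(S_0\cup C_i)=\sigma_{S_0}+s=t$, so $f_\mu(t)\ge 2$, contradicting $t\in U$. Hence $U$ has empty interior.

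The heart of the argument — and its only subtle point — is the first step: converting the ``finite union of intervals'' situation into a genuine \emph{interval} situation for the tail, so that Theorem~\ref{przedzial} becomes applicable; everything afterwards is the routine ``shift'' bookkeeping above (the passage from ``$U$ has empty interior'' to ``no open interval lies in $U$'' being merely the definition of interior in $\R$). If one prefers to avoid citing the single-interval form of Kakeya's theorem, the same conclusion follows by showing directly, exactly as in the first part of the proof of Theorem~\ref{przedzial}, that for every $n\ge N$ the atom $x_n$ equals a subsum of $(x_k)_{k>n}$; this yields a second representation of every finite subsum $\sum_{k\in F}x_k$ with $F\cap[N,\infty)\ne\emptyset$, and since the remaining finite subsums form a finite set while $\on{rng}(\mu)$ has no isolated points, the non-uniquely-obtained finitely supported values remain dense in $\on{rng}(\mu)$.
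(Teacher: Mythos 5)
Your proposal is correct and follows essentially the same route as the paper: locate $N$ with $x_n\le r_n$ for all $n\ge N$, observe that the tail then satisfies the interval-filling condition at \emph{every} index so that Theorem~\ref{przedzial} applies to it, and transfer the non-uniqueness back to $\mu$. The only difference is cosmetic — the paper finishes by noting that the finite subsums $\sum_{k\in F}x_k$ with $F\cap[N,\infty)\neq\emptyset$ each acquire a second representation and form a dense subset of $\on{rng}(\mu)$, whereas your main route decomposes $\on{rng}(\mu)$ into the finitely many closed translates $\sigma_S+A'$ and uses a connectedness/Baire argument to land an open subinterval inside one translate; your closing paragraph in fact reproduces the paper's density argument verbatim as the alternative.
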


\begin{proof}
Let $x_n:=\mu(\{n\})$. By Theorem \ref{Kakeya} there exists $m\in\mathbb{N}$ such that $x_n\leq\sum_{k=n+1}^{\infty}x_{k}$ for each $n\geq m$. By Theorem \ref{przedzial} used to the series $\sum_{n=m}^\infty x_n$ we obtain $f(\sum_{k\in F}x_k)\geq 2$ provided $F$ is finite and $F\cap[m,\infty)\neq\emptyset$. Since $\{\sum_{k\in F}x_k : F\in Fin, F\cap[m,\infty)\neq\emptyset \}$ is dense in $\on{rng}(\mu)$, we obtain that the set of uniqueness has an empty interior.
\end{proof}

Although the set of uniqueness has an empty interior, it may be quite large as shows the following.

\begin{example}
\emph{
Let us consider a sequence $(y_n)$ given by $y_n:=\frac{1}{2^n}$. Then $A(y_n)=[0,1]$. Note that 
$$
f(t)=2\iff\exists m\exists(\varepsilon_i)_{i=1}^m\in\{0,1\}^m\Big(\varepsilon_m=1\text{ and } t=\sum_{i=1}^m\frac{\varepsilon_i}{2^i}=\sum_{i=1}^{m-1}\frac{\varepsilon_i}{2^i}+\sum_{i=m+1}^\infty\frac{1}{2^i}\Big)\iff
$$
$$
\iff t\in(0,1)\text{ has finite dyadic expansion.}
$$
Since other points in $(0,1)$ have unique dyadic expansions, the set of uniqueness is cocountable, and therefore it is comeager and conull in $[0,1]$. 
}
\end{example}

\begin{lemma}\label{tailequalterm} 
Let $\mu$ be a measure such that $\mu(\{n\})=\mu((n,\infty))$ for every $n\in\N$. Then\\ 
(i) $R(f_\mu)=\{1,2\}$;\\ 
(ii) $f_\mu(y)=2$ if and only if $y=\sum_{n\in A}x_n$ for some finite set $A$. 
\end{lemma}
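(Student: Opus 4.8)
The plan is to observe that the hypothesis pins $(x_n)$ down up to a positive scalar, after which the whole statement reduces to the classical theory of binary expansions. Writing $x_n=\mu(\{n\})$ and $r_n=\mu((n,\infty))$, the assumption says $x_n=r_n$ for all $n$; combining this with $r_n=x_{n+1}+r_{n+1}$ and $x_{n+1}=r_{n+1}$ yields $x_n=2x_{n+1}$, hence $x_n=2^{1-n}x_1$. Rescaling every atom by $x_1^{-1}$ scales $\on{rng}(\mu)$ by $x_1^{-1}$ but changes neither the cardinalities $f_\mu(t)$ nor the truth of (i)--(ii), so I may assume $x_1=1$. Then $x_n=2^{1-n}$, $\sum_{n=1}^\infty x_n=2$, and $\on{rng}(\mu)=A(x_n)=[0,2]$ (by Theorem \ref{Kakeya}(iii) it is a finite union of closed intervals, and since every $y\in[0,2]$ has a binary expansion it is all of $[0,2]$).

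Next I would translate the counting problem: a set $A\subset\N$ with indicator $(\varepsilon_n)\in\{0,1\}^\N$ satisfies $\mu(A)=y$ exactly when $\sum_{n=1}^\infty\varepsilon_n2^{1-n}=y$, i.e. $\sum_{n=1}^\infty\varepsilon_n2^{-n}=y/2$, so $f_\mu(y)$ equals the number of binary expansions of $z:=y/2\in[0,1]$. Then I would invoke the elementary count of binary expansions, which comes from the same first-difference argument as in Proposition \ref{szybkozbiezne}: if $z=\sum\varepsilon_n2^{-n}=\sum\delta_n2^{-n}$ and $k$ is the least index with $\varepsilon_k\ne\delta_k$, say $\varepsilon_k=1>0=\delta_k$, then $2^{-k}=\sum_{n>k}(\delta_n-\varepsilon_n)2^{-n}\le\sum_{n>k}2^{-n}=2^{-k}$, forcing $\varepsilon_n=0$ and $\delta_n=1$ for every $n>k$. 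Hence every $z$ has at most two expansions; it has exactly two precisely when it has a finite nonzero expansion, that is, when $z$ is a dyadic rational in $(0,1)$, and exactly one otherwise (in particular $z=0$ and $z=1$ have unique expansions).

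Finally I would read the conclusion off via $y=2z$. This gives $f_\mu(y)\le 2$ for all $y$, with $f_\mu(0)=f_\mu(2)=1$, and for $y\in(0,2)$: $f_\mu(y)=2$ iff $y/2$ is a dyadic rational in $(0,1)$, iff $y$ is a dyadic rational in $(0,2)$, iff $y=\sum_{n\in A}x_n$ for a (unique) nonempty finite $A\subset\N$, while every other $y\in(0,2)$ has $f_\mu(y)=1$. Since $f_\mu(1)=2$ (witnessed by $A=\{1\}$ and $A=\{2,3,4,\dots\}$) while $f_\mu(0)=1$, this simultaneously yields $R(f_\mu)=\{1,2\}$ and part (ii). I do not expect a genuine obstacle; the only points needing care are the edge cases in (ii) --- the representation $A=\emptyset$ gives $y=0$ with $f_\mu(0)=1$, so "finite set $A$" must be read as nonempty, and one checks that no finite $A$ sums to $2$ --- together with the routine verification that the finite subset sums $\sum_{n\in A}2^{1-n}$ ($A$ nonempty finite) are exactly the dyadic rationals in $(0,2)$.
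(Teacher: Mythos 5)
Your proposal is correct, and it takes a genuinely different (and shorter) route than the paper. Your key observation is that the hypothesis is rigid: $x_n=r_n$ together with $r_n=x_{n+1}+r_{n+1}$ forces $x_n=2x_{n+1}$, hence $x_n=2^{1-n}x_1$, so after rescaling the lemma is exactly the classical count of binary expansions of points of $[0,1]$ (at most two expansions, exactly two precisely for dyadic rationals in the open interval). The paper never makes this reduction; it argues abstractly with the condition $x_n=r_n$ throughout: it first produces the second representation of $\mu(A)$ for finite $A$ by replacing the last atom $x_{n_k}$ with the tail $(n_k,\infty)$, rules out a third representation by an induction on the least index where a competitor $C$ could differ, and then separately shows that $f(\mu(A))\geq 2$ forces $A$ to be finite or cofinite via the chain $0\geq x_k-r_k=0$ at the least element of the symmetric difference, whose equality case pins the two tails to $\N\setminus[1,k]$ and $\emptyset$. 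Both arguments ultimately rest on the same mechanism (least differing index plus the equality case of the tail estimate), but your reduction turns the bookkeeping into a citation of a standard fact, while the paper's phrasing is the one you would need if the hypothesis did not determine the sequence --- which here it does, so nothing is lost. You are also right to flag that part (ii) as stated needs $A$ to be read as nonempty (and, on the other side, that cofinite $A\neq\N$ also yields $f_\mu(\mu(A))=2$, consistent with (ii) since those values again have finite representations); the paper's proof implicitly handles this the same way.
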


\begin{proof}
Fix $A$ such that $\vert A\vert<\infty$. Let $A:=\{n_i: i\in [1,k]\}$ and consider $y:=\mu(A)$. As always put $x_n:=\mu(\{n\})$ and $r_n:=\mu((n,\infty))$. Then 
$$
y=x_{n_1}+x_{n_2}+ \ldots +x_{n_{k-1}}+x_{n_k}=x_{n_1}+x_{n_2}+ \ldots +x_{n_{k-1}}+r_{n_k}=x_{n_1}+x_{n_2}+ \ldots +x_{n_{k-1}}+\sum_{n=n_k+1}x_{n},
$$ 
and consequently $f(y)\geq 2$. 

Suppose that $f(y)>2$. Then there would be $C\subset\N$ such that $\mu(A)=\mu(B)=\mu(C)$, $C\neq A$ and $C\neq B$ where $B=(A\setminus\{n_k\})\cup(n_k,\infty)$. Let $C=\{t_i:i\in I\}$ where $I=\N$ or $I=[1,p]$ for some $p\in\N$. If $t_1<n_1$, then 
$$
\sum_{n\in C}x_n\geq x_{t_1}\geq \sum_{i>t_1}x_i>\sum_{n\in A}x_n
$$
which is a contradiction (the latter inequality holds if $k>1$, otherwise we can say only that $\sum_{i>t_1}x_i\geq\sum_{n\in A}x_n$; it is possible that $\sum_{i>t_1}x_i=\sum_{n\in A}x_n=x_{n_1}$, but then $C=[n_1+1,\infty)$; this means that $C=B$ and we obtain a contradiction as well). Similarly $t_1>n_1$ yields a contradiction. Thus $t_1=n_1$. Inductively one can show that $t_1=n_1,\dots,t_{k-1}=n_{k-1}$ and either $t_k=n_k$ or $t_{k+i}=n_k+i+1$ for every $i\geq 0$. Hence $C=A$ or $C=B$. This is a contradiction. Note that $f(\mu(A))=2$ also if $A$ is a confinite set of indexes. 

Let $A\subset\N$ and put $y:=\mu(A)$. We will show that $f(A)\geq 2$ implies that $A$ is either finite or cofinite. Assume that there exists $B\neq A$ such that  $\sum_{n\in A}x_n=\sum_{n\in B}x_n$. Denote $k=\min(A\Delta B)$ where $A\Delta B$ stands for the symmetric difference $(A\setminus B)\cup(B\setminus A)$ of sets $A$ and $B$. Hence 
$$
0=\Big\vert \sum_{n\in A}x_n-\sum_{n\in B}x_n\Big\vert\geq x_k- \Big\vert \sum_{n\in A_{k+1}}x_n-\sum_{n\in B_{k+1}}x_n\Big\vert\geq x_k-r_k=0,
$$
which means that $\vert\sum_{n\in A_{k+1}}x_n-\sum_{n\in B_{k+1}}x_n\vert=r_k$, so $A_{k+1}=\mathbb{N}\setminus [1,k]$ and $B_{k+1}=\emptyset$ or vice-versa. But if $A_{k+1}$ is cofinite, so is $A$, and if $A_{k+1}$ is empty, then $A$ is finite. 
\end{proof}

\begin{lemma}\label{boundedconditions}
Let $\sum_{n=1}^{\infty}x_{n}$ be a convergent series of positive terms and let $f$ be its cardinal function. Moreover let $f_k$ stand for the cardinal function of $\sum_{n=k}^{\infty}x_{n}$. The following assertions are equivalent: 
\begin{itemize}
\item[(i)] $f$ is bounded;
\item[(ii)] $f_k$ is bounded for all $k\in\mathbb{N}$;
\item[(iii)] $f_k$ is bounded for some $k\in\mathbb{N}$.
\end{itemize}
\end{lemma}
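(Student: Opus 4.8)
The plan is to connect $f_k$ with $f_{k+1}$ through a one-step recursion and then push boundedness both up and down the index $k$. Observe first that $f=f_1$, so statement (i) is literally ``$f_1$ is bounded''; hence it will suffice to show that if $f_{k_0}$ is bounded for one $k_0\in\mathbb{N}$, then $f_k$ is bounded for every $k\in\mathbb{N}$.

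The key identity I would establish is
\[
f_k(t)=f_{k+1}(t)+f_{k+1}(t-x_k),\qquad t\ge 0,
\]
with the conventions $f_{k+1}(s):=0$ for $s<0$ and $f_{k+1}(0)=1$. This comes from splitting the family $\{A\subseteq[k,\infty):\sum_{n\in A}x_n=t\}$ according to whether $k\notin A$ — such $A$ are exactly the subsets of $[k+1,\infty)$ summing to $t$ — or $k\in A$ — and then $A\mapsto A\setminus\{k\}$ is a bijection of these onto the subsets of $[k+1,\infty)$ summing to $t-x_k$; since all subset sums are nonnegative, the term $f_{k+1}(t-x_k)$ simply vanishes when $t<x_k$. (Interpreted as an equality of cardinals, this is valid even when the values are $\omega$ or $\mathfrak{c}$, but only the inequality consequences below will be needed.)

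From this identity I read off, for every $t$, the two inequalities $f_{k+1}(t)\le f_k(t)$ (discard the second summand; this also follows from the plain remark that every subset of $[k+1,\infty)$ is a subset of $[k,\infty)$) and $f_k(t)\le f_{k+1}(t)+f_{k+1}(t-x_k)\le 2\sup_s f_{k+1}(s)$. Hence $\sup_s f_{k+1}(s)\le\sup_s f_k(s)$ and $\sup_s f_k(s)\le 2\sup_s f_{k+1}(s)$, and since $2M$ is finite precisely when $M$ is, we conclude that for each $k$ the function $f_k$ is bounded if and only if $f_{k+1}$ is bounded.

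A two-sided induction then finishes the argument: assuming $f_{k_0}$ bounded, repeated use of $f_{k+1}\le f_k$ gives boundedness of $f_k$ for all $k\ge k_0$, and repeated use of $\sup f_k\le 2\sup f_{k+1}$ gives boundedness of $f_{k_0-1},f_{k_0-2},\dots,f_1$; thus $f_k$ is bounded for all $k$, in particular $f=f_1$ is bounded. This proves $(iii)\Rightarrow(ii)$; $(ii)\Rightarrow(i)$ and $(ii)\Rightarrow(iii)$ are trivial, and $(i)\Rightarrow(iii)$ follows by taking $k=1$. I do not expect a genuine obstacle here: the only points requiring a little care are the boundary conventions in the displayed identity, the passage to cardinal arithmetic when some $f_k$ attains $\omega$ or $\mathfrak{c}$ (in which case ``bounded'' fails anyway), and checking that multiplying by the constant $2$ does not destroy finiteness — which it does not.
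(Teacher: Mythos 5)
Your proof is correct and follows essentially the same route as the paper: the same decomposition of $\{A\subseteq[k,\infty):\sum_{n\in A}x_n=t\}$ according to whether the smallest index belongs to $A$, yielding $f_k(t)=f_{k+1}(t)+f_{k+1}(t-x_k)$, followed by a downward induction to reach $f_1=f$. You are somewhat more explicit than the paper about the boundary conventions and about why finiteness of each fiber upgrades to a uniform bound ($\sup f_k\le 2\sup f_{k+1}$), but the substance is identical.
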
 

\begin{proof}
$(i)\Rightarrow (ii)$. Since $A(x_n)\supset A((x_n)_{n\geq k})$ and $f_k\leq f$ on $A((x_n)_{n\geq k})$ we are done.
\\$(ii)\Rightarrow (iii)$. Obvious.
\\$(iii)\Rightarrow (i)$. Assume that $f_k$ is bounded. By the already proved implication $(i)\Rightarrow (ii)$ it is enough to show that $f$ is bounded. First, we show that $f_{k-1}$ is bounded. Fix $y\in A((x_n)_{n\geq k})$. Note that 
$$
\Big\{A\subset[k-1,\infty): \sum_{n\in A}x_n=y\Big\}=\Big\{A\subset [k,\infty): \sum_{n\in A}x_n=y\Big\}\cup \Big\{A\subset [k,\infty): \sum_{n\in A}x_n=y-x_{k-1}\Big\}
$$ 
is finite as a union of two finite sets. Hence $f_{k-1}$ is bounded. Using a simple induction, we show that $f_{k-2},\ldots,f_2,f_1=f$ are bounded. 
\end{proof}

\begin{theorem}\label{OnKakeyaCondition}
Let $\mu$ be a measure with $\mu(\{n\})\leq\mu((n,\infty))$. Then 
\begin{itemize}
\item[(i)] if $\mu(\{n\})<\mu((n,\infty))$ for finitely many $n\in\mathbb{N}$, then $f_\mu$ is bounded; 
\item[(ii)] if $\mu(\{n\})<\mu((n,\infty))$ for infinitely many $n\in\mathbb{N}$, then $f_\mu^{-1}(\{\omega,\mathfrak{c}\})\neq\emptyset$; 
\end{itemize}
\end{theorem}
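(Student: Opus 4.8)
The plan is to treat the two items separately; (i) is short, (ii) is the substantial one.

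\medskip

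\emph{Item (i).} The hypotheses $x_n\le r_n$ for all $n$ and $x_n<r_n$ for only finitely many $n$ give at once some $N$ with $x_n=r_n$ for every $n\ge N$. Then the tail series $\sum_{n\ge N}x_n$ satisfies the hypothesis of Lemma~\ref{tailequalterm} (each of its atoms equals its own remainder), so $R(f_N)=\{1,2\}$; in particular $f_N$ is bounded, and Lemma~\ref{boundedconditions}, implication $(iii)\Rightarrow(i)$, yields that $f_\mu$ is bounded. One may note in passing that $x_n=r_n$ for $n\ge N$ forces $x_{N+j}=x_N/2^{\,j}$, i.e.\ the tail is geometric with ratio $\tfrac12$, but this is not needed.

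\medskip

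\emph{Item (ii), easy sub-case.} Now $(x_n)$ is interval filling, so $\on{rng}(\mu)=[0,\sigma]$ with $\sigma=\sum x_n$, and $S:=\{m:x_m<r_m\}$ is infinite; I will aim at the stronger conclusion $f_\mu^{-1}(\mathfrak c)\ne\emptyset$. Suppose first that infinitely many atoms $x_m$ are \emph{finite} sums $\sum_{k\in B_m}x_k$ with $B_m\subset(m,\infty)$ (automatically then $m\in S$). Choosing such $m_1<m_2<\dots$ with $m_{i+1}>\max B_{m_i}$, the pairs $F_i:=\{m_i\}$, $G_i:=B_{m_i}$ lie in pairwise disjoint windows, $m_i\notin B_{m_i}$, and $\sum_{G_i}x_k=x_{m_i}=\sum_{F_i}x_k$; hence the criterion from the paragraph after Example~\ref{skrajne} shows that $f^{-1}(\mathfrak c)$ contains a Cantor set. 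So I may assume, after passing to a tail of the series (harmless, since a value represented by $\ge\omega$ subsets of a tail is represented by $\ge\omega$ subsets of $\N$), that \emph{no} atom $x_m$ equals a finite subsum of $(x_k)_{k>m}$.

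\medskip

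\emph{Item (ii), main case.} Here I would run a fusion argument in $\{0,1\}^\N$. For a finite word $s$ one has $\mu([s])=[\mu_s,\mu_s+r_{|s|}]$, and the images of the two immediate successors of $[s]$ overlap in the interval $[\mu_s+x_{|s|+1},\,\mu_s+r_{|s|+1}]$, which has positive length precisely when $|s|+1\in S$. The goal is to build recursively a branching tree of cylinders $[w_\varepsilon]$, $\varepsilon\in\{0,1\}^{<\omega}$, with $[w_{\varepsilon0}],[w_{\varepsilon1}]\subset[w_\varepsilon]$ differing at their last coordinate, $|w_\varepsilon|\to\infty$ along every branch, and $\bigcap_{|\varepsilon|=k}\mu([w_\varepsilon])\ne\emptyset$ for each $k$. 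A nested–compacta argument then gives $t\in\bigcap_k\bigcap_{|\varepsilon|=k}\mu([w_\varepsilon])$; for each $\eta\in\{0,1\}^\N$ the branch $w_\eta:=\bigcup_kw_{\eta|_k}$ has $\mu(w_\eta)=t$ (the diameters $r_{|w_{\eta|_k}|}$ tend to $0$) and $\eta\mapsto w_\eta$ is injective, so $f(t)=\mathfrak c$. The recursive step is: from a node $w_\varepsilon$ with $t$ in its image, append a ``padding'' block (a word whose value is a finite subsum of a stretch of atoms) and then branch at a position $q\in S$, keeping $t$ in both successor images — possible because branching at $q\in S$ works for every remaining target in the interval $[x_q,r_q]$ of positive length $r_q-x_q$, and because $r_n\to0$ and the slack $r_m-x_m>0$ at $m\in S$ gives room.

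\medskip

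\emph{The obstacle.} The hard part is exactly this recursive step. The positions $i\notin S$ (where $x_i=r_i$) are \emph{rigid}: locally the series behaves there like the binary expansion, so they offer no genuine branching and only ``binary'' freedom in the padding, and the natural mesh of finite subsums available in a block ending just before $q$ is $r_{q-1}=x_q+r_q$, which exceeds the branching window $r_q-x_q$ by $2x_q$. The fix is to separate repositioning from branching: first steer the remaining target, through runs of rigid atoms and through $S$-positions at which one declines to branch, into the open interval $(x_q,r_q)$ for a well-chosen $q\in S$, exploiting that $S$ is infinite and $r_n\to0$; only then branch. The bookkeeping to be done is to show that a branchable $S$-position can be reached infinitely often while keeping the target off the (locally finitely many) ``bad'' values — such as $x_i$ at a rigid $i$ — at which the dynamics would collapse to an endpoint of the current image interval. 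As an alternative to aiming at $\mathfrak c$ one could try to produce a single value with exactly $\omega$ representations by telescoping the remainders as in Theorem~\ref{resztywyrazy}, but steering toward $\mathfrak c$ via the fusion seems more robust in the presence of rigid atoms.
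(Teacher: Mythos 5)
Your item (i) is essentially the paper's own argument: identify a tail on which $x_n=r_n$, invoke Lemma \ref{tailequalterm} to bound the tail's cardinal function, and propagate back with Lemma \ref{boundedconditions}. That part is correct.

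Item (ii) contains a genuine gap. Your ``main case'' is a programme, not a proof: you set up the fusion tree of cylinders, state what the recursive step must achieve, and then, in the paragraph headed ``The obstacle,'' concede that the step is unverified (``the bookkeeping to be done is to show that a branchable $S$-position can be reached infinitely often while keeping the target off the \dots bad values''). That bookkeeping is the entire content of the claim. Moreover the sketch tracks a single remaining target, whereas after the first branching there are two distinct residual targets (one per successor cylinder), and at the next stage $2^k$ of them, all of which must simultaneously be steered into a common window $(x_q,r_q)$ of length $r_q-x_q$ that may be far smaller than the mesh $r_{q-1}=x_q+r_q$ of available adjustments; nothing in the sketch addresses this. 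You have also made the problem strictly harder than required by aiming at $f_\mu^{-1}(\mathfrak c)\neq\emptyset$: the theorem only asserts $f_\mu^{-1}(\{\omega,\mathfrak c\})\neq\emptyset$, and whether the stronger conclusion follows is essentially one of the open problems the paper records. Your easy sub-case (pairwise disjoint $F_i,G_i$ with equal sums, yielding a Cantor set in $f^{-1}(\mathfrak c)$) is correct but covers only a special situation.

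The missing idea is that slow convergence makes every tail interval filling, which collapses the whole difficulty. Since $x_n\leq r_n$ for all $n$, Theorem \ref{Kakeya} gives $A((x_n)_{n>m})=[0,r_m]$ for every $m$. Choose slack positions $k_0<k_1<\cdots$ (with $x_{k_j}<r_{k_j}$) so sparse that $\sum_{j\geq l}x_{k_j}\leq r_{k_l}$ for every $l$; at stage $p$ it suffices to take $x_{k_p}<\min_{l<p}\bigl(r_{k_l}-\sum_{j=l}^{p-1}x_{k_j}\bigr)$, which is possible because $r_n\to0$ and there are infinitely many strict-slack positions. Put $t:=\sum_{j}x_{k_j}$. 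For each $l$ the value $t-\sum_{j<l}x_{k_j}$ lies in $[0,r_{k_l}]$, hence equals $\sum_{n\in A_l}x_n$ for some $A_l\subset(k_l,\infty)$, and $B_l:=\{k_0,\dots,k_{l-1}\}\cup A_l$ represents $t$. Since $k_l\in B_{l'}\setminus B_l$ for $l'>l$, the sets $B_l$ are pairwise distinct, so $f_\mu(t)\geq\omega$ and $t\in f_\mu^{-1}(\{\omega,\mathfrak c\})$. No branching tree, and no case split on whether atoms are finite subsums of later atoms, is needed.
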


\begin{proof} Let $x_n:=\mu(\{n\})$ and $r_n:=\mu((n,\infty))$. 
(i). Let $k$ be the largest natural number $n$ for which the inequality  $x_n<r_n$ holds. Therefore $x_n=r_n$ for $n\geq k+1$. By Lemma \ref{tailequalterm} the cardinal function $f_{k+1}$ is bounded. Hence by Lemma \ref{boundedconditions} we obtain the boundedness of $f$.

(ii). Firstly we inductively define a sequence of indexes $k_0<k_1<\dots$ such that 
\begin{equation}\label{tails}
x_{k_j}<r_{k_j}\text{ for every }j\geq 0\text{ and }\sum_{j=l}^ix_{k_j}<r_{k_l}\text{ for every }0\leq l\leq i.
\end{equation}
Find $k_0$ such that $x_{k_0}<r_{k_0}$. Then \eqref{tails} holds for $l=i=0$. Assume that we have already constructed $k_0<k_1<\dots<k_{p-1}$. We find $k_p>k_{p-1}$ such that $x_{k_p}<r_{k_p}$ and $x_{k_p}<\min_{l<p}(r_{k_l}-\sum_{j=l}^{p-1}x_{k_j})$. Then \eqref{tails} holds for all $0\leq l\leq i\leq p$.  

Now, consider an element $t:=x_{k_0}+x_{k_1}+x_{k_2}+\dots$. Note that
$$
t-\sum_{j=0}^{l-1}x_{k_j}=\sum_{j=l}^\infty x_{k_j}\leq r_{k_l}.
$$
Thus $t-\sum_{j=0}^{l-1}x_{k_j}\in[0,r_{k_l}]$ and the slow convergence implies that $[0,r_{k_l}]=A((x_n)_{n>k_l})$. Therefore there is $A_l\subset[k_l+1,\infty)\cap\N$ such that 
$$
t-\sum_{j=0}^{l-1}x_{k_j}=\sum_{n\in A_l}x_n.
$$

Hence $t=\sum_{n\in B_l}x_n$ where $B_l=\{k_0,k_1,\dots,k_{l-1}\}\cup A_l$. Since $k_0,k_1,\dots,k_{l-1}\in B_l$ and $k_l\notin B_l$, then $B_1,B_2,\dots$ are pairwise distinct, and therefore $f(t)\geq\omega$.
\end{proof}

Immediately we obtain the following. 
\begin{corollary}\label{CorollaryOnKakeyaCondition}
Let $\mu$ be a measure such that $f_\mu^{-1}(\{\omega,\mathfrak{c}\})=\emptyset$ and $f_\mu$ is unbounded in $\N$. Then $\mu(\{n\})>\mu((n,\infty))$ for infinitely many $n$'s. 
\end{corollary}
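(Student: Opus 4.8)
The plan is to argue by contraposition, reducing everything to Theorem \ref{OnKakeyaCondition} applied to a tail of the series. Write $x_n:=\mu(\{n\})$ and $r_n:=\mu((n,\infty))$. Assume the two hypotheses $f^{-1}(\{\omega,\mathfrak{c}\})=\emptyset$ and $f$ unbounded, and suppose toward a contradiction that the conclusion fails, i.e.\ that $x_n>r_n$ holds for only finitely many $n$. Then there is an index $N$ with $x_n\le r_n$ for every $n\ge N$, which is precisely the blanket slow-convergence hypothesis of Theorem \ref{OnKakeyaCondition}, but now only on a tail.

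Next I would pass to the tail series $\sum_{n\ge N}x_n$, whose cardinal function is $f_N$ in the notation of Lemma \ref{boundedconditions}. The remainders of this tail coincide with the original $r_n$ for $n\ge N$, so the hypothesis $x_n\le r_n$ $(n\ge N)$ of Theorem \ref{OnKakeyaCondition} is satisfied by the tail. I then split into the two cases of that theorem according to whether the \emph{strict} inequality $x_n<r_n$ occurs finitely or infinitely often among $n\ge N$. In the finite case, part (i) yields that $f_N$ is bounded; but Lemma \ref{boundedconditions} gives the equivalence of boundedness of $f=f_1$ and of $f_N$, so the assumption that $f$ is unbounded forces $f_N$ to be unbounded, a contradiction. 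Hence the infinite case must hold, and part (ii) provides a value $t$ with $f_N(t)\in\{\omega,\mathfrak{c}\}$.

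Finally I would lift this value back to the full series. Since every subset of $[N,\infty)$ summing to $t$ is in particular a subset of $\N$ summing to $t$, we have $f_N(t)\le f(t)$, the same monotonicity used in the proof of Lemma \ref{boundedconditions}. Thus $f(t)\ge\omega$, and as $f(t)\in\{1,2,\dots,\omega,\mathfrak{c}\}$ this means $f(t)\in\{\omega,\mathfrak{c}\}$, so $t\in f^{-1}(\{\omega,\mathfrak{c}\})$, contradicting $f^{-1}(\{\omega,\mathfrak{c}\})=\emptyset$. This contradiction establishes that $x_n>r_n$ for infinitely many $n$, which is the asserted conclusion.

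The step requiring the most care is the bookkeeping inside the contradiction, and in particular keeping the two inequalities straight: the \emph{non-strict} bound $x_n\le r_n$ $(n\ge N)$ is what licenses the application of Theorem \ref{OnKakeyaCondition} to the tail, whereas the finitely/infinitely-often dichotomy is measured on the \emph{strict} inequality $x_n<r_n$. One should also verify that each hypothesis of the corollary is consumed exactly once—unboundedness eliminates case (i) through Lemma \ref{boundedconditions}, and the emptiness of $f^{-1}(\{\omega,\mathfrak{c}\})$ contradicts case (ii). Beyond this accounting, every ingredient is a direct invocation of the results already established, which is why the corollary follows immediately.
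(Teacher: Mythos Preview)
Your proposal is correct and is exactly the argument the paper has in mind: the corollary is stated with no explicit proof beyond the remark ``Immediately we obtain the following,'' and your contrapositive reduction to the tail series, followed by invoking Theorem~\ref{OnKakeyaCondition} in tandem with Lemma~\ref{boundedconditions}, is precisely how one unpacks that remark. The only thing to note is that your write-up is considerably more detailed than the paper's one-line dismissal, but the underlying route is identical.
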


We do not know if there is a measure which fulfills the assumptions of Corollary \ref{CorollaryOnKakeyaCondition}.  

\begin{remark}\label{RemarkGuthrieNeymannCantorval}
\emph{
Note that the fact that inequality $x_n<r_n$ holds for infinitely many $n$'s does not imply that $f^{-1}(\{\omega,\mathfrak{c}\})\neq\emptyset$. Consider the following sequence $(x_n):=(\frac{2}{4},\frac{3}{4},\frac{2}{4^2},\frac{3}{4^2},\dots)$. As we have mentioned in the Introduction the set $\mathbf{T}=A(x_n)$ is known as the Guthrie-Neymann Cantorval. Bielas, Plewik and Walczy\'nska showed in \cite{BPW} that $R(f)=\{1,2\}$ where $f$ is the cardinal function of $A(x_n)$. This shows that the slow convergence assumption in Theorem \ref{OnKakeyaCondition} is essential. 
}
\end{remark}

By a gap in $E$ we understand any such interval $(a,b)$ that $a,b\in E$ and $(a,b)\cap E=\emptyset $. The following lemma can be found in \cite{BFGPWS}. 
\begin{lemma}\label{gaplemma}
Suppose that $(a,b)$ is a gap in $E=A(x_{n})$ such that for
any gap $\left( a_{1},b_{1}\right) $ with $b_{1}<a$ we have $b-a>b_{1}-a_{1}$
(in other words $\left( a,b\right) $ is the longest gap from the left). Then 
$b=x_{k}$ for some $k\in \mathbb{N}$ and $a=r_{k}$.
\end{lemma}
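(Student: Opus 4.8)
The plan is to pass to a finite truncation, reduce the statement to a combinatorial claim about the finite achievement set $A(x_1,\dots,x_m)$, and prove that claim by induction on $m$.

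First discard the trivial case in which $A(x_n)$ has no gap at all, and take $(a,b)$ as in the statement with $L:=b-a>0$. Choose $m$ with $r_m<L$ — possible since $r_m\to 0$ — and put $E_m:=A(x_1,\dots,x_m)$, so that $A(x_n)=\bigcup_{q\in E_m}\bigl(q+A((x_n)_{n>m})\bigr)$, each ``block'' $q+A((x_n)_{n>m})$ lying in the interval $[q,q+r_m]$ of length $r_m<L$. A block cannot contain $(a,b)$, and it meets $(a,b)$ only if one of its endpoints $q$, $q+r_m$ — both of which lie in $A(x_n)$ — falls strictly inside $(a,b)$, which is impossible; hence every block lies in $[0,a]$ or in $[b,\infty)$. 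Consequently $a=q_0+r_m$, where $q_0:=a-r_m$ is the largest $q\in E_m$ with $q+r_m\le a$; $b$ is the least $q\in E_m$ with $q\ge b$, hence $b\in E_m$; there is no element of $E_m$ strictly between $q_0$ and $b$; and so $b$ is the successor of $q_0$ in $E_m$ with $b-q_0=L+r_m$. (The degenerate situations — finitely many atoms, or $a=0$ — fit this framework with $r_m=0$.)

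Next I show that $(q_0,b)$ is a longest gap from the left inside the finite set $E_m$: if $c<d$ are consecutive in $E_m$ with $d\le q_0$, then $d-c<b-q_0=L+r_m$. If $d-c\le r_m$ this is clear. If $d-c>r_m$, then among all blocks only those based at $c$ and at $d$ meet $[c,d]$, the first reaching exactly up to $c+r_m<d$; hence $(c+r_m,d)$ is a genuine gap of $A(x_n)$ of length $(d-c)-r_m$ with right endpoint $d\le q_0<a$, so the hypothesis of the lemma forces $(d-c)-r_m<L$, i.e. $d-c<L+r_m$. It therefore suffices to prove, by induction on $m$, the finite claim: \emph{if $(p,b)$ is a longest gap from the left in $A(x_1,\dots,x_m)$, then $b=x_k$ and $p=x_{k+1}+\dots+x_m$ for some $k\le m$.} For $m=1$ the only gap is $(0,x_1)$. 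For the step, decompose $E_m=E_{m-1}\cup(E_{m-1}+x_m)$, where $x_m$ is the smallest atom. If $b\le x_m$, then $b$ is the least positive element of $E_m$, forcing $b=x_m$, $p=0$, $k=m$. If $b>x_m$, one checks that $b\in E_{m-1}$ and $p-x_m\in E_{m-1}$, that $(p-x_m,b)$ is a longest gap from the left in $E_{m-1}$, and applies the inductive hypothesis to get $b=x_k$ and $p-x_m=x_{k+1}+\dots+x_{m-1}$, hence $p=x_{k+1}+\dots+x_m$. Feeding this $k$ back into the truncation identities gives $a=q_0+r_m=\sum_{n=k+1}^m x_n+r_m=r_k$ and $b=x_k$, which is the assertion.

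The main obstacle is the inductive step of the finite claim in the case $b>x_m$: one must argue precisely that a longest-from-the-left gap of $E_m$ necessarily ``straddles'' the base copy $E_{m-1}$ and its translate $E_{m-1}+x_m$ in such a way that deleting the last atom yields a longest-from-the-left gap of $E_{m-1}$. This requires a careful analysis of how the two copies overlap in a neighbourhood of the gap, crucially using that the gap beats every gap to its left (so it cannot be an ``interior'' gap of either copy alone). The remaining ingredients — the block bookkeeping of the first two paragraphs and the translation back — are routine.
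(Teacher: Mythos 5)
First, a point of reference: the paper does not prove Lemma \ref{gaplemma} at all --- it is quoted from \cite{BFGPWS} --- so there is no in-paper argument to compare yours against, and your proof must stand on its own. Your reduction is sound and carefully executed: truncating at $m$ with $r_m<L$, the block decomposition correctly yields $a=q_0+r_m$ with $q_0=a-r_m\in E_m$, $b\in E_m$, $q_0$ and $b$ consecutive in $E_m$, and (via the induced gaps $(c+r_m,d)$ of $A(x_n)$) the longest-from-the-left property of $(q_0,b)$ inside $E_m$; the final translation $a=\sum_{n=k+1}^{m}x_n+r_m=r_k$ is also correct. The problem is that you stop exactly at what you yourself identify as the main obstacle: the inductive step of the finite claim when $b>x_m$ is only asserted (``one checks that~\dots''), and your closing paragraph confirms the check has not been done. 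As submitted this is a genuine gap, because the entire weight of the lemma rests on that step, and your description of what is needed (``the gap straddles the two copies'') does not name the mechanism that makes it work.

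The gap is closable, and the missing idea is one observation you never state: $(0,x_m)$ is always a gap of $E_m$ lying to the left of $(p,b)$ (indeed $x_m$ is the least positive element of $E_m$, and $x_m<b$ together with $(p,b)\cap E_m=\emptyset$ forces $x_m\le p$), so the longest-from-the-left property gives $b-p>x_m$. Everything else then follows in a few lines. If $b\notin E_{m-1}$, then $b-x_m\in E_{m-1}\subset E_m$ and $b-x_m<b$, hence $b-x_m\le p$, i.e.\ $b-p\le x_m$, a contradiction; so $b\in E_{m-1}$. If $p\notin E_{m-1}+x_m$, then $p\in E_{m-1}$ and $p+x_m\in E_m\cap(p,b)$ (using $x_m<b-p$), a contradiction; so $p-x_m\in E_{m-1}$. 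Any $e\in E_{m-1}\cap(p-x_m,b)$ yields either $e\in E_m\cap(p,b)$ or $e+x_m\in E_m\cap(p,b)$, so $(p-x_m,b)$ is a gap of $E_{m-1}$. Finally, for consecutive $c<d$ in $E_{m-1}$ with $d\le p-x_m$: if $d-c\le x_m$ the required bound $d-c<(b-p)+x_m$ is immediate, while if $d-c>x_m$ then $(c+x_m,d)$ is a gap of $E_m$ with right endpoint $d\le p$, whence $d-c-x_m<b-p$ by the property you established in $E_m$. This completes the induction; I recommend writing it out explicitly, since without it the submission is an outline rather than a proof.
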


\begin{proposition}
Let $\mu$ be a measure which set of uniqueness equals $\{0,\mu(\N)\}$. Then $\on{rng}(\mu)$ is an interval. 
\end{proposition}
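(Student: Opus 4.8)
The plan is to argue by contradiction: assuming $\on{rng}(\mu)=A(x_n)$ is not an interval, I will produce a point of $\on{rng}(\mu)$ which is obtained uniquely and is different from $0$ and $\mu(\N)$, contradicting the hypothesis. Throughout write $x_n:=\mu(\{n\})$, $r_n:=\sum_{k>n}x_k=\mu((n,\infty))$ and $S:=\sum_{n=1}^\infty x_n=\mu(\N)$. Since $0=\mu(\emptyset)$ and $S=\mu(\N)$ both lie in the compact set $E:=\on{rng}(\mu)=A(x_n)$, and $E$ is not an interval, we get $E\subsetneq[0,S]$; taking the connected component of $[0,S]\setminus E$ containing a point of $[0,S]\setminus E$ (whose endpoints necessarily lie in the closed set $E$) shows that $E$ has at least one gap.

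Next I would select a gap to which Lemma \ref{gaplemma} applies. The gaps of $E$ are pairwise disjoint subintervals of $[0,S]$, so their lengths are summable, in particular bounded; hence the supremum $L$ of their lengths is attained and only finitely many gaps have length $L$. Let $(a,b)$ be the leftmost gap of length $L$. Then every gap $(a_1,b_1)$ with $b_1<a$ has length $<L=b-a$ (length $\le L$ always, and length $=L$ would contradict leftmostness), so $(a,b)$ is "the longest gap from the left" in the sense of Lemma \ref{gaplemma}. That lemma then gives $k\in\N$ with $b=x_k$ and $a=r_k$; in particular $r_k<x_k$.

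The crux is to check that $r_k$ is obtained uniquely. Suppose $\sum_{n\in A}x_n=r_k$ for some $A\subseteq\N$. If $A$ contained an index $j\le k$, then, using that $(x_n)$ is non-increasing, $\sum_{n\in A}x_n\ge x_j\ge x_k>r_k$, a contradiction; hence $A\subseteq\{k+1,k+2,\dots\}$. But $r_k=\sum_{n>k}x_n$ with every $x_n>0$, so $\sum_{n\in A}x_n=r_k$ forces $A=\{k+1,k+2,\dots\}$. Thus $f_\mu(r_k)=1$. Moreover $0<r_k<S$ because $x_{k+1}>0$ gives $r_k>0$ and $x_1,\dots,x_k>0$ gives $r_k=S-\sum_{i\le k}x_i<S$; hence $r_k\notin\{0,\mu(\N)\}$, contradicting that the set of uniqueness of $\mu$ equals $\{0,\mu(\N)\}$. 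Therefore $\on{rng}(\mu)$ is an interval.

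I do not anticipate a real obstacle here. The only mildly delicate point is verifying that a "longest gap from the left" exists so that Lemma \ref{gaplemma} can be invoked; this is exactly where the summability of the gap lengths (forced by $\sum x_n<\infty$) is used, and everything else is a short computation with the monotonicity $x_n\ge x_{n+1}>0$.
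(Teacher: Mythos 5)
Your proof is correct and follows essentially the same route as the paper: locate the longest gap from the left, apply Lemma \ref{gaplemma} to identify its endpoints as $a=r_k$ and $b=x_k$, and contradict the hypothesis that only $0$ and $\mu(\N)$ are uniquely obtained. You also supply details the paper leaves implicit (the existence of such a gap via summability of the gap lengths, and the explicit check that $r_k$ is uniquely obtained), and by using only the left endpoint $a=r_k$ you avoid the paper's stronger claim that $b=x_k$ is also uniquely obtained, which can fail if some term of the sequence is repeated.
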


\begin{proof}
Suppose to the contrary that $\on{rng}(\mu)$ is not an interval. Then is contains a gap. Take the longest gap from the left in $\on{rng}(\mu)$, say $(a,b)$. By Lemma \ref{gaplemma} $a$ and $b$ have unique representations. This yields a contradiction. 
\end{proof}

\begin{remark}\emph{
The examples considered so far may suggest that the set of uniqueness is either small (meager) in $A(x_n)$ or large (comeager) in $A(x_n)$. The following simple example shows that this is not the case. Consider the sequence $x_1=x_2:=1/2$ and $x_n:=1/2^{n-1}$ for $n\geq 3$. Then $A(x)=[0,3/2]$; all but countably many points in $[0,1/2)$ and in $(1,3/2]$ have unique expansion, while all points in $[1/2,1]$ can be obtained for at least two ways (for the expansion of any $x\in[1/2,1]$ one need to use $x_1$ or $x_2$). 
}
\end{remark}

Let us go back to the Guthrie-Nymann Cantorval $\mathbf{T}$. As we have mentioned in the Introduction and in Remark \ref{RemarkGuthrieNeymannCantorval} Bielas, Plewik and Walczy\'nska in \cite{BPW} determined the cardinal function for $\mathbf{T}$. They have characterized the set of all points of the Cantorval, which are not obtained in the unique way and have shown that such points have exactly two expansions. It turns out that the set of uniqueness is dense in $\mathbf{T}$, has cardinality continuum, but it does not contain any interval (its complement in $\mathbf{T}$ is dense as well). 

\begin{problem}
Does there exists a measure $\mu$ such that its set of uniqueness contains an interval? Note that $\on{rng}(\mu)$ cannot be a finite union of intervals by Corollary \ref{sumofintervals}. 
\end{problem}


Note that the properties of the set of uniqueness strongly depends on the direction of the inequality between elements $x_n$ and the tails $r_n$. Namely if the series $\sum_{n=1}^{\infty}x_{n}$ is quickly convergent, that is $x_n>r_n$ for each $n\in\mathbb{N}$, then each point of the measure range is obtained uniquely by Proposition \ref{szybkozbiezne}. Example \ref{nieszybkozbiezny} shows that the latter equality may also hold for series, which is not quickly convergent. On the other hand the equality $x_n\leq r_n$ holds for large enough $n$ if and only if $A(x_n)$ is a finite union of closed intervals. In that case the set of uniqueness has an empty interior in $\on{rng}(\mu)$, by Corollary \ref{sumofintervals}. In the next Example we show that there exists a series for which $x_n\leq r_n$ holds for infinitely many $n$, but each point of the measure range is obtained uniquely. It strengthens the Example  \ref{nieszybkozbiezny} and shows that the assumption in  Corollary \ref{sumofintervals} is optimal in sense of relation between $x_n$'s and remainders $r_n$'s.

\begin{example}\emph{
We define $x_{2n-1}:=\frac{1}{10^{n^2}}+\frac{1}{2\cdot 10^{(n+1)^2}}$, $x_{2n}:=\frac{1}{10^{n^2}}$ for all natural $n$. Note that $x_{2n-1}\leq x_{2n}+x_{2n+1}<r_{2n-1}$ for all $n\in\mathbb{N}$. We will show that each point of $A(x_n)$ has unique representation. Here we give a list of properties of the series $\sum_{n=1}^{\infty}x_{n}$ that we will use in the sequel; their proofs are tedious and we will left them to the readers. All of the inequalities holds for all $k\in\mathbb{N}$.
\begin{itemize}
\item[(i)] $x_{2k}>r_{2k}$;
\item[(ii)] $x_{2k+1}>r_{2k+2}$;
\item[(iii)] $x_{2k+1}-x_{2k+2}>r_{2k+5}$;
\item[(iv)] $x_{2k+2}+x_{2k+4}-x_{2k+1}>r_{2k+4}$;
\item[(v)] $ x_{2k+1}+x_{2k+3}-x_{2k+2}-x_{2k+4}>r_{2k+4}$;
\item[(vi)] $x_{2k+1}+x_{2k+4}-x_{2k+2}-x_{2k+3}>r_{2k+4}$.
\end{itemize}
Let us assume that there exists $x\in A(x_n)$ such that $f(x)\geq 2$, that is $x=\sum_{n\in A_0}x_n+\sum_{n\in A_1}x_n=\sum_{n\in B_0}x_n+\sum_{n\in B_1}x_n$ for some $A_0,B_0\subset 2\mathbb{N}$, $A_1,B_1\subset 2\mathbb{N}-1$, where $A_0\neq B_0$ or $A_1\neq B_1$. Note that both $(x_{2n})_{n=1}^\infty$ and $(x_{2n-1})_{n=1}^\infty$ are quickly convergent. So if $A_i=B_i$, then $\sum_{n\in A_{1-i}}x_n=\sum_{n\in B_{1-i}}x_n$, and by the quick convergence of the sequence $(x_{2n+1-i})_{n=1}^\infty$, we obtain $A_{1-i}=B_{1-i}$ which is impossible. Thus $A_0\neq B_0$ \textbf{and} $A_1\neq B_1$. 
\\Let $2k+1$ be the minimal index belonging to $A_1\setminus B_1$. By (i) we obtain $A_0\cap\{1,\ldots,2k\}=B_0\cap\{1,\ldots,2k\}$. By (ii) we have $2k+2\in B_0\setminus A_0$. By (iii), we get $2k+4\in B_0\setminus A_0$ or $2k+3\in B_1\setminus A_1$. 
If $2k+4\in B_0\setminus A_0$, then by the inequality (iv), we get $2k+3\in A_1\setminus B_1$. If $2k+3\in B_1\setminus A_1$, then we have $x_{2k+2}+x_{2k+3}-x_{2k+1}>r_{2k+4}$, so  $2k+4\in A_0\setminus B_0$. Therefore:
\begin{center}
\textbf{either} $2k+4\in B_0\setminus A_0$ and $2k+3\in A_1\setminus B_1$ \textbf{or} $2k+3\in B_0\setminus A_0$ and $2k+4\in A_1\setminus B_1$. 
\end{center} 
For shortening a notation for any set $A$ we denote $A^{k}=A\cap\{1,\ldots,k\}$.
Hence
$$
0=\Bigg\vert \sum_{n\in A_0}x_n+\sum_{n\in A_1}x_n-\sum_{n\in B_0}x_n-\sum_{n\in B_1}x_n\Bigg\vert=
$$
$$
\Bigg\vert\sum_{n\in A_0^{2k+4}}x_n+\sum_{n\in A_1^{2k+4}}x_n-\sum_{n\in B_0^{2k+4}}x_n-\sum_{n\in B_1^{2k+4}}x_n+\sum_{n\in A_0\setminus A_0^{2k+4}}x_n+\sum_{n\in A_1\setminus A_1^{2k+4}}x_n-\sum_{n\in B_0\setminus B_0^{2k+4}}x_n-\sum_{n\in B_1\setminus B_1^{2k+4}}x_n\Bigg\vert\geq
$$
$$
\Bigg\vert\sum_{n\in A_0^{2k+4}}x_n+\sum_{n\in A_1^{2k+4}}x_n-\sum_{n\in B_0^{2k+4}}x_n-\sum_{n\in B_1^{2k+4}}x_n\Bigg\vert-\Bigg\vert\sum_{n\in A_0\setminus A_0^{2k+4}}x_n+\sum_{n\in A_1\setminus A_1^{2k+4}}x_n-\sum_{n\in B_0\setminus B_0^{2k+4}}x_n-\sum_{n\in B_1\setminus B_1^{2k+4}}x_n\Bigg\vert
$$
$$ 
\geq\Bigg\vert\sum_{n\in A_0^{2k+4}}x_n+\sum_{n\in A_1^{2k+4}}x_n-\sum_{n\in B_0^{2k+4}}x_n-\sum_{n\in B_1^{2k+4}}x_n\Bigg\vert-r_{2k+4}>0.
$$ 
The last strict inequality, which leads to a contradiction is followed by (v) in case (A), and by (vi) when (B).
}
\end{example}

\section{Measures with prescribed cardinal functions}

In this section we show how to produce a measure with given range of its cardinal function. We also give examples of sets $R\subset\N$ such that there is no measure $\mu$ with $R=R(f_\mu)$. Let us start from the case when the set of uniqueness is the whole range $\on{rng}(\mu)$ up to some finite set. 
\begin{proposition}\label{niematrojkiwsrodku}
Assume that the set of uniqueness for $\mu$ equals $\on{rng}(\mu)\setminus\{x\}$ for some $x\in\on{rng}(\mu)$. Then $f_\mu(x)=2$ and there are $A,B\subset\N$ with $\mu(A)=\mu(B)$, $A\cap B=\emptyset$ and $A\cup B=\N$. 
\end{proposition}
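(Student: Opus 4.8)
The plan is to first identify $x$ as the midpoint $s/2$ of the range, where $s=\sum_{n=1}^\infty x_n=\mu(\N)$ and $x_n=\mu(\{n\})>0$; then the partition comes for free, and the only remaining work is to exclude $f_\mu(x)\geq 3$. Throughout I will use repeatedly that, since all $x_n$ are positive, $\mu$ is strictly monotone under inclusion, so in particular a subset of measure $0$ is empty and $C\subsetneq D$ forces $\mu(C)<\mu(D)$.

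First I would note that $f_\mu(x)\geq 2$, because $x\in\on{rng}(\mu)$ while $x\notin f_\mu^{-1}(1)$. Choosing $A$ with $\mu(A)=x$, the set $\N\setminus A$ has measure $s-x\in\on{rng}(\mu)$, and Proposition \ref{podstawy}(ii) gives $f_\mu(s-x)=f_\mu(x)\geq 2$. Hence $s-x\notin f_\mu^{-1}(1)=\on{rng}(\mu)\setminus\{x\}$, and since $s-x\in\on{rng}(\mu)$ this can only happen if $s-x=x$, i.e. $x=s/2$. The second assertion of the proposition is now immediate: for any $A$ with $\mu(A)=x$ put $B=\N\setminus A$; then $\mu(B)=s-x=x$, $A\cap B=\emptyset$, $A\cup B=\N$, and $A\neq B$ as no set equals its complement.

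The core of the argument is showing $f_\mu(x)=2$. Suppose toward a contradiction that there are three distinct sets of measure $x$. Then some two of them, say $C\neq D$, are not complementary, i.e. $D\neq\N\setminus C$ — otherwise two of the three sets would coincide with $\N\setminus C$. Put $P=C\setminus D$ and $Q=D\setminus C$. These are disjoint, and $\mu(P)=\mu(C)-\mu(C\cap D)=\mu(D)-\mu(C\cap D)=\mu(Q)$; since $\mu(C)=\mu(D)$ with $C\neq D$, neither of $C,D$ contains the other, so $P$ and $Q$ are nonempty, hence distinct. Therefore $f_\mu(\mu(P))\geq 2$, and by hypothesis $\mu(P)=x$. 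Consequently $\mu(C\cap D)=\mu(C)-\mu(P)=0$, so $C\cap D=\emptyset$; then $\mu(C\cup D)=2x=s$, so $\N\setminus(C\cup D)$ has measure $0$ and is empty, giving $C\cup D=\N$ and $D=\N\setminus C$ — contradicting the choice of $C$ and $D$. Thus $f_\mu(x)\leq 2$, and with $f_\mu(x)\geq 2$ we conclude $f_\mu(x)=2$.

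I expect no serious obstacle: the only place that needs care is the last paragraph, where one must genuinely extract a \emph{non-complementary} pair from the three representations and then invoke positivity of the $x_n$ at each of the three cardinality-zero steps. The whole proof runs on two ingredients already available: Proposition \ref{podstawy}(ii) and the hypothesis that $x$ is the \emph{only} value in $\on{rng}(\mu)$ that is not obtained uniquely.
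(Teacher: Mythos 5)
Your proof is correct and follows essentially the same route as the paper's: use the symmetry $f_\mu(t)=f_\mu(\mu(\N)-t)$ to pin down $x=\tfrac{1}{2}\mu(\N)$ (whence the complementary pair $A$, $\N\setminus A$), and then, assuming three representations, extract a second non-uniquely obtained value from the two halves $C\setminus D$, $D\setminus C$ of a symmetric difference. The only cosmetic difference is in how the pair is chosen and the contradiction closed: the paper picks two sets with nonempty intersection (available since $3x>\mu(\N)$), so that $\mu(A\setminus B)<x$ immediately violates the hypothesis, whereas you pick a non-complementary pair and rule out $\mu(C\setminus D)=x$ by showing it would force $D=\N\setminus C$ — both steps are valid and of the same depth.
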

\begin{proof} 
Suppose to the contrary that $f(x)\geq 3$. Since $f$ is symmetric it is clear that $x=\frac{1}{2}\mu(\N)$. Otherwise we would have $f(x)\geq 3$ and $f(\sum_{n=1}^{\infty}x_n-x)\geq 3$ by Proposition \ref{podstawy}(2). There exist three distinct sets $A,B,C$ such that $x=\mu(A)=\mu(B)=\mu(C)$. Note that the sets $A,B,C$ cannot be pairwise disjoint, since then $\mu(A\cup B\cup C)=\mu(A)+\mu(B)+\mu(C)=3x>\mu(\N)$. Without loss of generality we may assume that $A\cap B\neq\emptyset$. Hence for $y:=\mu( A\setminus B)=\mu(B\setminus A)<x$ we have $f(y)\geq 2$. But this contradicts the assumption.

Therefore $f(x)=2$. Hence there are $A,B\subset\N$ with $\mu(A)=\mu(B)=\frac{1}{2}\mu(\N)$. As before we show that $A\cap B=\emptyset$. Since $\mu(A\cup B)=\mu(\N)$ and the support of $\mu$ equals $\N$, we obtain that $A\cup B=\N$.  
\end{proof}

The following example shows that the assumption of Proposition \ref{niematrojkiwsrodku} is fulfilled for some measure. 

\begin{example}\label{dwojkawsrodku}\emph{
Let  $\sum_{n=1}^{\infty}y_n$ be any absolutely convergent series such that each of its point has unique representation. Let us define $x_1:=\sum_{n=1}^{\infty}y_n$ and $x_{n+1}:=y_n$ for $n\in\mathbb{N}$. Then $A(x_n)=A(y_n)\cup (A(y_n)+x_1)$ and $A(y_n)\cap (A(y_n)+x_1)=\{x_1\}$. Consequently the set of uniqueness equals $A(x_n)\setminus\{x_1\}$ and $f(x_1)=2$. Here the sets from the assertion of Proposition \ref{niematrojkiwsrodku} are $A:=[2,\infty)$ and $B:=\{1\}$.
}
\end{example}

We can also construct an example in which both sets $A$ and $B$ are infinite.

\begin{example}\label{nieskonczonedwojkawsrodku}\emph{
Let $x_1:=\frac{8}{9}$, $x_2:=\frac{2}{3}$, $x_{2n+1}:=\frac{1}{10^n}$,  $x_{2n+2}:=\frac{3}{10^n}$ for $n\in\mathbb{N}$. Note that $\sum_{n=1}^{\infty}x_n=2$ and $\sum_{n=1}^{\infty}x_{2n-1}=\sum_{n=1}^{\infty}x_{2n}=1$. The sets from the assertion of Proposition \ref{niematrojkiwsrodku} are $A:=2\N$ and $B:=2\N-1$. We will show that other points of $\on{rng}(\mu)$ are uniquely obtained where $\mu$ is the measure given by the series $\sum_{n=1}^\infty x_n$. Let $x\in\on{rng}(\mu)$. There is $E\subset\N$ with $x=\mu(E)$. Using the fact that $0\leq x\leq\mu(\N)=2$ and the symmetry of $\on{rng}(\mu)$, it is enough to prove the uniqueness of $E$ for $x<1$. Consider the following cases.
\\ \textbf{Case 1.} $0<x<x_2$. Then $1,2\notin E$. Thus $x\in \{\sum_{n=1}^{\infty}\frac{\delta_n}{10^n} : (\delta_n)\in\{0,1,3,4\}^{\omega}\}$. For every $n\in\mathbb{N}$ we have 
\begin{itemize}
\item $\delta_n=0$ if and only if $2n+1\notin E$ and $2n+2\notin E$;
\item $\delta_n=1$ if and only if $2n+1\in E$ and $2n+2\notin E$;
\item $\delta_n=3$ if and only if $2n+1\notin E$ and $2n+2\in E$;
\item $\delta_n=4$ if and only if $2n+1\in E$ and $2n+2\in E$.
\end{itemize}
Hence the set $E$ is a unique representation of $x$. 
\\ \textbf{Case 2.} $x_2<x<x_1$. Since $\mu([3,\infty))<\mu(\{2\})=x_2<x<x_1=\mu(\{1\})$, then $2\in E$ and $1\notin E$. Note that $x-x_2=\mu(E)\setminus \mu(\{2\})=\mu(E\setminus\{2\})$. Since $x-x_2<x_1-x_2<x_2$, then by already proved Case 1 we obtain that $E\setminus\{2\}$ is a unique representation of $x-x_2$. Since every representation of $x$ contains $x_2$, the set $E$ is unique. 
\\ \textbf{Case 3.} $x_1<x\leq 1$. Assume that there is $D\subset\N$, $D\neq E$ with $x=\mu(D)$. We will show that $x=1$. If $1\in D\cap E$ then $x-x_1<1-x_1<x_2$ and by the already proved Case 1 the representation $x-x_1=\mu(E\setminus \{1\})$ is unique. Thus $D\setminus\{1\}=E\setminus\{1\}$, and consequently $D=E$ which is a contradiction. Hence $1\notin D\cap E$. In the same way one can prove that $2\notin D\cap E$. Since $x>\mu([3,\infty))$, then $1\in D$ and $2\in E$ or vice-versa. Let assume that $1\in D$ and $2\in E$. We will prove inductively that $D$ and $E$ are the sets of all odd and even numbers, respectively. Let us assume that for some $n\in\mathbb{N}$ we have already prove that $D\supset\{1,3,\ldots,2n-1\}$, $D\cap\{2,4,\ldots,2n\}=\emptyset$ and  $E\supset\{2,4,\ldots,2n\}$, $E\cap\{1,3,\ldots,2n-1\}=\emptyset$. Since 
$$
\sum_{k=1}^{n}x_{2k}+x_{2n+1}+\sum_{k=2n+3}^{\infty}x_{k}=\Big[\frac{2}{3}+\frac{1}{3}\Big(1-\frac{1}{10^{n-1}}\Big)\Big]+\frac{1}{10^n}+\frac{4}{9\cdot 10^n}<\frac{8}{9}+\frac{1}{9}\Big(1-\frac{1}{10^{n-1}}\Big)=\sum_{k=1}^{n}x_{2k-1}\leq \mu(D),
$$ 
we obtain $2n+2\in E$. Note that $\sum_{k=1}^{n+1}x_{2k}+x_{2n+1}>1$, so $2n+1\notin E$. Since $\sum_{k=1}^{n}x_{2k-1} + x_{2n+2}>1$, we obtain $2n+2\notin D$. Moreover $\sum_{k=1}^{n}x_{2k-1}+\sum_{k=2n+3}^{\infty}x_{k}<\sum_{k=1}^{n+1}x_{2k}\leq\mu(E)$, so $2n+1\in D$. By the induction we obtain that $D=2\mathbb{N}-1$ and $E=2\mathbb{N}$. Thus $x=1$. Note that we have also shown that $\mu(C)=1$ if and only if $C=2\mathbb{N}-1$ or $C=2\mathbb{N}$. Hence $f(1)=2$.
\\ \textbf{Case 4.} $x=x_2$. It is clear that we must have $E=\{2\}$.
\\ \textbf{Case 5.} $x=x_1$. Suppose $1\notin E$. Since $x_1>\mu([3,\infty))$, then $2\in E$. Therefore $x-x_2\in\on{rng}(\mu)$. 
Note that 
$$
x-x_2=\frac{2}{9}=\sum_{n=1}^\infty\frac{2}{10^n}\notin \{\sum_{n=1}^{\infty}\frac{\delta_n}{10^n} : (\delta_n)\in\{0,1,3,4\}^{\omega}\}=\on{rng}(\mu)\cap[0,x_2).
$$ 
This is a contradiction. Thus $E=\{1\}$ is a unique representation of $x_1$. 
}
\end{example}

Now we will focus on the more general situation when the set of uniqueness of a measure $\mu$ equals $\on{rng}(\mu)$ up to some finite set $F$. The following shows that for any $k$ is is possible to define $\mu$ such that $F$ has exactly $k$ points. Let us start from $k=2$. 
\begin{example}\label{bezdwoch}\emph{
Let  $\sum_{n=1}^{\infty}y_n$ be any absolutely convergent series with positive terms such that $f_y^{-1}(\{1\})= A(y_n)$. Let us define $x_1:=3\sum_{n=1}^{\infty}y_n$, $x_2:=\sum_{n=1}^{\infty}y_n$ and $x_{n+2}:=y_n$ for each $n\in\mathbb{N}$. Clearly $A(x_n)=A(y_n)\cup (A(y_n)+x_2)\cup(A(y_n)+x_1)\cup(A(y_n)+x_1+x_2)$. Moreover $f(x_2)=f(x_1+x_2)=2$  and $f^{-1}(\{1\})=A(x_n)\setminus\{x_1,x_1+x_2\}$.}
\end{example}

For arbitrary $k$ we have the following. 
\begin{proposition}\label{skonczeniewielepodwojnych}
Fix $k\in\mathbb{N}$. Then there exists a measure $\mu$ such that its set of uniqueness equals $\on{rng}(\mu)$ up to a set $F$ with $\vert F\vert =k$, and such that $f_{\mu}(t)=2$ for $t\in F$.
\end{proposition}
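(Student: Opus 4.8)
The plan is to realize $\mu$ as a concatenation of a finite sequence $(z_1,\dots,z_j)$ of ``large'' atoms and a quickly convergent tail $(y_n)$. Since the achievement set of a concatenation is the Minkowski sum, $\operatorname{rng}(\mu)=A(z_1,\dots,z_j)+A(y_n)$ will be a union of $2^j$ translated copies of the Cantor set $A(y_n)$; the idea is to choose the translation amounts so that exactly $k$ pairs of consecutive copies touch in a single point and all remaining copies are separated by genuine gaps, the $k$ touching points being precisely the values $t$ with $f_\mu(t)=2$. Concretely I would fix $y_n:=3^{-n}$, which is quickly convergent, so by Proposition \ref{szybkozbiezne} every element of $A(y_n)$ has a unique representation; put $S:=\sum_n 3^{-n}=\frac{1}{2}$, so that $A(y_n)$ is a Cantor set with $\min A(y_n)=0$ and $\max A(y_n)=S$.

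The combinatorial core is to build, for every $k\in\N$, a finite set $Z_k$ of distinct positive reals, each $\ge S$, with all $2^{|Z_k|}$ subset sums distinct and such that, writing $A(Z_k)=\{t_1<t_2<\dots<t_{2^{|Z_k|}}\}$, one has $t_{i+1}-t_i\ge S$ for every $i$, with exactly $k$ of these gaps equal to $S$. I would do this by induction: put $Z_1:=\{S\}$, so $A(Z_1)=\{0,S\}$ with one unit gap. Given $Z_k$ with $\sigma:=\sum Z_k$, append one new atom $w$; since $w>\sigma$ it is distinct from the old elements, all subset sums remain distinct, and $A(Z_k\cup\{w\})=A(Z_k)\cup\bigl(A(Z_k)+w\bigr)$ splits into two disjoint blocks, each carrying $k$ unit gaps, joined by a junction gap of length $w-\sigma$. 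Taking $w:=\sigma+S$ makes the junction a unit gap, producing $Z_{2k+1}$; taking $w:=\sigma+2S$ makes the junction a gap of length $2S>S$, producing $Z_{2k}$. As every integer $\ge 2$ is $2k$ or $2k+1$ for some $1\le k<$ it, strong induction yields $Z_k$ for all $k\ge 1$, and one checks $\min Z_k=S$ throughout.

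Now write $Z_k=\{z_1>z_2>\dots>z_j\}$ and let $\mu$ be the measure of the strictly decreasing, summable series $(z_1,\dots,z_j,y_1,y_2,\dots)$; this is legitimate because $z_j=\min Z_k=S>\frac{1}{3}=y_1$. For $k=1$ this is Example \ref{dwojkawsrodku} and for $k=2$ it is Example \ref{bezdwoch}. Any $E\subseteq\N$ splits as $E=E_z\sqcup E_y$ (indices of $z$-atoms, resp.\ $y$-atoms) with $\mu(E)=\bigl(\sum_{E_z}z\bigr)+\bigl(\sum_{E_y}y\bigr)$; since each element of $A(Z_k)$ has a unique $Z_k$-subset-sum and each element of $A(y_n)$ has a unique representation, the representations of a value $x$ are in bijection with the pairs $(t,a)\in A(Z_k)\times A(y_n)$ satisfying $t+a=x$. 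Hence $\operatorname{rng}(\mu)=\bigcup_{i=1}^{2^j}\bigl(A(y_n)+t_i\bigr)$ and $f_\mu(x)=\#\{i : x-t_i\in A(y_n)\}$.

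It then suffices to read $f_\mu$ off the geometry of the blocks $B_i:=A(y_n)+t_i\subseteq[t_i,t_i+S]$. Because every gap $t_{i+1}-t_i$ is $\ge S$, consecutive blocks $B_i,B_{i+1}$ can meet only in the single point $t_i+S$ (and do so exactly when $t_{i+1}-t_i=S$), while non-consecutive blocks are disjoint; so a point strictly between $t_i$ and $t_i+S$ lies in just the one block $B_i$ and is uniquely obtained, whereas an endpoint $t_i$ (resp.\ $t_i+S$) is obtained from two blocks iff the preceding (resp.\ following) gap equals $S$, and from one block otherwise. Consequently $f_\mu^{-1}(\{2\})=F:=\{t_i+S : t_{i+1}-t_i=S\}$, a set of exactly $k$ points, $f_\mu\equiv 1$ on $\operatorname{rng}(\mu)\setminus F$, and no value is attained three or more times. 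The main obstacle is the combinatorial lemma producing $Z_k$: spotting that appending a single large atom realizes the map $k\mapsto\{2k,2k+1\}$ on the number of unit gaps, which sweeps out all of $\N$. Once that is in hand, the computation of $f_\mu$ from the union of translated Cantor sets is routine, if a little fiddly at the endpoints.
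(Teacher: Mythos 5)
Your proof is correct and rests on the same key idea as the paper's: starting from $k=1$ and realizing the map $k\mapsto\{2k,2k+1\}$ by adjoining one large atom whose junction gap is either exactly the diameter of the existing set (creating one new touching point) or strictly larger (creating none), which by strong induction reaches every $k\in\N$. The only difference is organizational — you isolate the induction in a finite head $Z_k$ and glue on a fixed quickly convergent tail, whereas the paper iterates directly on infinite series — so this is essentially the paper's argument.
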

\begin{proof}
We will proceed inductively. For $k=1$ it is constructed in Example \ref{dwojkawsrodku} (or in Example \ref{nieskonczonedwojkawsrodku}). Assume that $\sum_{n=1}^{\infty}y_n$ is a series such that $A(y_n)=f^{-1}(\{1\})\cup f^{-1}(\{2\})$ and $\vert f^{-1}(\{2\})\vert=k$, where $f$ is a cardinal function for $\sum_{n=1}^{\infty}y_n$. We will show that it is possible to construct two series  $\sum_{n=1}^{\infty}x^1_n$ and $\sum_{n=1}^{\infty}x^2_n$ such that $A(x^i_n)=f_i^{-1}(\{1\})\cup f_i^{-1}(\{2\})$ and:
\begin{enumerate}
\item $\vert f_1^{-1}(\{2\})\vert=2k$;
\item $\vert f_2^{-1}(\{2\})\vert=2k+1$
\end{enumerate}
where $f_i$ is a cardinal function for $\sum_{n=1}^{\infty}x^i_n$.\\
(1). Let $x^1_1$ be any real number larger than $\sum_{n=1}^{\infty} y_n$, $x^1_{n+1}:=y_n$ for $n\in\mathbb{N}$. Then $A(x^1_n)=A(y_n)\cup (x^1_1+A(y_n))$ and we have $f_1(x)=f(x)$ for all $x\in A(y_n)$ and $f_1(x)=f(x-x_1)$ for each $x\in x_1+A(y_n)$. 
\\(2). Let $x^2_1:=\sum_{n=1}^{\infty} y_n$, $x^2_{n+1}:=y_n$ for $n\in\mathbb{N}$. Then $A(x^2_n)=A(y_n)\cup (x^2_1+A(y_n))$. We have $f_2(x)=f(x)$ for all $x\in A(y_n)\setminus\{x^2_1\}$, $f(x^2_1)=2$ and $f_2(x)=f(x-x^2_1)$ for each $x\in (x^2_1+A(y_n))\setminus\{x^2_1\}$. 
\end{proof}

The construction in the proof of Proposition \ref{skonczeniewielepodwojnych} non-unique points are obtained in exactly two ways. Now we will prove that this is not a special situation but general phenomena. 
\begin{proposition}\label{trojkanieskonczona}
Let $\mu$ be a measure such that its set of uniqueness equals $\on{rng}(\mu)$ up to a finite set $F$. Then any value from $F$ has exactly two representations. In other word, if there is a value $x\in \on{rng}(\mu)$ with $f_\mu(x)\geq 3$, then there are infinitely many non-unique values in $\on{rng}(\mu)$. 
\end{proposition}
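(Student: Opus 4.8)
The plan is to establish the equivalent ``in other words'' form: if there is $x\in\on{rng}(\mu)$ with $f_\mu(x)\geq 3$, then $\{t\in\on{rng}(\mu):f_\mu(t)\geq 2\}$ is infinite. Once this is done, the first assertion follows instantly: if the set of uniqueness is $\on{rng}(\mu)\setminus F$ with $F$ finite, then no value admits three or more representations, so every $t\in F$, being non-unique, has $f_\mu(t)=2$. So fix three distinct sets $A,B,C\subseteq\N$ with $\mu(A)=\mu(B)=\mu(C)=x$. The strategy is: from two of these sets extract a pair of \emph{disjoint}, unequal sets with equal measure whose union avoids infinitely many indices, and then translate that pair by every subset of those free indices to manufacture infinitely many non-unique values.

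First I would note the purely combinatorial fact that $(A\Delta B)\cap(A\Delta C)\cap(B\Delta C)=\emptyset$: if $n$ belonged to all three sets, then $n$ would lie in exactly one member of each of the pairs $\{A,B\}$, $\{A,C\}$, $\{B,C\}$, forcing the three truth values ``$n\in A$'', ``$n\in B$'', ``$n\in C$'' to be pairwise distinct, which is impossible. Since the intersection of three cofinite subsets of the infinite set $\N$ is nonempty, the three sets $A\Delta B$, $A\Delta C$, $B\Delta C$ cannot all be cofinite; say $M:=\N\setminus(A\Delta B)$ is infinite. Put $P:=A\setminus B$ and $Q:=B\setminus A$. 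Then $P$ and $Q$ are disjoint, both contained in $A\Delta B=\N\setminus M$, and $P\neq Q$ (two disjoint equal sets are empty, which would give $A=B$); moreover $\mu(P)=\mu(A)-\mu(A\cap B)=\mu(B)-\mu(A\cap B)=\mu(Q)$.

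Finally, for every $D\subseteq M$ the sets $D\cup P$ and $D\cup Q$ are distinct (as $P\neq Q$ and both are disjoint from $D$) and satisfy $\mu(D\cup P)=\mu(D)+\mu(P)=\mu(D)+\mu(Q)=\mu(D\cup Q)$, so $f_\mu(\mu(D)+\mu(P))\geq 2$. Enumerating $M=\{m_1<m_2<\cdots\}$ and using $x_n=\mu(\{n\})>0$, the numbers $\mu(\{m_1,\dots,m_k\})$, $k\in\N$, are strictly increasing, so $\{\mu(D):D\subseteq M\}$ is infinite, hence so is $\{\mu(D)+\mu(P):D\subseteq M\}$, a set of non-uniquely obtained values; this completes the argument. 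I do not anticipate a serious obstacle: the only place demanding care is ruling out that all three symmetric differences are cofinite --- which is exactly what the three-truth-values observation achieves --- together with the (immediate, from positivity of the atoms) fact that $\mu$ restricted to the infinite index set $M$ still has infinitely many distinct partial sums.
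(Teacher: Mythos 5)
Your proof is correct. It rests on the same combinatorial core as the paper's --- three distinct sets $A,B,C$ of equal measure cannot have all three pairwise symmetric differences cofinite --- but it is organized differently and produces the infinitely many non-unique values by a different mechanism. The paper argues by contradiction: assuming only finitely many non-unique values, it shows each of $A\cap B$, $A\cap C$, $B\cap C$ must be finite and each of $A\cup B$, $A\cup C$, $B\cup C$ cofinite (by perturbing the sets one element at a time, each perturbation yielding a new non-unique value), and then reaches a purely set-theoretic contradiction by decomposing $B\cup C$ into five finite pieces. You instead prove unconditionally, via the observation that $(A\Delta B)\cap(A\Delta C)\cap(B\Delta C)=\emptyset$, that some symmetric difference, say $A\Delta B$, has infinite complement $M$, and then manufacture infinitely many non-unique values in one stroke by translating the disjoint equal-measure pair $A\setminus B$, $B\setminus A$ by arbitrary finite subsets of $M$. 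Your version is direct rather than by contradiction, and arguably cleaner: the empty-triple-intersection observation replaces the paper's five-piece decomposition of $B\cup C$, and the single translation construction replaces the two separate single-element perturbation arguments; the paper's route, on the other hand, makes explicit the structural information (intersections finite, unions cofinite) that a counterexample would have to satisfy. Both arguments are complete and of comparable length.
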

\begin{proof}
Suppose that there exist three distinct sets $A,B,C\subset\N$ with $\mu(A)=\mu(B)=\mu(C)$. Note that $\mu(A\setminus\{k\})=\mu(B\setminus\{k\})$ and $A\setminus\{k\}\neq B\setminus\{k\}$ for $k\in A\cap B$. Therefore $A\cap B$ has to be a finite set. Similarly $A\cap C$, $C\cap B$ are finite. Further, $\mu(A\cup\{k\})=\mu(B\cup\{k\})$ and $A\cup\{k\}\neq B\cup\{k\}$ for $k\in \N\setminus(A\cap B)$. Therefore the sets $A\cup B$, $A\cup C$, $B\cup C$ have to be cofinite. Hence $C\setminus (A\cup B)$ is finite as a subset of finite set $\mathbb{N}\setminus (A\cup B)$; similarly $B\setminus (A\cup C)$ is finite. Thus 
$$
B\cup C=[C\setminus(A\cup B)]\cup[B\setminus(A\cup C)]\cup(B\cap C)\cup(B\cap A)\cup(C\cap A)
$$ 
is finite as a finite union of finite sets, which is a contradiction.
\end{proof}

Let us give a couple of examples, which shows how to construct a measure $\mu$ (or a series) with prescribed cardinal function $f_{\mu}$ or with a given range $R(f_\mu)$. The first example in the row shows that there is a measure $\mu$ (or series) which has exactly two points $t\neq v$ with $f_\mu(t)=f_\mu(v)=3$. 

\begin{example}\label{trojkadubleton}
\emph{
Let  $\sum_{n=1}^{\infty}y_n$ be a series such that each point of $A(y_n)$ has a unique representation. Let us define $x_1=x_2:=\sum_{n=1}^{\infty}y_n$ and $x_{n+2}:=y_n$ for each $n\in\mathbb{N}$. 
Clearly $A(x_n)=A(y_n)\cup (A(y_n)+x_1)\cup(A(y_n)+2x_1)$. We have $f^{-1}(\{1\})=(A(y_n)\cup(A(y_n)+2x_1))\setminus\{x_1,2x_1\}$, $f^{-1}(\{2\})=(A(y_n)+x_1)\setminus\{x_1,2x_1\}$ and $f^{-1}(\{3\})=\{x_1,2x_1\}$. 
}
\end{example}
The following illustration describe the cardinal function for Example \ref{trojkadubleton} with scaling $\sum_{n=1}^{\infty}y_n=1$ and $A=A(y_n)$.
\\\begin{tikzpicture}[>=stealth,thick] 
 \draw[-] (0,0)--(15,0);
 \draw (2.5,0)--(2.5,0) node[below]{$A$};
 \draw (7.5,0)--(7.5,0) node[below]{$1+A$};
 \draw (12.5,0)--(12.5,0) node[below]{$2+A$};
 \draw (0,0.1)--(0,-0.1) node[below]{$0$};
 \draw (5,0.1)--(5,-0.1) node[below]{$1$};
 \draw (10,0.1)--(10,-0.1) node[below]{$2$};
 \draw (15,0.1)--(15,-0.1) node[below]{$3$};
 \draw (16.3,0.2)--(16.3,0.2) node[below]{$A(x_n)$};
 \draw (2.5,0)--(2.5,0) node[above]{$\textcircled{1}$};
 \draw (7.5,0)--(7.5,0) node[above]{$\textcircled{2}$};
 \draw (12.5,0)--(12.5,0) node[above]{$\textcircled{1}$};
 \draw (0,0)--(0,0) node[above]{$\textcircled{1}$};
 \draw (5,0)--(5,0) node[above]{$\textcircled{3}$};
 \draw (10,0)--(10,0) node[above]{$\textcircled{3}$};
 \draw (15,0)--(15,0) node[above]{$\textcircled{1}$};
\end{tikzpicture}
The picture should be read as follows. Point of $A\cap[0,1)$ and $(2+A)\cap(2,3]$ have unique representations; points of $(1+A)\cap(1,2)$ have two representations, while points 1 and 2 have three representations. 

The following shows that it is possible to find $\mu$ such that $\mu(\N)/2$ has exactly 4 representation, and there are two points with 3 representations. 
\begin{example}\label{czworkasingleton}
\emph{
Let  $\sum_{n=1}^{\infty}y_n$ be a series such that each point of $A(y_n)$ has a unique representation. Let us define $x_1=x_2:=2\sum_{n=1}^{\infty}y_n$, $x_3:=\sum_{n=1}^{\infty}y_n$ and $x_{n+3}:=y_n$ for each $n\in\mathbb{N}$.
The cardinal function is illustrated below.
\\\begin{tikzpicture}[>=stealth,thick] 
 \draw[-] (0,0)--(15,0);
 \draw (1.2,0)--(1.2,0) node[below]{$A$};
 \draw (3.7,0)--(3.7,0) node[below]{$1+A$};
 \draw (6.2,0)--(6.2,0) node[below]{$2+A$};
 \draw (8.7,0)--(8.7,0) node[below]{$3+A$};
 \draw (11.2,0)--(11.2,0) node[below]{$4+A$};
 \draw (13.7,0)--(13.7,0) node[below]{$5+A$};
 \draw (0,0.1)--(0,-0.1) node[below]{$0$};
 \draw (2.5,0.1)--(2.5,-0.1) node[below]{$1$};
 \draw (5,0.1)--(5,-0.1) node[below]{$2$};
 \draw (7.5,0.1)--(7.5,-0.1) node[below]{$3$};
 \draw (10,0.1)--(10,-0.1) node[below]{$4$};
 \draw (12.5,0.1)--(12.5,-0.1) node[below]{$5$};
 \draw (15,0.1)--(15,-0.1) node[below]{$6$};
 \draw (16.3,0.2)--(16.3,0.2) node[below]{$A(x_n)$};
 \draw (0,0)--(0,0) node[above]{$\textcircled{1}$};
 \draw (1.2,0)--(1.2,0) node[above]{$\textcircled{1}$};
 \draw (2.5,0)--(2.5,0) node[above]{$\textcircled{2}$};
 \draw (3.7,0)--(3.7,0) node[above]{$\textcircled{1}$};
 \draw (5,0)--(5,0) node[above]{$\textcircled{3}$};
 \draw (6.2,0)--(6.2,0) node[above]{$\textcircled{2}$};
 \draw (7.5,0)--(7.5,0) node[above]{$\textcircled{4}$};
 \draw (8.7,0)--(8.7,0) node[above]{$\textcircled{2}$};
 \draw (10,0)--(10,0) node[above]{$\textcircled{3}$};
 \draw (11.2,0)--(11.2,0) node[above]{$\textcircled{1}$};
 \draw (12.5,0)--(12.5,0) node[above]{$\textcircled{2}$};
 \draw (13.7,0)--(13.7,0) node[above]{$\textcircled{1}$};
 \draw (15,0)--(15,0) node[above]{$\textcircled{1}$};
\end{tikzpicture}
 }
\end{example}

It is possible to have $R(f_\mu)=\{1,2,4\}$.
\begin{example}\label{czworkasingletonbeztrojki}
\emph{
Let  us consider the following series, a slight modification of Example \ref{czworkasingleton} -- $x_1=x_2:=3\sum_{n=1}^{\infty}y_n$, $x_3:=\sum_{n=1}^{\infty}y_n$ and $x_{n+3}:=y_n$. Its cardinal function is as follows.
\\\begin{tikzpicture}[>=stealth,thick] 
 \draw[-] (0,0)--(14.4,0);
 \draw (0.9,0)--(0.9,0) node[below]{$A$};
 \draw (2.7,0)--(2.7,0) node[below]{$1+A$};
 \draw (6.3,0)--(6.3,0) node[below]{$3+A$};
 \draw (8.1,0)--(8.1,0) node[below]{$4+A$};
 \draw (11.7,0)--(11.7,0) node[below]{$6+A$};
 \draw (13.5,0)--(13.5,0) node[below]{$7+A$};
 \draw (0,0.1)--(0,-0.1) node[below]{$0$};
 \draw (1.8,0.1)--(1.8,-0.1) node[below]{$1$};
 \draw (3.6,0.1)--(3.6,-0.1) node[below]{$2$};
 \draw (5.4,0.1)--(5.4,-0.1) node[below]{$3$};
 \draw (7.2,0.1)--(7.2,-0.1) node[below]{$4$};
 \draw (9,0.1)--(9,-0.1) node[below]{$5$};
 \draw (10.8,0.1)--(10.8,-0.1) node[below]{$6$};
 \draw (12.6,0.1)--(12.6,-0.1) node[below]{$7$};
 \draw (14.4,0.1)--(14.4,-0.1) node[below]{$8$};
 \draw (15.5,0.2)--(15.5,0.2) node[below]{$A(x_n)$};
 \draw (0,0)--(0,0) node[above]{$\textcircled{1}$};
 \draw (0.9,0)--(0.9,0) node[above]{$\textcircled{1}$};
 \draw (1.8,0)--(1.8,0) node[above]{$\textcircled{2}$};
 \draw (2.7,0)--(2.7,0) node[above]{$\textcircled{1}$};
 \draw (3.6,0)--(3.6,0) node[above]{$\textcircled{1}$};
 \draw (5.4,0)--(5.4,0) node[above]{$\textcircled{2}$};
 \draw (6.3,0)--(6.3,0) node[above]{$\textcircled{2}$};
 \draw (7.2,0)--(7.2,0) node[above]{$\textcircled{4}$};
 \draw (8.1,0)--(8.1,0) node[above]{$\textcircled{2}$};
 \draw (9,0)--(9,0) node[above]{$\textcircled{2}$};
 \draw (10.8,0)--(10.8,0) node[above]{$\textcircled{1}$};
 \draw (11.7,0)--(11.7,0) node[above]{$\textcircled{1}$};
 \draw (12.6,0)--(12.6,0) node[above]{$\textcircled{2}$};
 \draw (13.5,0)--(13.5,0) node[above]{$\textcircled{1}$};
 \draw (14.4,0)--(14.4,0) node[above]{$\textcircled{1}$};
\end{tikzpicture}
}
\end{example}

It is possible also to have $R(f_{\mu})=\{1,3\}$. 
\begin{example}\label{trojkabezdwojki}
\emph{
Put  $x_1=x_2=x_3:=2\sum_{n=1}^{\infty}y_n$ and $x_{n+3}:=y_n$ for each $n\in\mathbb{N}$.
Its cardinal function is described below.
\\\begin{tikzpicture}[>=stealth,thick] 
 \draw[-] (0,0)--(12.6,0);
 \draw (0.9,0)--(0.9,0) node[below]{$A$};
 \draw (4.5,0)--(4.5,0) node[below]{$2+A$};
 \draw (8.1,0)--(8.1,0) node[below]{$4+A$};
 \draw (11.7,0)--(11.7,0) node[below]{$6+A$};
 \draw (0,0.1)--(0,-0.1) node[below]{$0$};
 \draw (1.8,0.1)--(1.8,-0.1) node[below]{$1$};
 \draw (3.6,0.1)--(3.6,-0.1) node[below]{$2$};
 \draw (5.4,0.1)--(5.4,-0.1) node[below]{$3$};
 \draw (7.2,0.1)--(7.2,-0.1) node[below]{$4$};
 \draw (9,0.1)--(9,-0.1) node[below]{$5$};
 \draw (10.8,0.1)--(10.8,-0.1) node[below]{$6$};
 \draw (12.6,0.1)--(12.6,-0.1) node[below]{$7$};
 \draw (13.5,0.2)--(13.5,0.2) node[below]{$A(x_n)$};
 \draw (0,0)--(0,0) node[above]{$\textcircled{1}$};
 \draw (0.9,0)--(0.9,0) node[above]{$\textcircled{1}$};
 \draw (1.8,0)--(1.8,0) node[above]{$\textcircled{1}$};
 \draw (3.6,0)--(3.6,0) node[above]{$\textcircled{3}$};
 \draw (4.5,0)--(4.5,0) node[above]{$\textcircled{3}$};
 \draw (5.4,0)--(5.4,0) node[above]{$\textcircled{3}$};
 \draw (7.2,0)--(7.2,0) node[above]{$\textcircled{3}$};
 \draw (8.1,0)--(8.1,0) node[above]{$\textcircled{3}$};
 \draw (9,0)--(9,0) node[above]{$\textcircled{3}$};
 \draw (10.8,0)--(10.8,0) node[above]{$\textcircled{1}$};
 \draw (11.7,0)--(11.7,0) node[above]{$\textcircled{1}$};
 \draw (12.6,0)--(12.6,0) node[above]{$\textcircled{1}$};
\end{tikzpicture}
}
\end{example}

An it is possible to have $R(f_{\mu})=\{1,3,4\}$. 
\begin{example}\label{czworkabezdwojki}
\emph{
Define  $x_1:=4\sum_{n=1}^{\infty}y_n$, $x_2=x_3=x_4:=2\sum_{n=1}^{\infty}y_n$ and $x_{n+4}:=y_n$ for each $n\in\mathbb{N}$. We have:
\\\begin{tikzpicture}[>=stealth,thick] 
 \draw[-] (0,0)--(15.4,0);
 \draw (0.7,0)--(0.7,0) node[below]{$A$};
 \draw (3.5,0)--(3.5,0) node[below]{$2+A$};
 \draw (6.3,0)--(6.3,0) node[below]{$4+A$};
 \draw (9.1,0)--(9.1,0) node[below]{$6+A$};
 \draw (11.9,0)--(11.9,0) node[below]{$8+A$};
 \draw (14.7,0)--(14.7,0) node[below]{$10+A$};
 \draw (0,0.1)--(0,-0.1) node[below]{$0$};
 \draw (1.4,0.1)--(1.4,-0.1) node[below]{$1$};
 \draw (2.8,0.1)--(2.8,-0.1) node[below]{$2$};
 \draw (4.2,0.1)--(4.2,-0.1) node[below]{$3$};
 \draw (5.6,0.1)--(5.6,-0.1) node[below]{$4$};
 \draw (7,0.1)--(7,-0.1) node[below]{$5$};
 \draw (8.4,0.1)--(8.4,-0.1) node[below]{$6$};
 \draw (9.8,0.1)--(9.8,-0.1) node[below]{$7$};
 \draw (11.2,0.1)--(11.2,-0.1) node[below]{$8$};
 \draw (12.6,0.1)--(12.6,-0.1) node[below]{$9$};
 \draw (14,0.1)--(14,-0.1) node[below]{$10$};
 \draw (15.4,0.1)--(15.4,-0.1) node[below]{$11$};
 \draw (16.3,0.2)--(16.3,0.2) node[below]{$A(x_n)$};
 \draw (0,0)--(0,0) node[above]{$\textcircled{1}$};
 \draw (0.7,0)--(0.7,0) node[above]{$\textcircled{1}$};
 \draw (1.4,0)--(1.4,0) node[above]{$\textcircled{1}$};
 \draw (2.8,0)--(2.8,0) node[above]{$\textcircled{3}$};
 \draw (3.5,0)--(3.5,0) node[above]{$\textcircled{3}$};
 \draw (4.2,0)--(4.2,0) node[above]{$\textcircled{3}$};
 \draw (5.6,0)--(5.6,0) node[above]{$\textcircled{4}$};
 \draw (6.3,0)--(6.3,0) node[above]{$\textcircled{4}$};
 \draw (7,0)--(7,0) node[above]{$\textcircled{4}$};
 \draw (8.4,0)--(8.4,0) node[above]{$\textcircled{4}$};
 \draw (9.1,0)--(9.1,0) node[above]{$\textcircled{4}$};
 \draw (9.8,0)--(9.8,0) node[above]{$\textcircled{4}$};
 \draw (11.2,0)--(11.2,0) node[above]{$\textcircled{3}$};
 \draw (11.9,0)--(11.9,0) node[above]{$\textcircled{3}$};
 \draw (12.6,0)--(12.6,0) node[above]{$\textcircled{3}$};
 \draw (14,0)--(14,0) node[above]{$\textcircled{1}$};
 \draw (14.7,0)--(14.7,0) node[above]{$\textcircled{1}$};
 \draw (15.4,0)--(15.4,0) node[above]{$\textcircled{1}$};
\end{tikzpicture}
}
\end{example}

The next two propositions give methods to produce a measure $\mu$ with prescribed cardinal properties having other measure $\nu$ with given cardinal properties.

Let $\nu$ be a measure. Put $E:=R(f_\nu)$ and $F:=f_\nu[\on{rng}(\nu)\setminus\{0,\nu(\N)\}]$. Note that $F=E\setminus\{1\}$ if and only if the series $\sum_{n=1}^{\infty} x_n$ is plentiful where $x_n:=\nu(\{n\})$. As usual $tA:=\{ta:a\in A\}$ for any $t\in\R$ and $\subset\R$. 
\begin{proposition}\label{rozmnazaniefunkcjikardynalnych}
Let $\nu$ be a measure and $m\in\N$. There exist measures $\mu_i$ with cardinal functions $g_i$, $i=1,2,3,4$, such that
\begin{enumerate}
\item $R(g_1)=E\cup\{2\}$;
\item $R(g_2)=E\cup\{3\}\cup 2F$;
\item $R(g_3)= \bigcup_{k=0}^{m} {m\choose k}F\cup \{{m+1\choose k}:k=0,1,\dots,m+1\}$;
\item $R(g_4)= \bigcup_{k=0}^{m} {m\choose k}E$.
\end{enumerate}
\end{proposition}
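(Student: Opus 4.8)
The plan is to realize each $\mu_i$ by \emph{prepending finitely many equal atoms} to $\nu$, as in Examples \ref{dwojkawsrodku}, \ref{trojkadubleton} and \ref{czworkasingleton}, and then to read off the cardinal function slab by slab. Throughout write $x_n:=\nu(\{n\})$, $s:=\nu(\N)=\sum_n x_n$ and $f:=f_\nu$. I first record the elementary fact that, since $f(0)=f(s)=1$ by Proposition \ref{podstawy}(i), one has $E=R(f)=\{1\}\cup f[\on{rng}(\nu)\cap(0,s)]=\{1\}\cup F$; so in the four target formulas $\{1\}$ may be freely added or absorbed.

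The general construction is this: fix $p\in\N$ and a constant $c\ge s$, and let $\mu_{p,c}$ be the measure whose atoms are $c$ repeated $p$ times followed by $x_1,x_2,\dots$ (admissible because $c\ge s\ge x_1$). Then $\on{rng}(\mu_{p,c})=\bigcup_{j=0}^{p}(jc+\on{rng}(\nu))$, and any subset of the index set of $\mu_{p,c}$ summing to a value $t$ is described by the number $i\in\{0,\dots,p\}$ of big atoms it uses, contributing $ic$, together with a subset of the trailing copy of $\nu$ summing to the residual $v:=t-ic\in\on{rng}(\nu)\subseteq[0,s]$. Hence $g_{p,c}(t)=\sum_i\binom{p}{i}f(t-ic)$, summed over those $i$ with $t-ic\in\on{rng}(\nu)$. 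The point is that very few values of $i$ can occur: from $ic\le t\le ic+s$ one gets $i\le t/c\le i+1$, so $i=\lfloor t/c\rfloor$ unless $t/c$ is an integer, in which case $i\in\{t/c-1,t/c\}$. I would then split according to whether $c=s$ or $c>s$. When $c>s$ the slabs $jc+\on{rng}(\nu)$ are pairwise disjoint, each $t$ in the $k$-th slab has the unique admissible $i=k$, and as $t$ runs over that slab $g_{p,c}(t)=\binom{p}{k}f(t-kc)$ runs over $\binom{p}{k}R(f)=\binom{p}{k}E$; thus $R(g_{p,c})=\bigcup_{k=0}^{p}\binom{p}{k}E$. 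When $c=s$ the slabs touch at the multiples of $s$: for $t$ strictly between $ks$ and $(k+1)s$ the unique admissible $i$ is $k$, giving $g_{p,s}(t)=\binom{p}{k}f(t-ks)$ with $t-ks\in\on{rng}(\nu)\cap(0,s)$, so these interior values fill out $\binom{p}{k}F$; at a junction $t=ks$ with $1\le k\le p$ the admissible $i$ are $k-1$ and $k$, with residual $s$ resp.\ $0$, whence $g_{p,s}(ks)=\binom{p}{k-1}f(s)+\binom{p}{k}f(0)=\binom{p}{k-1}+\binom{p}{k}=\binom{p+1}{k}$; and the two global endpoints give $g_{p,s}(0)=1=\binom{p+1}{0}$ and $g_{p,s}((p+1)s)=1=\binom{p+1}{p+1}$. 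Altogether $R(g_{p,s})=\bigcup_{k=0}^{p}\binom{p}{k}F\cup\{\binom{p+1}{k}:k=0,\dots,p+1\}$.

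The four assertions now follow by choosing $p$ and $c$. Taking $\mu_3:=\mu_{m,s}$ gives $(3)$ verbatim. Specializing the $c=s$ computation to $p=1$ yields $R(g)=F\cup\{1,2\}=E\cup\{2\}$, so $\mu_1:=\mu_{1,s}$ (the measure of Example \ref{dwojkawsrodku}) proves $(1)$; specializing to $p=2$ yields $R(g)=(F\cup 2F)\cup\{1,3\}=E\cup\{3\}\cup 2F$, so $\mu_2:=\mu_{2,s}$ proves $(2)$. Finally, with $p=m$ and any $c>s$, say $c=2s$, the disjoint-slab computation gives $R(g_4)=\bigcup_{k=0}^{m}\binom{m}{k}E$, proving $(4)$ with $\mu_4:=\mu_{m,2s}$.

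The only genuinely delicate step, and the one I would write out carefully, is the seam analysis when $c=s$: one must check that a multiple $ks$ of $s$ is reached only from the two neighbouring slabs (true because $\on{rng}(\nu)\subseteq[0,s]$ forces $jc+\on{rng}(\nu)$ and $j'c+\on{rng}(\nu)$ to meet only when $|j-j'|\le1$, and then only in the single point $(j\vee j')s$), and that the residual there is exactly $0$ or $s$, where $f$ equals $1$ — this is precisely what turns a pair of binomials into $\binom{p+1}{k}$ by Pascal's rule and what makes each interior slab contribute $\binom{p}{k}F$ rather than $\binom{p}{k}E$. The contrast $c=s$ versus $c>s$ is exactly the device separating $(3)$ (touching slabs: interiors give $F$, seams add the Pascal layer $\binom{m+1}{k}$) from $(4)$ (disjoint slabs: every slab, endpoints included, gives $E$).
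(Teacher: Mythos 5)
Your proposal is correct and is essentially the paper's own proof: the paper realizes $\mu_1,\mu_2,\mu_3$ by prepending $1$, $2$, resp.\ $m$ atoms equal to $\nu(\N)$ (touching slabs) and $\mu_4$ by prepending $m$ atoms equal to $2\nu(\N)$ (disjoint slabs), exactly your $\mu_{1,s},\mu_{2,s},\mu_{m,s},\mu_{m,2s}$. Your unified formula $g_{p,c}(t)=\sum_i\binom{p}{i}f(t-ic)$ together with the explicit seam analysis is just a cleaner packaging of the paper's four case-by-case computations (and it quietly corrects a harmless slip in the paper's displayed formula for $g_2$ on the top slab, where the coefficient should be $\binom{2}{2}=1$ rather than $2$).
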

\begin{proof} Let $x_n$ be the value of $n$-th $\nu$-atom. \\
(1) The construction is the same as in Example \ref{dwojkawsrodku} -- now the measure $\nu$ is arbitrary. Define $y_1:=\sum_{n=1}^{\infty} x_n$ and $y_{n+1}:=x_n$ for each natural number $n$. Then $\on{rng}(\mu_1)=A(y_n)=A(x_n)\cup(\sum_{n=1}^{\infty} x_n+A(x_n))$ and we have:
\begin{displaymath}
g_1(x) = \left\{ \begin{array}{ll} f(x), & \textrm{$x\in A(x_n)\setminus\{\sum_{n=1}^{\infty} x_n\}$}\\ f(x-\sum_{n=1}^{\infty} x_n), & \textrm{$x\in (\sum_{n=1}^{\infty} x_n+A(x_n))\setminus\{\sum_{n=1}^{\infty} x_n\}$}\\ 2, & \textrm{$x=\sum_{n=1}^{\infty} x_n$} \end{array} \right.
\end{displaymath}
(2) Here we have the same construction as in Example \ref{trojkadubleton}. Define $y_1=y_2:=\sum_{n=1}^{\infty} x_n$ and $y_{n+2}:=x_n$ for all $n\in\mathbb{N}$. Thus $\on{rng}(\mu_2)=A(y_n)=A(x_n)\cup(\sum_{n=1}^{\infty} x_n+A(x_n))\cup (2\sum_{n=1}^{\infty} x_n+A(x_n))$. The cardinal function is as follows:
\begin{displaymath}
g_2(x) = \left\{ \begin{array}{ll} f(x), & \textrm{$x\in A(x_n)\setminus\{\sum_{n=1}^{\infty} x_n\}$}\\ 2f(x-\sum_{n=1}^{\infty} x_n), & \textrm{$x\in (\sum_{n=1}^{\infty} x_n+A(x_n))\setminus\{\sum_{n=1}^{\infty} x_n,2\sum_{n=1}^{\infty} x_n\}$}\\ 2f(x-2\sum_{n=1}^{\infty} x_n), & \textrm{$x\in (2\sum_{n=1}^{\infty} x_n+A(x_n))\setminus\{2\sum_{n=1}^{\infty} x_n\}$} \\ 3, & \textrm{$x\in\{\sum_{n=1}^{\infty} x_n,2\sum_{n=1}^{\infty} x_n\}$}\end{array} \right.
\end{displaymath}
(3)   Define $y_n:=\sum_{n=1}^{\infty} x_n$ for all $n\leq m$ and $y_{n}:=x_{n-m}$ for all $n>m$. Then $\on{rng}(\mu_3)=A(y_n)= \bigcup_{k=0}^{m} k\sum_{n=1}^{\infty} x_n+A(x_n)$. We have $g_3(k\sum_{n=1}^{\infty} x_n)={m+1\choose k}$ for every  $k\in\{0,1,\ldots,m+1\}$ and $g_3(k\sum_{n=1}^{\infty} x_n+x)={m\choose k} f(x)$ for all $x\in A(x_n)\setminus\{0,\sum_{n=1}^{\infty} x_n\}$ and $k\in\{0,1,\ldots,m\}$. Hence $R(g_3)= \bigcup_{k=0}^{m} {m\choose k}F\cup\{{m+1\choose k}:k=0,1,\dots,m+1\}$.
\\(4) Fix $m\in\mathbb{N}$. Define $y_n:=2\sum_{n=1}^{\infty} x_n$ for all $n\leq m$ and $y_{n}:=x_{n-m}$ for all $n>m$.
Then $\on{rng}(\mu_4)=A(y_n)= \bigcup_{k=0}^{m} \left(2k\sum_{n=1}^{\infty} x_n+A(x_n)\right)$. We have $g_3(2k\sum_{n=1}^{\infty} x_n+x)={m\choose k} f(x)$ for all $x\in A(x_n)$ and $k\in\{0,1,\ldots,m\}$. Hence $R(g_3)= \bigcup_{k=0}^{m} {m\choose k}E$.
\end{proof}

Let us recall one basic fact about arithmetic of cardinal numbers. If $\kappa$ is infinite cardinal number and $k\in\N$, then $k\cdot \kappa=\kappa\cdot k=\kappa$. If $\kappa$ is a natural number, then $\kappa\cdot k$ is a usual product of two natural numbers. Therefore $k\cdot\omega=\omega$ and $k\cdot\mathfrak{c}=\mathfrak{c}$. 
\begin{proposition} \label{szeregiskonczone}
Let $\tau$ be a measure on $[1,j]$, and let $\nu$ be a measure on $\N$. Then there is a measure $\mu$ such that $R(f_\mu)=R(f_{\tau})\cdot R(f_\nu)=\{k\cdot\kappa:k\in\on{rng}(\tau),\kappa\in\on{rng}(\nu)\}$ holds.
\end{proposition}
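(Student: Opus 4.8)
The plan is to realise $\mu$ by concatenating a heavily rescaled copy of the finite measure $\tau$ with the measure $\nu$, the rescaling being chosen so large that the $\tau$-part and the $\nu$-part of any subset sum cannot interfere. Write $a_1\ge\dots\ge a_j>0$ for the atoms of $\tau$, write $x_1\ge x_2\ge\dots>0$ for the atoms of $\nu$, and put $S:=\nu(\N)=\sum_{n} x_n$. Since $\mathcal{P}([1,j])$ is finite, the quantity $\delta:=\min\{\,|\tau(C)-\tau(D)|:C,D\subset[1,j],\ \tau(C)\neq\tau(D)\,\}$ is a well-defined positive real. Fix any $M>0$ with $Ma_j\ge x_1$ and $M\delta>S$, and define a measure $\mu$ on $\N$ by $\mu(\{i\}):=Ma_i$ for $i\le j$ and $\mu(\{j+n\}):=x_n$ for $n\ge 1$; the two conditions on $M$ guarantee $\mu(\{k\})\ge\mu(\{k+1\})>0$, so $\mu$ is a measure of the required type.

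For $A\subset\N$ set $A_1:=A\cap[1,j]$ and $A_2:=\{n\in\N:j+n\in A\}$, so that $\mu(A)=M\tau(A_1)+\nu(A_2)$. The first step I would carry out is the separation claim: for $A,B\subset\N$ one has $\mu(A)=\mu(B)$ if and only if $\tau(A_1)=\tau(B_1)$ and $\nu(A_2)=\nu(B_2)$. One implication is trivial; for the other, $\mu(A)=\mu(B)$ gives $M\,|\tau(A_1)-\tau(B_1)|=|\nu(B_2)-\nu(A_2)|\le S<M\delta$, and since $|\tau(A_1)-\tau(B_1)|$ is either $0$ or at least $\delta$, it must be $0$, whence $\nu(A_2)=\nu(B_2)$ as well. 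Consequently every $t\in\on{rng}(\mu)$ can be written uniquely as $t=Ms+u$ with $s\in\on{rng}(\tau)$ and $u\in\on{rng}(\nu)$ (existence: take $s=\tau(A_1)$, $u=\nu(A_2)$ for any $A$ with $\mu(A)=t$; uniqueness: the separation claim), and conversely, for any $s\in\on{rng}(\tau)$ and $u\in\on{rng}(\nu)$, choosing $C\in\tau^{-1}(s)$ and $D\in\nu^{-1}(u)$ yields $\mu\big(C\cup(j+D)\big)=Ms+u\in\on{rng}(\mu)$.

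It then remains to count fibres. If $t\in\on{rng}(\mu)$ corresponds to the pair $(s,u)$, the separation claim gives
\[
\mu^{-1}(t)=\big\{\,C\cup(j+D):C\in\tau^{-1}(s),\ D\in\nu^{-1}(u)\,\big\},
\]
and since $(C,D)\mapsto C\cup(j+D)$ is injective (one recovers $C$ by intersecting with $[1,j]$ and $D$ by shifting back the rest), we get $f_\mu(t)=|\mu^{-1}(t)|=|\tau^{-1}(s)|\cdot|\nu^{-1}(u)|=f_\tau(s)\cdot f_\nu(u)$. Here $f_\tau(s)$ is a natural number, because $\tau^{-1}(s)\subset\mathcal{P}([1,j])$, so the cardinal product on the right is governed by the arithmetic recalled just before the statement ($k\cdot\omega=\omega$ and $k\cdot\mathfrak{c}=\mathfrak{c}$) and equals the intended product. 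Letting $t$ run over $\on{rng}(\mu)$ and using that the pairs $(s,u)$ run over all of $\on{rng}(\tau)\times\on{rng}(\nu)$, we conclude
\[
R(f_\mu)=\{\,f_\tau(s)\cdot f_\nu(u):s\in\on{rng}(\tau),\ u\in\on{rng}(\nu)\,\}=\{\,k\cdot\kappa:k\in R(f_\tau),\ \kappa\in R(f_\nu)\,\}=R(f_\tau)\cdot R(f_\nu).
\]

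I do not anticipate a genuine obstacle here: the content is the separation estimate, i.e.\ calibrating $M$ against $\delta$ and $S$, together with the bookkeeping that turns a product of fibres into a product of ranges. The one point that deserves explicit care is the cardinal arithmetic in the final display — it is precisely in order to make $f_\tau(s)\cdot f_\nu(u)$ well-behaved when $f_\nu(u)\in\{\omega,\mathfrak{c}\}$ that one uses $f_\tau(s)\in\N$ together with $k\cdot\omega=\omega$ and $k\cdot\mathfrak{c}=\mathfrak{c}$.
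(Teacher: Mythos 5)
Your proposal is correct and follows essentially the same route as the paper: the paper rescales $\nu$ downward so that $\nu(\N)$ is smaller than the minimal gap between distinct values of $\tau$, whereas you rescale $\tau$ upward by $M$ with $M\delta>S$ — the same separation device up to a global scaling — and then both arguments concatenate the two atom lists and read off $f_\mu(Ms+u)=f_\tau(s)\cdot f_\nu(u)$. Your write-up is in fact somewhat more explicit than the paper's about the monotonicity of the atoms and the bijection between fibres, but there is no substantive difference in method.
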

\begin{proof} Let $k_i:=\tau(\{i\})$ for $i\in[1,j]$ and $y_n:=\nu(\{n\})$ for $n\in\N$. Let $\on{rng}(\tau):=\{\sigma_1,\ldots,\sigma_m\}$. If $(a\nu)(E):=a\cdot\nu(E)$, then $R(f_{a\nu})=R(f_\nu)$. Hence we may assume that $\sum_{n=1}^{\infty}y_n<\min\{\sigma_i-\sigma_l : \ i,l\in\{1,\ldots,m\}, \  i\neq l\}$. Let us define $x_i:=k_i$ for $i\in[1,j]$ and $x_{j+n}:=y_n$ for all $n\in\mathbb{N}$. Thus $\on{rng}(\mu)=\on{rng}(\tau)+\on{rng}(\nu)=\bigcup_{p=1}^{m} (\sigma_p+\on{rng}(\nu))$ and $(\sigma_i+\on{rng}(\nu))\cap (\sigma_l+\on{rng}(\nu))=\emptyset$ for all $i,l\in\{1,\ldots,m\}$, $i\neq l$. As a result we have $f(\sigma_p+y)=f_{\sigma}(\sigma_p)\cdot f_y(y)$  for all $p\in\{1,\ldots,m\}$, $y\in \on{rng}(\nu)$. Hence $R(f_\mu)=R(f_{\tau})\cdot R(f_\nu)$.
\end{proof}

Note that to produce a measure $\mu$ on $\N$ with given range $R=R(f_\mu)\subset\N$, it is enough to find a measure $\tau$ on some finite set $[1,n]$ with $R(f_\tau)=R$. To see it take a measure $\nu$ on $\N$ such that any value of $\on{rng}(\nu)$ has a unique representation. Then by Proposition \ref{rozmnazaniefunkcjikardynalnych} we find $\mu$ with $R(f_\mu)=R(f_\tau)\cdot 1=R$. On the other hand if $\nu$ is such that $R(f_\nu)=\{1,\mathfrak{c}\}$, see Example \ref{skrajne}, then there is $\mu$ with $R(f_\mu)=R(f_\tau)\cdot \{1,\mathfrak{c}\}=R\cup\{\mathfrak{c}\}$.  

The following Table summarize our searching of measures $\mu$ with $R(f_\mu)\subset[1,6]$. Some of them can be found by Proposition \ref{rozmnazaniefunkcjikardynalnych}, while for the others we need to produce new examples. 

\begin{center}
\begin{tabular}{c|c}
Range $R(f_\mu)$ & method\\ \hline
1,2 & Prop. \ref{rozmnazaniefunkcjikardynalnych} (1)  \\ 
1,2,3 & Prop. \ref{rozmnazaniefunkcjikardynalnych} (2)  \\ 
1,3 & Prop. \ref{rozmnazaniefunkcjikardynalnych} (4) for $m=3$  \\ 
1,2,3,4 & Prop. \ref{rozmnazaniefunkcjikardynalnych} (2) for $\nu$ with $R(f_\nu)=\{1,2\}$  \\ 
1,2,4 & Prop. \ref{rozmnazaniefunkcjikardynalnych} (4) for  $m=2$ and $\nu$ with $R(f_\nu)=\{1,2\}$  \\ 
1,3,4 & Example \ref{czworkabezdwojki} \\
1,2,3,5 & Example \ref{jedendwatrzypiec}\\
1,2,4,5 & Example \ref{jedendwaczterypiec}\\
1,4,6 & Prop. \ref{rozmnazaniefunkcjikardynalnych} (4) for  $m=4$\\ 
1,3,4,6 & Prop. \ref{rozmnazaniefunkcjikardynalnych} (3) for  $m=3$\\
1,4,5,6 & Example \ref{jedenczterypiecszesc}\\
1,3,4,5,6 & Example \ref{jedentrzyczterypiecszesc}\\ 
1,3,6 & Example \ref{jedentrzyszesc}\\
\hline
\end{tabular}
\end{center}

Examples \ref{jedentrzyszesc}-\ref{jedentrzypiecsiedem} are constructed in a similar way that in Proposition \ref{szeregiskonczone}. We assume that  $\sum_{n=1}^{\infty}y_n$ is a series such that any its point has unique representation, and $f$ is the cardinal function for the final series $\sum_{n=1}^{\infty}x_n$.

\begin{example}\label{jedentrzyszesc}
Let $x_1=x_2:=6\sum_{n=1}^{\infty}y_n$, $x_3=x_4=x_5:=2\sum_{n=1}^{\infty}y_n$ and $x_{n+5}:=y_n$ for each $n\in\mathbb{N}$. Then $R(f)=\{1,3,6\}$. 
\end{example}

\begin{example}\label{jedenczterypiecszesc}
Let $x_1:=12\sum_{n=1}^{\infty}y_n$, $x_2:=6\sum_{n=1}^{\infty}y_n$, $x_3=x_4=x_5=x_6:=2\sum_{n=1}^{\infty}y_n$ and $x_{n+6}:=y_n$ for each $n\in\mathbb{N}$. Then $R(f)=\{1,4,5,6\}$. 
\end{example}

\begin{example}\label{jedentrzyczterypiecszesc}
Let  $x_1:=6\sum_{n=1}^{\infty}y_n$, $x_2:=4\sum_{n=1}^{\infty}y_n$,$x_3=x_4=x_5:=2\sum_{n=1}^{\infty}y_n$ and $x_{n+5}:=y_n$ for each $n\in\mathbb{N}$. Then $R(f)=\{1,3,4,5,6\}$. 
\end{example}

\begin{example}\label{jedendwatrzypiec}
Let $x_1:=6\sum_{n=1}^{\infty}y_n$, $x_2=x_3:=4\sum_{n=1}^{\infty}y_n$, $x_4=x_5:=2\sum_{n=1}^{\infty}y_n$ and $x_{n+5}:=y_n$ for each $n\in\mathbb{N}$.  Then $R(f)=\{1,2,3,5\}$. 
\end{example}

\begin{example}\label{jedendwaczterypiec}
Let $x_1:=10\sum_{n=1}^{\infty}y_n$, $x_2=x_3:=6\sum_{n=1}^{\infty}y_n$, $x_4=x_5:=4\sum_{n=1}^{\infty}y_n$ and $x_{n+5}:=y_n$ for each $n\in\mathbb{N}$.  Then $R(f)=\{1,2,4,5\}$. 
\end{example}

\begin{example}\label{jedentrzypiecsiedem}
Let $x_1=x_2:=4\sum_{n=1}^{\infty}y_n$, $x_3=x_4=x_5:=2\sum_{n=1}^{\infty}y_n$ and $x_{n+5}:=y_n$ for each $n\in\mathbb{N}$.  Then $R(f)=\{1,3,5,7\}$. 
\end{example}

The above table does not contain some subsets of $[1,6]$ (clearly each range has to contain 1).  For example we were not able to produce a measure $\mu$ with $R(f_\mu)=\{1,4\}$. We have found that there is no measure $\nu$ defined on a finite set with $R(f_\nu)=\{1,4\}$, but we do not know whether there is a measure $\mu$ on $\N$ with $R(f_\mu)=\{1,4\}$. On the other hand we found that if such measure exists, then the mapping $n\mapsto\mu(\{n\})$ has to be one-to-one. 

\begin{proposition}\label{jedenczterynoninjective}
Suppose that $\mu$ is a measure with $R(f_\mu)=\{1,4\}$. Then $\mu(\{n\})\neq\mu(\{m\})$ for distinct $n,m\in\N$
\end{proposition}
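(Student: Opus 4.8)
The plan is to argue by contradiction: if the atom map fails to be injective, I will produce a value of $\mu$ that must have at least five preimages, which is impossible when $R(f_\mu)=\{1,4\}$.

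First I would reduce to a clean failure of injectivity. Because $\mu(\{k\})\ge\mu(\{k+1\})$ for all $k$, if $\mu(\{n\})=\mu(\{m\})$ with $n<m$ then $\mu(\{n\})=\mu(\{n+1\})=\dots=\mu(\{m\})$; in particular two \emph{consecutive} atoms have the same value, say $x_n=x_{n+1}=:a>0$. The heart of the proof is a decomposition of $f_\mu$ along the pair $\{n,n+1\}$. Let $g$ be the cardinal function of the series obtained from $(x_k)$ by deleting the terms $x_n$ and $x_{n+1}$; concretely $g(s)=\vert\{A\subseteq\N\setminus\{n,n+1\}:\mu(A)=s\}\vert$, so that $g(0)=1$ and $g(s)=0$ for $s<0$. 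Classifying each $A$ with $\mu(A)=t$ according to whether $A\cap\{n,n+1\}$ is $\emptyset$, $\{n\}$, $\{n+1\}$ or $\{n,n+1\}$, I would obtain the identity
\[
f_\mu(t)=g(t)+2\,g(t-a)+g(t-2a)\qquad\text{for all }t\in\R .
\]

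Then I would read this identity at two points. At $t=a$ it becomes $f_\mu(a)=g(a)+2g(0)+g(-a)=g(a)+2$; since $g(a)\ge 0$ and $f_\mu(a)\in R(f_\mu)=\{1,4\}$, this forces $f_\mu(a)=4$ and $g(a)=2$ (consistent with the two obvious representations $\{n\}$ and $\{n+1\}$ of $a$). Feeding $g(a)=2$ into the identity at $t=2a$ gives
\[
f_\mu(2a)=g(2a)+2g(a)+g(0)=g(2a)+5\ge 5 ,
\]
whereas $2a=\mu(\{n,n+1\})\in\on{rng}(\mu)$ forces $f_\mu(2a)\in\{1,4\}$ --- a contradiction. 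Hence distinct atoms of $\mu$ have distinct values.

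The only point requiring care is the bookkeeping in the displayed identity: one has to check that the case $A\cap\{n,n+1\}=\emptyset$ contributes $g(t)$, each of the two cases in which exactly one of $n,n+1$ lies in $A$ contributes $g(t-a)$ (removing a single such index, whose atom has value $a$, decreases the remaining mass by $a$), and the case $A\supseteq\{n,n+1\}$ contributes $g(t-2a)$ --- all immediate from additivity of $\mu$, together with the conventions $g(0)=1$, $g(s)=0$ for $s<0$. I do not anticipate any real obstacle; once the pair $\{n,n+1\}$ is isolated, the argument is a two-line count.
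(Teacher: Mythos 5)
Your proof is correct and is in substance the same as the paper's: the paper also uses $f_\mu(x_n)=4$ to extract two further representations $A,B$ of $x_n=x_m$ (necessarily disjoint from $\{n,m\}$) and then exhibits five distinct sets of measure $x_n+x_m$, namely $\{n,m\}$ and $A\cup\{n\},A\cup\{m\},B\cup\{n\},B\cup\{m\}$, contradicting $R(f_\mu)=\{1,4\}$. Your convolution identity $f_\mu(t)=g(t)+2g(t-a)+g(t-2a)$ is just a systematic repackaging of that same count, with $g(a)=2$ encoding the sets $A,B$ and $g(2a)+2g(a)+g(0)\geq 5$ encoding the five sets.
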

\begin{proof}Let $x_n:=\mu(\{n\})$. 
Assume that $x_k=x_m$. Since $f(x_k)\geq 2$, we have $f(x_k)=4$, that is $x_k=x_m=\mu(A)=\mu(B)$, where $A\neq B$, $\{k\}\neq A\neq \{m\}$ and $\{k\}\neq B\neq \{m\}$. Hence $x_k+x_m=\mu( A\cup\{k\})=\mu( A\cup\{m\})=\mu( B\cup\{k\})=\mu( B\cup\{m\})$, so $f(x_k+x_m)\geq 5$, which gives contradiction.
\end{proof}

\end{document}